\newtheorem{theorem}{Theorem}[section]
\newtheorem{corollary}[theorem]{Corollary}
\newtheorem{lemma}[theorem]{Lemma}
\newtheorem{proposition}[theorem]{Proposition}
\newtheorem{remark}[theorem]{Remark}
\newtheorem{hypothesis}[theorem]{Hypothesis}
\newtheorem{definition}[theorem]{Definition}
\theoremstyle{definition}
\DeclareMathOperator{\dist}{dist}
\DeclareMathOperator{\dv}{div}
\DeclareMathOperator\Tr{Tr}
\DeclareMathOperator\inter{int}
\def\N{\mathbb{N}}
\def\Z{\mathbb{Z}}
\def\R{\mathbb{R}}
\let\O=\Omega
\let\e=\varepsilon
\def\epsilon{\varepsilon}
\let\vp=\varphi
\let\t=\tilde
\let\ol=\overline
\let\ul=\underline
\let\mc=\mathcal
\def\1{\mathbbm{1}}
\def\Sph{{\mathbb{S}}^{N-1}}
\def\hat{\widehat}
\def\tilde{\widetilde}
\def\seq#1{(#1_n)_{n\in\N}}
\def\as{\quad\text{as }\;}
\newenvironment{formula}[1]{\begin{equation}\label{#1}}{\end{equation}\noindent}
\numberwithin{equation}{section}
\newcommand{\be}{\begin{equation}}
\newcommand{\ee}{\end{equation}}
\newcommand{\baa}{\begin{array}}
\newcommand{\eaa}{\end{array}}
\newcommand{\ba}{\begin{eqnarray}}
\newcommand{\ea}{\end{eqnarray}}
\def\Fi#1{\begin{formula}{#1}}
\def\Ff{\end{formula}\noindent}
\def\BS{\color{Bittersweet}}
\def\PTF{pulsating traveling front}
\def\ais{asymptotic invasion shape}
\def\W{\mc{W}}
\def\V{\mc{V}}
\begin{document}
\date{}
\title{\bf{Reaction-diffusion equations in periodic media: convergence to pulsating fronts\ \thanks{This work has received funding from Excellence Initiative of Aix-Marseille Universit\'e~-~A*MIDEX, a French ``Investissements d'Avenir'' programme, from the French ANR ReaCh (ANR-23-CE40-0023-02) project, and from the European Union -- Next Generation EU, PRIN project 2022W58BJ5 ``PDEs and optimal control methods in mean field games, population dynamics and multi-agent models''. The second author is grateful to the hospitality of Universit\`a degli Studi di Roma La Sapienza, where part of this work was done. The first author is supported by the fundamental research funds for the central universities and the National Natural Science Foundation of China (No. 12471201).}}}
\author{}
\author{Hongjun Guo$^{\hbox{\small{ a}}}$, Fran\c cois Hamel$^{\hbox{\small{ b}}}$ and Luca Rossi$^{\hbox{\small{ c}}}$\\
\\
\footnotesize{$^{\hbox{a }}$School of Mathematical Sciences, Key Laboratory of Intelligent Computing and Applications }\\
\footnotesize{(Ministry of Education), Institute for Advanced Study, Tongji University, Shanghai, China}\\
\footnotesize{$^{\hbox{b }}$Aix Marseille Univ, CNRS, I2M, Marseille, France}\\
\footnotesize{$^{\hbox{c }}$SAPIENZA Univ Roma, Istituto ``G.~Castelnuovo'', Roma, Italy}}
\maketitle
	
\begin{abstract}
\noindent{This paper is concerned with reaction-diffusion-advection equations in spatially periodic media. Under an assumption of weak stability of the constant states~$0$ and $1$, and of existence of pulsating traveling fronts connecting them, we show that fronts' profiles appear, along sequences of times and points, in the large-time dynamics of the solutions of the Cauchy problem, whether their initial supports are bounded or unbounded. The types of equations that fit into our assumptions are the combustion and the bistable ones. We also show a generalized Freidlin-G\"artner formula and other geometrical properties of the asymptotic invasion shapes, or spreading sets, of invading solutions, and we relate these sets to the upper level sets of the solutions.}
\vskip 4pt
\noindent{\small{\it{Keywords}}: Reaction-diffusion equations; pulsating traveling fronts; large-time dynamics.}
\vskip 4pt
\noindent{\small{\it{Mathematics Subject Classification}}: 35B30; 35B40; 35C07; 35K57.}
\end{abstract}
	
\tableofcontents


\section{Introduction and main results}\label{intro}

We consider the equation
\Fi{general}
\partial_t u=\dv(A(x)\nabla u)+q(x)\.\nabla u + f(x,u),\quad t>0,\ x\in\R^N,
\Ff
and we are interested in the large-time dynamics of bounded solutions $u$, and especially in the convergence to some front profiles along level sets. Throughout the paper, the operator $\nabla$ stands for the gradient operator with respect to the variables $x\in\R^N$. Regarding the terms 
in~\eqref{general}, we assume throughout the paper that
$$f(x,0)=f(x,1)=0\ \hbox{ for all }x\in\R^N,$$
and that $A(x)$, $q(x)$, $f(x,s)$ are $(1,\cdots,1)$-periodic with respect to $x\in\R^N$ (for short, we say periodic in the sequel), i.e.
$$\forall\,h\in\Z^N,\qquad A(\.+h)\equiv A,\qquad q(\.+h)\equiv q,\qquad f(\.+h,\.)\equiv f.$$
The matrix field $x\mapsto A(x)=(a_{ij}(x))_{1\le i,j\le N}$ ranges in the set of symmetric positive definite matrices, and it is of class $C^{1,\alpha}(\R^N)$ for some $\alpha>0$. The vector field $x\mapsto q(x)=(q_i(x))_{1\le i\le N}\in\R^N$ is of class $C^1(\R^N)$, together with
$$\mathrm{div}\,q=0\hbox{ in }\R^N\ \hbox{ and }\ \int_{(0,1)^N}\!\!q(x)dx=0.$$
Lastly, $f:(x,s)\mapsto f(x,s)$ is of class $C^{0,\alpha}(\R^N\times[0,1])$, $\partial_sf$ exists and is of class $C^{0,\alpha}(\R^N\times[0,1])$.\footnote{These assumptions on the regularity of $f$ are a bit stronger than some other assumptions in the literature, but they will be used in the proofs of some intermediate results (Proposition~\ref{pro:U} and Lemmata~\ref{lem:interior}-\ref{lem:exterior}) which constitute some steps of the main results.} We consider solutions $u$ of~\eqref{general} with measurable initial conditions $u_0:\R^N\to[0,1]$, in the sense that $u(t,\cdot)\to u_0$ as $t\to0^+$ in $L^1_{loc}(\R^N)$. Each solution $u$ is understood as the unique bounded solution with initial condition $u_0$, and it is such~that
$$0\le u(t,x)\le1\ \hbox{ for all $t>0$ and $x\in\R^N$},$$
with strict inequalities if $u_0$ is not almost everywhere equal to $0$ or to $1$, by the strong maximum principle. Furthermore, from parabolic regularity theory, $u$ is then a classical solution in~$(0,+\infty)\times\R^N$, namely it is of class $C^1$ in $t$ and $C^2$ in~$x$, and $\displaystyle\|\partial_tu\|_{C^{0,\alpha/2}([1,+\infty)\times\R^N)}+\max_{1\le i\le N}\|\partial_{x_i}u\|_{C^{0,1}([1,+\infty)\times\R^N)}+\max_{1\le i,j\le N}\|\partial_{x_ix_j}u\|_{C^{0,\alpha/2}([1,+\infty)\times\R^N)}$ is bounded by a universal constant depending only on $A$, $q$ and $f$, with $\|v\|_{C^{0,\beta}(E)}:=\|v\|_{L^\infty(E)}+\sup_{\zeta\neq\zeta'\in E}|v(\zeta)-v(\zeta')|/|\zeta-\zeta'|^\beta$ for $E\subset\R\times\R^N$ and $\beta>0$.

Propagation phenomena are an important aspect of reaction-diffusion equations. A prominent question is to know whether and how one of the steady states $0$ or $1$ invades the other one. To describe such invasions, fronts propagating with average speed and connecting these two steady states play an essential role. We show in this paper that the fronts are attractive, in the sense that they appear as spatially local limits, along sequences of points, in the large-time behavior of solutions of~\eqref{general}, even if the initial conditions $u_0$ are far from any front. This question of convergence to front profiles is a  widely open problem in the theory of reaction-diffusion equations in arbitrary dimension~$N$ or for general initial conditions.

We first highlight in Section~\ref{sec11} one main result (Theorem~\ref{th1}) on the specific important case of compactly supported initial conditions. That result is actually part of more general results which are detailed in Section~\ref{sec12}. The main hypotheses used in these main results are commented in Sections~\ref{sec21}-\ref{sec23}. The conclusions and examples of applications of the main results are discussed in Section~\ref{sec24}. The proofs of the main results are carried out in Section~\ref{sec3}, while Section~\ref{sec4} is devoted to the proof of auxiliary results related to the main hypotheses and properties of pulsating traveling fronts and asymptotic invasion shapes.

Throughout the paper, ``$|\ |$'' and ``$\ \cdot\ $'' denote the Euclidean norm and inner product in $\R^N$. For $x\in\R^N$ and $r>0$, $B_r(x)$ is the open Euclidean ball with center~$x$ and radius~$r$, $B_r:=B_r(0)$, and $\Sph:=\{e\in\R^N:|e|=1\}$ is the unit Euclidean sphere of~$\R^N$. We set $\R^+:=(0,+\infty)$ and, for $x\in\R^N\setminus\{0\}$,
$$\hat{x}:=\frac{x}{|x|}.$$


\subsection{A main result with compactly supported initial conditions}\label{sec11}

For the main results, we assume that the steady states $0$ and $1$ are weakly stable for $f$, in the following sense:

\begin{hypothesis}\label{hyp:comb-bi}
There exists $\delta\in(0,1/2)$ such that, for every $x\in\R^N$, the function $s\mapsto f(x,s)$ is non-increasing in $[0, \delta]$ and decreasing in $[1-\delta, 1]$.
\end{hypothesis}

Next, we will assume an implicit hypothesis which is not expressed in terms of the coefficients of the equation. Namely, we assume that~\eqref{general} has pulsating traveling fronts describing the invasion of the steady state~$0$ by the steady state~$1$, in every direction:

\begin{hypothesis}\label{hyp:c*>0}
For any $e\in\Sph$, there exists a \PTF\ connecting~$1$ to~$0$ in the direction $e$ with a positive speed, in the sense that there are $c^*(e)>0$ and an entire $($defined for all $t\in\R$$)$ classical solution $\phi_e:\R\times\R^N\to(0,1)$ of~\eqref{general} of the type
\Fi{ptf1}
\phi_e(t,x)=U_e(x,x\cdot e-c^*(e)t),
\Ff
where the function $U_e(x,z)$ is periodic in the $x$-variable and satisfies
\Fi{ptf2}
\lim_{z\to-\infty}U_e(x,z)=1,\ \ \lim_{z\to+\infty}U_e(x,z)=0,\ \hbox{uniformly with respect to $x\in\R^N$}.
\Ff
\end{hypothesis}

Hypotheses~\ref{hyp:comb-bi}-\ref{hyp:c*>0} are related to the {\em bistability} of the equation~\eqref{general},  in a broad sense. 
They are commented in Section~\ref{sec21}. Under Hypotheses~\ref{hyp:comb-bi}-\ref{hyp:c*>0}, the speed~$c^*(e)$ in any direction~$e$ is unique, $\phi_e$ is unique up to shift in $t$, the map $e\mapsto c^*(e)$ is continuous, and the map $e\mapsto\phi_e$ is continuous in some sense after normalization, see Proposition~\ref{pro:U}. We point out here that, since $c^*(e)>0$ under Hypothesis~\ref{hyp:c*>0}, the change of variable $(t,x)\mapsto(x,x\cdot e-c^*(e)t)$ is a $C^\infty$ diffeomorphism from $\R^{N+1}$ onto itself, hence $U_e$ is continuous, as is $\phi_e$. Furthermore, since $0<\phi_e<1$ in $\R\times\R^N$ from the strong parabolic maximum principle, there holds $0<U_e(x,z)<1$ for all $(x,z)\in\R^N\times\R$, and
\be\label{Ue}
\forall\,a\le b\in\R,\ \ \ 0<\min_{(x,z)\in\R^N\times[a,b]}U_e(x,z)\le\max_{(x,z)\in\R^N\times[a,b]}U_e(x,z)<1,
\ee
by continuity in $(x,z)$ and periodicity with respect to $x$. From parabolic estimates and the $C^{0,\alpha}(\R^N\times[0,1])$ regularity of $f$ and $\partial_sf$, the functions $\phi_e$ and $\partial_t\phi_e$ together with their first-order, respectively second-order, derivatives with respect to~$t$, respectively with respect to~$x$, are continuous and bounded in $\R\times\R^N$. Hence, the functions $U_e$, $\nabla U_e$, $\partial_zU_e$, $\nabla\partial_zU_e$ and $\partial^2_{zz}U_e$ are at least continuous and bounded in~${\R^N\times\R}$, where $\nabla:=\nabla_x$.

A fundamental open problem in the theory of reaction-diffusion equations~\eqref{general} in dimensions $N\ge2$ is the local-in-space convergence at large time to profiles of pulsating fronts along some sequences of points. To address this question, we consider the {\em $\O$-limit set} of a solution $u:\R^+\times\R^N\to[0,1]$ of~\eqref{general}, which is defined by
\be\label{defOmegau}
\Omega(u):=\left\{\begin{array}{rcl}\!\!\psi\in L^\infty(\R^N) & \!\!\!\!:\!\!\!\! & u(t_n,x_n+\.)\displaystyle\mathop{\longrightarrow}_{n\to+\infty}\psi\text{ in $L^\infty_{loc}(\R^N)$,}\\
& \!\!\!\! & \text{for some sequences $(t_n)_{n\in\N}$ in $\R^+$ diverging to $+\infty$}\\
& \!\!\!\! & \text{and $(x_n)_{n\in\N}$ in $\R^N$}\end{array}\!\!\right\}.
\ee
This set is composed by all possible asymptotic profiles of $u$. It can also be written as
\be\label{eqOmegau}
\Omega(u)=\bigcup_{e\in\Sph}\Omega_e(u),
\ee
where
$$\Omega_e(u):=\left\{\begin{array}{rcl}\!\!\psi\in L^\infty(\R^N) & \!\!\!\!:\!\!\!\! & u(t_n,x_n+\.)\displaystyle\mathop{\longrightarrow}_{n\to+\infty}\psi\text{ in $L^\infty_{loc}(\R^N)$ as $n\to+\infty$,}\\
& \!\!\!\! & \text{for some sequences $(t_n)_{n\in\N}$ in $\R^+$ diverging to $+\infty$}\\
& \!\!\!\! & \text{and $(x_n)_{n\in\N}$ in $\R^N\setminus\{0\}$ such that $\displaystyle\hat{x_n}\mathop{\longrightarrow}_{n\to+\infty}e$}\end{array}\!\!\right\}$$
is called the $\O$-limit set of $u$ in the direction $e$.\footnote{The inclusion~``$\supset$'' in~\eqref{eqOmegau} holds by definition. The reverse one follows from the fact that, for any sequence $\seq{x}$ in $\R^N$, one can extract a subsequence such that $(\hat{x_{n_k}})_{k\in\N}$ converges to some $e\in\Sph$, unless all but finitely many $x_n$'s are equal to the $0$ vector in $\R^N$. However, the latter type of sequences can be omitted in the definition~\eqref{defOmegau} of~$\O(u)$ and replaced by $(e/n)_{n\in\N^*}$, for given $e\in\Sph$, since, for any sequence $(t_n)_{n\in\N}$ with $\liminf_{n\to+\infty}t_n>0$, one has, by parabolic estimates, $u(t_n,\.)-u(t_n,e/n+\.)\to0$ as $n\to+\infty$ in $L^\infty(\R^N)$.} Notice that the sets $\Omega(u)$ and~$\Omega_e(u)$ are all non-empty and included in $C^2(\R^N)$ from standard parabolic estimates.

In the sequel, we say that a function $u$ is an {\it invading solution} of~\eqref{general} if
\be\label{definvading}
u(t,\cdot)\to1\ \hbox{as $t\to+\infty$ locally uniformly in $\R^N$}.
\ee
If $u$ is an invading solution, then $u(t_n,x_n+\cdot)\to1$ as $n\to+\infty$ locally uniformly in~$\R^N$, for any sequence $(t_n)_{n\in\N}$ diverging to $+\infty$ and for any {\it bounded} sequence $(x_n)_{n\in\N}$ in $\R^N$. However, the sequences $(x_n)_{n\in\N}$ in~\eqref{defOmegau} can also be {\it unbounded}, and~\eqref{definvading} is therefore not enough at all to describe the way the state $1$ invades the whole space~$\R^N$. We want to know more about the local profiles appearing at large time in the regions where $u$ is away from $0$ and $1$.

An important case of our more general results concerns compactly supported initial data, in which case we show that any directional limit set~$\Omega_e(u)$ contains some profiles of pulsating traveling fronts connecting $1$ to $0$:

\begin{theorem}\label{th1}
Assume that Hypotheses~$\ref{hyp:comb-bi}$-$\ref{hyp:c*>0}$ hold. Let $u$ be an invading solution to~\eqref{general} with a compactly supported initial condition $u_0$. Then, for any $e\in\Sph$, there is $\nu\in\Sph$ such that
\be\label{enu}
\Omega_e(u)\supset\big\{\phi_\nu(t,\cdot+y):(t,y)\in\R\times\R^N\big\}\ \cup\ \{0,1\}.
\ee
\end{theorem}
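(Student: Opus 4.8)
The plan is to track, along the direction $e$, the level sets of the invading solution $u$ and show that somewhere in the ``transition zone'' a pulsating front in some direction $\nu$ emerges as a local limit. First I would fix a level $\theta\in(\delta,1-\delta)$ (so that the reaction is well-behaved on both ends by Hypothesis~\ref{hyp:comb-bi}) and consider, for each large time $t$, the point (or a point) on the ray $\R^+e$ at which $u(t,\cdot)$ equals $\theta$; more robustly, I would work with the ``width'' of the transition along $e$, i.e.\ look at the furthest point in direction $e$ where $u$ is above some small level and the nearest where it is below a level close to $1$. Since $u$ is an invading solution with compactly supported datum, on any fixed ball $u\to 1$, so these transition points escape to infinity; the key quantitative input is that they escape at a finite, essentially linear, rate — this is the content of spreading/Freidlin--G\"artner type estimates for such equations, and is presumably established or quoted in the paper (the generalized Freidlin--G\"artner formula mentioned in the abstract). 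Using that, I can pick sequences $t_n\to+\infty$ and points $x_n$ with $\hat{x_n}\to e$ sitting right in the transition layer, where $u(t_n,x_n)$ stays pinned in $[\delta,1-\delta]$.

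Next I would extract a limit: set $v_n(t,x):=u(t_n+t,\,x_n+x)$ and use the uniform parabolic estimates stated in the introduction to pass (up to a subsequence) to an entire solution $v_\infty$ of \eqref{general} (translated by the limit of $x_n$ modulo $\Z^N$, which by periodicity we may absorb), with $0\le v_\infty\le 1$ and $v_\infty(0,0)\in[\delta,1-\delta]$, so $v_\infty\not\equiv 0,1$ and hence $0<v_\infty<1$ by the strong maximum principle. The heart of the matter is to identify $v_\infty$ as (a time-shift, space-shift of) a pulsating front $\phi_\nu$ for some $\nu\in\Sph$. For this I would show $v_\infty$ is, in a suitable sense, monotone and ``almost planar'' at large space scales: the transition-layer location in direction $e$ moves with the asymptotic speed, and the profile becomes invariant along the level sets. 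The cleanest route is a Liouville-type classification: under Hypotheses~\ref{hyp:comb-bi}--\ref{hyp:c*>0}, any entire solution $0<v<1$ of \eqref{general} that is ``trapped between two shifts of a pulsating front'' (i.e.\ squeezed between $\phi_\nu(t+s_1,x)$ and $\phi_\nu(t+s_2,x)$ for suitable $\nu$, $s_1<s_2$) must itself be a shift of $\phi_\nu$; uniqueness up to shift of $\phi_e$, quoted from Proposition~\ref{pro:U}, is exactly the tool that makes such a sandwiching argument close.

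So the main steps, in order, are: (i) fix intermediate levels and, using spreading estimates plus $\hat{x_n}\to e$, choose $(t_n,x_n)$ in the transition layer with $u(t_n,x_n)$ bounded away from $0$ and $1$; (ii) pass to an entire limit solution $v_\infty$ and record $0<v_\infty<1$; (iii) build super- and subsolutions out of pulsating fronts $\phi_\nu$ — with $\nu$ obtained as a limit of the (unit) normals to the level sets of $u(t_n,\cdot)$ near $x_n$, or more simply from the direction of steepest descent — to sandwich $v_\infty$ between two shifts of $\phi_\nu$; (iv) invoke the uniqueness/Liouville property to conclude $v_\infty=\phi_\nu(\cdot+s,\cdot+y)$; (v) finally, by also choosing $(t_n,x_n)$ further behind or ahead of the layer one recovers the profiles $1$ and $0$ in $\Omega_e(u)$, and by varying the time-shift in the limit (using translation invariance of the class of limits and the continuity of the flow) one gets the full family $\{\phi_\nu(t,\cdot+y):(t,y)\in\R\times\R^N\}$, completing \eqref{enu}.

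The step I expect to be the main obstacle is (iii)--(iv): constructing the sandwiching fronts with the \emph{correct} direction $\nu$ and then rigidly identifying the limit. The difficulty is that a priori the level sets of $u$ near $x_n$ need not be close to any hyperplane — in a general periodic medium in dimension $N\ge 2$ the invasion shape can be non-smooth and the layer can be curved — so one must either extract $\nu$ from a careful blow-down/geometric argument on the asymptotic invasion shape (Freidlin--G\"artner geometry) and show the limit profile genuinely depends on a single ``phase'' $x\cdot\nu-c^*(\nu)t$, or run a more delicate squeezing argument that simultaneously produces $\nu$ and the bounding shifts. Getting the compatibility between the geometry of the spreading set in direction $e$ and the planar front in direction $\nu$ — in particular that such a $\nu$ exists with $c^*(\nu)$ matching the observed layer speed in direction $e$ — is the crux.
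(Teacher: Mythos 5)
Your outline gets the soft parts right (extracting an entire limit solution along the transition layer, recovering $0$ and $1$, and the role of uniqueness of $\phi_\nu$ up to shift), and you correctly locate the crux at steps (iii)--(iv); but that crux is not resolved, and the mechanism you propose for it would not work as stated. First, the trapping of the limit $v_\infty$ between two \emph{bounded} time-shifts $\phi_\nu(\cdot+s_1,\cdot)$ and $\phi_\nu(\cdot+s_2,\cdot)$ is not available from the spreading information you invoke: the Freidlin--G\"artner formula~\eqref{formuleFG} and the convergence of the rescaled level sets~\eqref{ElambdaW} locate the layer only to precision $o(t)$, whereas a two-sided sandwich with finite shifts is a bounded-in-time localization of the interface --- essentially equivalent to the convergence result you are trying to prove. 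No argument is offered to upgrade $o(t)$ control to $O(1)$ control, and none of the quoted spreading results supplies it. Second, extracting $\nu$ as a limit of unit normals to the level sets of $u(t_n,\cdot)$ (or from $\widehat{\nabla u}$) is not viable: there is no regularity or nondegeneracy of these level sets to work with, and at a corner of $\W_0$ the relevant directions are the minimizers of~\eqref{formuleFG}, not gradients of $u$.

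The paper's route is genuinely different at exactly this point. The direction is taken to be a minimizer $\xi$ of $w(e)$ in~\eqref{formuleFG}: convexity of $\W_0$ then gives a supporting half-space, hence the \emph{exterior ball condition} at $\bar x=w(e)e$ together with the speed relation $\bar x\cdot\xi=c^*(\xi)$ for free. The identification of the limit is then done by a \emph{one-sided} sliding/contact-point argument (Lemmata~\ref{lem:interior}--\ref{lem:exterior}) rather than a two-sided sandwich: one builds an almost-radial comparison function $U_\xi\bigl(x,\pm(|x-ty|-\rho_\e(t))\bigr)\mp\mu w^\e$, with $\rho_\e$ a slowly inflating radius and $w^\e$ an exponential times a periodic principal eigenfunction of~\eqref{evp}; Hypothesis~\ref{hyp:comb-bi} forces the first contact with $u$ to occur where the profile takes an intermediate value, and the contact points satisfy $x^\e_\mu/t^\e_\mu\to\bar x$. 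Blowing up along them flattens the sphere into the moving hyperplane $x\cdot\xi=ct$, and the strong maximum principle applied at the single touching point simultaneously yields $c=c^*(\xi)$ and $v_\infty\equiv U_\xi(x,x\cdot\xi-c^*(\xi)t+Z)$ --- no a priori trapping between two shifts is ever needed. To salvage your proposal you would have to either carry out this contact-point construction or prove the $O(1)$ interface localization some other way; as written, the step from ``the layer is near $t\partial\W_0$'' to ``$v_\infty$ is squeezed between two shifts of a front'' is a gap.
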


More details about the directions $\nu$ in~\eqref{enu} will be given in the other main results in Section~\ref{sec12}, together with the extension to non-compactly supported initial data, while some comments on the conclusion will be provided in Section~\ref{sec24}.


\subsection{More general main results}\label{sec12}

In order to describe more precisely the shape of the possible transition between the states $1$ and $0$ at large time for the solutions $u$ of~\eqref{general} with initial conditions having arbitrary support, we use the following notion of {\em \ais}:

\begin{definition}\label{def:W}
We say that a solution $u$ to~\eqref{general} admits an {\em \ais} $\W\subset\R^N$ if $\W$ coincides  with the interior of its closure and satisfies 
\Fi{ass-cpt}
\begin{cases}
\displaystyle\lim_{t\to+\infty}\,\Big(\min_{x\in C}u(t, tx)\Big)=1 & \text{for any non-empty compact set }C\subset\mc{W},\vspace{3pt}\\
\displaystyle\lim_{t\to+\infty}\,\Big(\max_{x\in C}u(t,tx)\Big)=0 & \text{for any non-empty compact set }C\subset\R^N\setminus\ol{\mc{W}}\,.
\end{cases}
\Ff
\end{definition}

Examples and counter-examples of existence of the asymptotic invasion shape $\W$, and general properties of $\W$, are discussed in Section~\ref{sec23}.

Under Hypotheses~\ref{hyp:comb-bi}-\ref{hyp:c*>0}, it follows from~\cite{R1}\footnote{This is a consequence of~\cite[Theorems 1.4-1.5]{R1} which hold for general space-time dependent equations, not necessarily periodic, and provide an \ais\ that reduces to the set $\W_0$ in \eqref{formuleFG} thanks to Hypothesis~\ref{hyp:c*>0}.}
that all the invading solutions of~\eqref{general} with compactly supported initial conditions $u_0$ admit an identical \ais\ $\W$, which we call $\W_0$ (as a reminder of the fact that the initial condition is compactly supported) and which is given by the so-called Freidlin-G\"artner formula:
\Fi{formuleFG}\baa{rcl}
\W_0 & = & \displaystyle\bigcap_{\xi\in\Sph}\big\{x\in\R^N:x\cdot\xi<c^*(\xi)\big\}\\
& = & \displaystyle\big\{re:e\in\Sph,\ r\in[0,w(e))\big\}\ \hbox{ with }\ w(e):=\min_{\xi\in\Sph,\,\xi\cdot e>0}\ \frac{c^*(\xi)}{\xi\cdot e}>0.\eaa
\Ff
From this formula and the continuity of the map $\xi\mapsto c^*(\xi)$ from $\Sph$ to $\R^+=(0,+\infty)$ (by Proposition~\ref{pro:U} below), the set $\W_0$ is open, bounded, convex, it contains a neighborhood of the origin, the minimum in~\eqref{formuleFG} is truly reached, the map $e\mapsto w(e)$ is continuous from $\Sph$ to $\R^+$ as well, and $\W_0$ is the {\em Wulff shape} of the envelop set of the speeds $c^*(\xi)$ of pulsating fronts. 
The set $\W_0$ can also be viewed as the large-time limit of the rescaled upper level sets of the invading solutions $u$ with compactly supported initial supports (by Proposition~\ref{pro:level} below). 
Furthermore, at any point~${z}=w(\hat{z})\hat{z}$ of~$\partial\W_0$ where~$\W_0$ has a tangent plane, the expression~\eqref{formuleFG} for $w(\hat{z})$ is uniquely minimized by $\xi=\nu$, where~$\nu$ is the exterior unit normal of~$\W_0$ at~${z}$,\footnote{\label{f2}Indeed, consider any minimizer $\xi$ in the expression~\eqref{formuleFG} for $w(\hat{z})$, that is, $\xi\in\Sph$ with $\xi\cdot\hat{z}>0$ and $w(\hat{z})=c^*(\xi)/(\xi\cdot\hat{z})$. The set $\W_0$ then lies by~\eqref{formuleFG} in the open half-space $E\!:=\!\{x\!\in\!\R^N\!:\!x\cdot\xi\!<\!c^*(\xi)\}$, while $z=w(\hat{z})\hat{z}$ belongs to both $\partial\W_0$ and $\partial E$. Since $\W_0$ has a tangent plane at $z$ with exterior unit normal $\nu$ and since $\W_0$ must lie on one side of this tangent plane by convexity, it follows that this tangent plane must be $\partial E$ and that $\xi=\nu$.} that is,
$${z}\.\nu=c^*(\nu).$$
Therefore, we refer to the above identity as the ``regular Freidlin-G\"artner formula''.

The next theorem shows, first, that such formula holds for any solution $u$ at every regular point $z$ of~$\partial\W$ without requiring the boundedness of the support of $u_0$, and second, that the $\O$-limit set of $u$ in the direction $\hat z$ contains profiles of pulsating traveling fronts. Regular here means that $\W$ satisfies both the interior and the exterior ball conditions at the boundary point ${z}$, which assert respectively that there exists an open Euclidean ball~$B$ (resp.~$B'$) such that $B\subset\W$ (resp.~$B'\subset\R^N\setminus\W$) and ${z}\in\partial B$ (resp.~${z}\in\partial B'$). The simultaneous validity of the two conditions imply in particular that~$\W$ has a tangent plane at~$z$, 
with the (unique) exterior unit normal to~$\W$ at~${z}$ being given by $\hat{z-y}=\hat{y'-z}$, 
where $y$ and $y'$ are the centers of $B$ and $B'$ respectively.

\begin{theorem}\label{th2}
Assume that Hypotheses~$\ref{hyp:comb-bi}$-$\ref{hyp:c*>0}$ hold. Let $u$ be a solution to~\eqref{general} admitting an \ais\ $\W$. Assume that there exists a point ${z}\in\partial\W$ at which $\W$ satisfies the interior and exterior ball conditions. Let  $\nu$ be the exterior unit normal to~$\W$ at~${z}$. Then the following properties hold:
\begin{enumerate}[$(i)$\;]
\item ${z}\.\nu=c^*(\nu)$\,;
\item $\displaystyle\O_{\hat{z}}(u)\supset\big\{\phi_\nu(t,\cdot+y):(t,y)\in\R\times\R^N\big\}\cup\{0,1\}$.\end{enumerate}
\end{theorem}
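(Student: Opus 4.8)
\textbf{Proof strategy for Theorem~\ref{th2}.}

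The plan is to work along a well-chosen sequence of times and translated observation points approaching the boundary point $z$ from the invaded side, and to extract from the solution a limiting entire solution which, thanks to the weak stability Hypothesis~\ref{hyp:comb-bi} and a sliding/comparison argument, must dominate and be dominated by pulsating fronts in the direction $\nu$, forcing it to be a front and pinning down its speed to $c^*(\nu)$.

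First I would fix a direction and set up the geometry. Using the interior and exterior ball conditions at $z$, with balls $B=B_\rho(y)\subset\W$ and $B'=B_{\rho'}(y')\subset\R^N\setminus\W$ tangent at $z$, one gets that $\nu=\widehat{z-y}=\widehat{y'-z}$ is the exterior unit normal and that, near $z$, the set $\W$ lies between two spheres. Take $t_n\to+\infty$ and consider $u(t_n, t_n z + \cdot)$; by the interior ball condition and~\eqref{ass-cpt}, $u(t_n,\cdot)$ is close to $1$ on $t_n B$, i.e.\ on a ball of radius $\rho t_n$ tangent to $t_n z$ from inside; by the exterior ball condition it is close to $0$ on $t_n B'$. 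After a suitable (bounded in $n$) spatial shift $x_n$ with $\widehat{x_n}\to\widehat z$, chosen so that the translated solutions stay in the transition zone (neither $\to 0$ nor $\to 1$ — such shifts exist by continuity of $u$, the fact that $u\to 1$ locally uniformly inside and $\to 0$ outside after rescaling, and a discrete intermediate-value argument), parabolic estimates give a subsequence along which $u(t_n, x_n+\cdot)\to \psi$ in $C^2_{loc}$, where $\psi$ is a solution of the elliptic profile of~\eqref{general} wedged between the values $0$ and $1$ and not identically either. More precisely, translating also in time one obtains an entire solution $v(t,x)$ of~\eqref{general}, $0\le v\le 1$, with $v\not\equiv 0,1$; and the rescaled geometry forces $v$ to satisfy: $v$ is close to $1$ on the half-space $\{x\cdot\nu < 0\}$ side and close to $0$ on the $\{x\cdot\nu>0\}$ side in an averaged/sweeping sense as $t$ varies, because the interior and exterior balls of radius $\sim \rho t_n$, $\rho' t_n$ become, after rescaling by the fixed viewing window and recentering, the two half-spaces bounded by the tangent plane $\{x\cdot\nu=0\}$ translated by $-c^*(\nu)t$ up to an error.

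Next, the two directional inequalities. On one side: since inside $\W$ the solution tends to $1$, by~\cite{R1} (or by constructing compactly supported subsolutions below that spread) one shows that from below $u$ is asymptotically at least a front $\phi_\nu$ suitably shifted; passing to the limit gives $v(t,x)\ge \phi_\nu(t, x+y)$ for an appropriate $y$, after a shift in $t$. On the other side, using the exterior ball and the fact that outside $\ol\W$ one has $u\to 0$ (after rescaling), together with Hypothesis~\ref{hyp:comb-bi} which makes $\delta$ and $1-\delta$ stable, one builds a supersolution out of a shifted front $\phi_\nu$ (possibly together with the stability of $0$ near $u\le\delta$) to get $v(t,x)\le \phi_\nu(t, x+y')$. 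Then a sliding argument in the direction $\nu$ — slide the front until it first touches $v$ — combined with the strong maximum principle and uniqueness of the front up to time-shift (Proposition~\ref{pro:U}), forces $v(t,x)=\phi_\nu(t,x+y)$ for some fixed $y$. Since $v\in\Omega_{\hat z}(u)$ by construction, and since translating $v$ in $t$ and in $x$ keeps it in $\Omega_{\hat z}(u)$ (the observation times and points can be shifted accordingly), we conclude $\phi_\nu(t,\cdot+y)\in\Omega_{\hat z}(u)$ for all $(t,y)\in\R\times\R^N$, which is $(ii)$. Property $(i)$, $z\cdot\nu=c^*(\nu)$, then falls out of the speed of the limiting front: the front profile moves with speed $c^*(\nu)$, so the transition layer of $u(t,\cdot)$ near direction $\hat z$ travels at speed $c^*(\nu)$ in the direction $\nu$; matching this with the position $tz$ of the boundary of the rescaled invaded region forces $z\cdot\nu=c^*(\nu)$. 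Alternatively, $(i)$ can be obtained geometrically and independently: the interior ball condition and~\eqref{formuleFG}-type lower bounds give $z\cdot\nu\ge c^*(\nu)$ while the exterior ball condition and the front-like upper bound give $z\cdot\nu\le c^*(\nu)$.

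\textbf{Main obstacle.} The delicate point is the two-sided trapping of the limiting entire solution $v$ between shifted copies of the \emph{same} front $\phi_\nu$ with a \emph{common} shift, and in particular producing the \emph{upper} barrier: one must leverage Hypothesis~\ref{hyp:comb-bi} to keep the constructed supersolution (a shifted front, or a front glued with the stable plateaus near $0$ and $1$) from being destroyed by the reaction term, and control the error terms coming from replacing the large balls $t_nB$, $t_nB'$ by exact half-spaces and from the bounded recentering shifts $x_n$. Getting the shifts to match — so that the sliding argument pins $v$ exactly to a front rather than merely sandwiching it between two fronts a bounded distance apart — requires care: this is where one uses that the interior and exterior balls are tangent at the \emph{same} point $z$ with the \emph{same} normal $\nu$, so that the inner and outer asymptotic positions of the transition layer coincide to leading order, squeezing the two shifts together. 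Once that is done, uniqueness of the front (Proposition~\ref{pro:U}) and the strong maximum principle finish the identification.
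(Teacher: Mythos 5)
Your high-level picture (extract a limiting entire solution near the rescaled boundary point, identify it with a pulsating front via comparison, the strong maximum principle and the uniqueness of the front, and read off the speed to get $(i)$) is the right target, and you correctly flag where the difficulty lies. But the step you defer — producing the front barriers $\phi_\nu(t,\cdot+y)\le v\le\phi_\nu(t,\cdot+y')$ — is the entire content of the theorem, and the route you sketch for it does not work as stated. The only information available from the \ais\ is at the linear scale: $u(t,tx)\to1$ on compacts of $\W$ and $\to0$ on compacts of $\R^N\setminus\ol\W$. This says nothing about the width or profile of the transition layer at scale $O(1)$; in particular the limit $v$ obtained by recentering in the transition zone need \emph{not} converge to $1$ and $0$ on the two sides of a moving hyperplane, and neither~\cite{R1} nor spreading of compactly supported subsolutions gives that $u$ dominates (or is dominated by) a shifted planar front at any finite time. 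Asserting that ``the rescaled geometry forces'' $v$ to be front-like, or that ``one builds a supersolution out of a shifted front'', is precisely the claim that has to be proved, and no routine argument fills it in.

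The paper's proof (Lemmata~\ref{lem:interior} and~\ref{lem:exterior}) takes a structurally different route that avoids sandwiching altogether: for each ball condition separately it constructs an explicit one-sided barrier, namely an \emph{almost-radial} deformation $U_\nu\big(x,\pm(|x-ty|-\rho_\e(t))\big)$ of the front profile whose radius $\rho_\e$ grows with derivative tending slowly to $r+\e$, corrected by a term $-\mu\,\chi(\cdot)\varphi(x)$ built from the principal eigenfunction of~\eqref{evp} and an exponential $\chi$ decaying \emph{more slowly} than $U_\nu$. Hypothesis~\ref{hyp:comb-bi} makes this a strict sub/supersolution wherever its value lies in $[0,\delta')\cup(1-\delta'',1]$, which forces the first contact point with $u$ to occur at an intermediate value; the design of $\rho_\e$ then guarantees that, as $\mu\to0$, the spheres flatten into hyperplanes moving at speed $\bar x\cdot\nu$ in direction $\nu$, so the limiting entire solution \emph{touches} a planar front $U_\nu(x,x\cdot\nu-ct+Z)$ from one side at a single point. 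Since that planar front is a strict sub(super)solution unless $c=c^*(\nu)$, the strong maximum principle yields both $c=c^*(\nu)$ and the identification of the limit with the front — each ball condition independently gives one inequality in $(i)$ and, at equality, the inclusion in $(ii)$. No two-sided trapping, no sliding, and no matching of shifts is needed; your worry about reconciling the inner and outer shifts is an artifact of the sandwiching strategy rather than something the tangency of the two balls resolves. As written, your proposal therefore has a genuine gap at its central step.
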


It turns out that $\W$ always contains a neighborhood of the origin, because it is open and contains the origin (by Proposition~\ref{pro:Wchar}). Hence, the vector $\hat{z}$ is well defined for any~${z}\in\partial\W$. The validity of the interior and exterior ball conditions at $z$ in Theorem~\ref{th2} holds in particular at any point ${z}$ where $\partial\W$ is twice-differentiable.
Theorem $\ref{th2}$ could be summarized by saying that: $(i)$~the regular Freidlin-G\"artner formula holds at every regular point of $\partial\W$; $(ii)$~in the direction of any regular point, the solution approaches the profiles of pulsating traveling fronts along sequences of times.

We now extend Theorem $\ref{th2}$ beyond regular points by introducing the notion of generalized normal set, defined as the set of limits of exterior unit normals of regular approximated~points:
$$\mc{V}({z})\!:=\!\left\{\!\!\baa{rcl}
\displaystyle\nu\!=\!\lim_{n\to+\infty}\nu_n & \!\!\!:\!\!\! & \displaystyle{z}_n\in\partial\W,\ \lim_{n\to+\infty}{z}_n={z},\\
& \!\!\!\!\!\! & \hbox{$\W$ satisfies the interior and exterior ball conditions at ${z}_n$,}\\
& \!\!\!\!\!\! & \hbox{$\nu_n$ is the exterior unit normal at $z_n$}\eaa\!\!\!\right\}\!.$$
Theorem~\ref{th2} then leads to the following result:

\begin{corollary}\label{cor1}
Assume that Hypotheses~$\ref{hyp:comb-bi}$-$\ref{hyp:c*>0}$ hold. Let $u$ be a solution to~\eqref{general} admitting an \ais\ $\W$. Let ${z}$ be any point of $\partial\W$ such that ${\mc{V}({z})\neq\emptyset}$. Then the following properties hold:
\begin{enumerate}[$(i)$\;]
\item ${z}\.\nu=c^*(\nu)$ \ for all $\nu\in\mc{V}({z})$;
\item $\displaystyle\O_{\hat{z}}(u)\supset\big\{\phi_\nu(t,\cdot+y):(t,y)\in\R\times\R^N,\,\nu\in\mc{V}({z})\big\}
\cup\{0,1\}$.
\end{enumerate}
\end{corollary}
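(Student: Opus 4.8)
The plan is to deduce the corollary from Theorem~\ref{th2} by a diagonal/limiting argument, using the continuity of the map $e\mapsto\phi_e$ (after normalization) from Proposition~\ref{pro:U} and the closedness of $\Omega$-limit sets under locally uniform convergence. Let $z\in\partial\W$ with $\mc{V}(z)\ne\emptyset$, and fix $\nu\in\mc{V}(z)$, together with a sequence $z_n\in\partial\W$ such that $z_n\to z$, $\W$ satisfies the interior and exterior ball conditions at each $z_n$, and the exterior unit normals $\nu_n$ at $z_n$ satisfy $\nu_n\to\nu$. By Theorem~\ref{th2}$(i)$ applied at each $z_n$, we have $z_n\cdot\nu_n=c^*(\nu_n)$; passing to the limit and using the continuity of $\xi\mapsto c^*(\xi)$ yields $z\cdot\nu=c^*(\nu)$. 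Since $\nu\in\mc{V}(z)$ was arbitrary, this proves part~$(i)$.

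For part~$(ii)$, fix $\nu\in\mc{V}(z)$ and $(t,y)\in\R\times\R^N$, with $z_n$, $\nu_n$ as above. The key point is to handle the normalization of the fronts: the profiles $\phi_{\nu_n}$ are only defined up to time shift, so I would first fix a normalization — e.g.\ by prescribing $\max_{x\in\R^N}\phi_e(0,x)=1/2$, or some analogous condition made precise via Proposition~\ref{pro:U} — under which the map $e\mapsto\phi_e$ is continuous in $C^1_{loc}(\R\times\R^N)$ (or at least in $L^\infty_{loc}$), so that $\phi_{\nu_n}(t,\cdot+y)\to\phi_\nu(t,\cdot+y)$ in $L^\infty_{loc}(\R^N)$ as $n\to+\infty$. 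Since for each fixed $n$ we have $\hat{z_n}\to\hat{z}$ is false in general — rather each $z_n$ has its own direction $\hat{z_n}\to\hat{z}$ — I need to be slightly careful: Theorem~\ref{th2}$(ii)$ gives $\phi_{\nu_n}(t,\cdot+y)\in\Omega_{\hat{z_n}}(u)\subset\Omega(u)$, so there exist sequences $t_m^{(n)}\to+\infty$ and $x_m^{(n)}$ with $\widehat{x_m^{(n)}}\to\hat{z_n}$ (as $m\to+\infty$) and $u(t_m^{(n)},x_m^{(n)}+\cdot)\to\phi_{\nu_n}(t,\cdot+y)$ in $L^\infty_{loc}$. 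A standard diagonal extraction then produces sequences $t_k\to+\infty$ and $x_k$ with $\widehat{x_k}\to\hat{z}$ and $u(t_k,x_k+\cdot)\to\phi_\nu(t,\cdot+y)$ in $L^\infty_{loc}(\R^N)$, which is exactly the statement that $\phi_\nu(t,\cdot+y)\in\Omega_{\hat z}(u)$. The inclusion of $0$ and $1$ follows directly from Theorem~\ref{th2}$(ii)$ at any single $z_n$ (or from invasion/compactness, as in the proof of Theorem~\ref{th1}).

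I expect the main obstacle to be the bookkeeping in the diagonal argument: one must choose the normalization of the family $\{\phi_e\}_{e\in\Sph}$ consistently so that Proposition~\ref{pro:U}'s continuity statement applies, and then extract so that simultaneously $t_k\to+\infty$, $\widehat{x_k}\to\hat{z}$, and the solution converges to the limiting profile — this requires quantifying "closeness in $L^\infty_{loc}$" on an exhausting sequence of balls and in the parameters $(n,m)$ jointly. None of these steps is deep; the content of the corollary is entirely carried by Theorem~\ref{th2} plus the continuity of $e\mapsto(c^*(e),\phi_e)$, and the remaining work is the routine verification that $\Omega_{\hat z}(u)$ is closed under the relevant limiting operation.
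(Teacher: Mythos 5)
Your proposal is correct and follows essentially the same route as the paper: apply Theorem~\ref{th2} at the approximating points $z_n$, pass to the limit in $(i)$ using the continuity of $e\mapsto c^*(e)$, and in $(ii)$ perform a diagonal extraction using the continuity of $e\mapsto\phi_e$ from Proposition~\ref{pro:U} (the paper uses the normalization $\phi_{\nu_n}(0,0)=1/2$, which is exactly the one covered by Proposition~\ref{pro:U}, rather than the max normalization you first mention). The only cosmetic difference is that the paper gets $\{0,1\}\subset\O_{\hat z}(u)$ directly from Lemma~\ref{lem:01} rather than via Theorem~\ref{th2} at a single $z_n$, which as written would give $\{0,1\}\subset\O_{\hat{z_n}}(u)$ rather than $\O_{\hat z}(u)$; your parenthetical appeal to the invasion argument handles this correctly.
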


Property $(i)$ above is a {\em generalized} regular Freidlin-G\"artner formula, in which the exterior unit normal $\nu$ is replaced by the whole set of generalized exterior unit normals $\mc{V}$. We show in Proposition~\ref{pro:Wchar} below that the set $\partial\W$ is Lipschitz-regular, but unfortunately this is not enough to guarantee the existence of the generalized exterior unit normals at the boundary. However, since a sufficient condition for the validity of the interior and exterior ball conditions at a point is the second order differentiability of the boundary at that point, it follows from the Alexandrov theorem that Corollary~\ref{cor1} can be applied in particular at the points ${z}\in\partial\W$ around which $\W$ is convex, that is, for which there is $r>0$ such that $\W\cap B_r({z})$ is~convex.

The arguments employed in the proof Theorem \ref{th2} lead to the following important result in the case where $u_0$ is compactly supported. We recall that, in such case, it follows from~\cite{R1} that the solution $u$~admits an \ais\ $\W_0$, given by the Freidlin-G\"artner formula~\eqref{formuleFG}.

\begin{corollary}\label{cor2}
Assume that Hypotheses~$\ref{hyp:comb-bi}$-$\ref{hyp:c*>0}$ hold. Let $u$ be an invading solution to~\eqref{general} with a compactly supported initial condition $u_0$. Its \ais~$\W_0$ therefore exists and is given by~\eqref{formuleFG}. Then, for any $e\in\Sph$, one has 
\be\label{minimizer}\Omega_{e}(u)
\supset
\left\{
\begin{array}{rcl}
\phi_\xi(t,\cdot+y) & : & (t,y)\in\R\times\R^N,\ \xi\text{ is a minimizer}\\
& & \text{in the expression of $w(e)$ in \eqref{formuleFG}}
\end{array}\right\}
\ \cup\ \{0,1\}.
\ee
Lastly, if $\partial\W_0$ is everywhere differentiable, then
\be\label{differentiable}
\Omega(u)\supset\big\{\phi_e(t,\cdot+y):(t,y)\in\R\times\R^N,\ e\in\Sph\big\}\ \cup\ \{0,1\}.
\ee
\end{corollary}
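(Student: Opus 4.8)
\emph{Overall strategy.} I would prove the two displays \eqref{minimizer} and \eqref{differentiable} separately, and obtain the second from the first. For \eqref{minimizer}, fix $e\in\Sph$ and a minimizer $\xi$ in the expression of $w(e)$, and set $z_e:=w(e)e$, which by \eqref{formuleFG} is the unique point of $\partial\W_0$ on the ray $\R^+e$, so $\hat{z_e}=e$. Minimality of $\xi$ reads $c^*(\xi)=w(e)(\xi\cdot e)=z_e\cdot\xi$, and since $\W_0=\bigcap_{\eta\in\Sph}\{x\cdot\eta<c^*(\eta)\}$ we have $\W_0\subset E_\xi:=\{x\cdot\xi<c^*(\xi)\}$ with $z_e\in\partial\W_0\cap\partial E_\xi$; thus $\partial E_\xi$ is a supporting hyperplane of $\W_0$ at $z_e$ with outer normal $\xi$, and $z_e\cdot\xi=c^*(\xi)$ (the generalized Freidlin--G\"artner identity). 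The inclusions $0,1\in\Omega_e(u)$ are immediate from Definition~\ref{def:W} applied to $x_n=rt_ne$ with $t_n\to+\infty$ and $r\in(0,w(e))$, resp.\ $r>w(e)$, since then $\hat{x_n}=e$.

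\emph{Producing the front profiles.} The heart of \eqref{minimizer} is that $\phi_\xi(t,\cdot+y)\in\Omega_e(u)$; here I would re-run the proof of Theorem~\ref{th2} with the supporting half-space $E_\xi$ in place of the two tangent balls at $z_e$ — a supporting hyperplane being the limiting case of an exterior ball of infinite radius, the inner information coming from the fact that $\W_0$ reaches $\partial E_\xi$ at $z_e$. On the outer side, $t_n\W_0\subset t_nE_\xi$ and Definition~\ref{def:W} (upgraded via parabolic estimates) force $u(t_n,\cdot)\to0$ uniformly on $\{x\cdot\xi\ge(c^*(\xi)+\delta)t_n\}$ for each $\delta>0$; coupled with a front supersolution — which exists by the weak stability of $0$ and $1$ (Hypothesis~\ref{hyp:comb-bi}) and, in this combustion/bistable setting, incurs only an $O(1)$, not a logarithmic, shift — this gives $u(t_n,x)\le U_\xi(x,\,x\cdot\xi-c^*(\xi)t_n-b_n)+o(1)$ with $(b_n)$ bounded. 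On the inner side, since $z_e\cdot\xi=\max_{\ol{\W_0}}(\,\cdot\,\cdot\xi)=c^*(\xi)$, the set where $u(t_n,\cdot)\simeq1$ reaches in the direction of $z_e$ up to $\{x\cdot\xi=c^*(\xi)t_n\}$; launching a $\xi$-front subsolution at an earlier time from a large ball contained in $t_n\W_0$ and using Definition~\ref{def:W} yields a matching bound $u(t_n,x)\ge U_\xi(x,\,x\cdot\xi-c^*(\xi)t_n-a_n)-o(1)$ with $(a_n)$ bounded. Squeezing $u(t_n,\cdot)$ near $t_nz_e$ between these two translates of $U_\xi$ and invoking the rigidity of one-dimensional bistable/combustion pulsating fronts, I would extract a subsequence and a bounded translation $x_n=t_nz_e+O(1)$ (so $\hat{x_n}\to\hat{z_e}=e$) along which $u(t_n,x_n+\cdot)\to\phi_\xi(\ol t,\cdot+\ol y)$ for some $(\ol t,\ol y)$. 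Finally, forward-invariance of $\Omega_e(u)$ under \eqref{general} (continuous dependence on the datum, $\hat{x_n}\to e$ being preserved under forward time shifts), translation of $x_n$, and the pulsating identity $\phi_\xi(t+L\cdot\xi/c^*(\xi),\cdot+L)=\phi_\xi(t,\cdot)$ for $L\in\Z^N$ together upgrade this to $\phi_\xi(t,\cdot+y)\in\Omega_e(u)$ for all $(t,y)\in\R\times\R^N$, which is \eqref{minimizer}.

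\emph{The differentiable case, and where the difficulty lies.} For \eqref{differentiable}: if $\partial\W_0$ is everywhere differentiable then, being the boundary of a bounded open convex set, it is $C^1$ with continuous surjective Gauss map $\nu:\partial\W_0\to\Sph$; since the normal cone of $\W_0$ at $z_e$ (generated by the active constraints, i.e.\ the minimizers of $w(e)$) reduces to $\R^+\nu(z_e)$, the minimizer is unique and equal to $\nu(z_e)$, so \eqref{minimizer} gives $\phi_{\nu(z_e)}(t,\cdot+y)\in\Omega_e(u)$. As $e$ ranges over $\Sph$, $z_e$ ranges over $\partial\W_0$ and $\nu(z_e)$ over $\Sph$, so $\Omega(u)=\bigcup_e\Omega_e(u)$ contains all $\phi_\xi(t,\cdot+y)$, which with $0,1\in\Omega(u)$ is \eqref{differentiable}. (One may also bypass the corner issue: by Alexandrov's theorem $\partial\W_0$ is twice differentiable on a dense set, where the interior and exterior ball conditions hold, so Theorem~\ref{th2}$(ii)$ applies there; density of the image under the continuous $\nu$, closedness of $\Omega(u)$ in $L^\infty_{loc}(\R^N)$ by diagonal extraction, and continuity of $\xi\mapsto\phi_\xi$ (Proposition~\ref{pro:U}) then give all directions.) I expect the genuine obstacle to be the inner (lower) bound in \eqref{minimizer} when $z_e$ is a corner of $\W_0$ — i.e.\ when the minimizer set is more than one ray: there the \ais\ carries no information near $t_nz_e$, no large ball of $t_n\W_0$ is tangent to $\{x\cdot\xi=c^*(\xi)t_n\}$ at $t_nz_e$, and one must launch the subsolution from deep inside $t_n\W_0$ and control its propagation to $t_nz_e$ uniformly — which is precisely where the bistable/combustion structure must be exploited.
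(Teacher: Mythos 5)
Your geometric reduction is the same as the paper's: the minimizer $\xi$ gives the supporting half-space $E_\xi=\{x\cdot\xi<c^*(\xi)\}$ with $\bar x:=w(e)e\in\partial\W_0\cap\partial E_\xi$ and $\bar x\cdot\xi=c^*(\xi)$, and the second display \eqref{differentiable} is deduced from the first exactly as you do (differentiability forces the minimizer at each boundary point to be unique and equal to the outward normal, and every $e\in\Sph$ is the outward normal at some boundary point). The gap is in how you produce the front profiles. You propose a two-sided squeeze $U_\xi(\cdot,\,x\cdot\xi-c^*(\xi)t_n-a_n)-o(1)\le u(t_n,\cdot)\le U_\xi(\cdot,\,x\cdot\xi-c^*(\xi)t_n-b_n)+o(1)$ with \emph{bounded} shifts, plus a rigidity argument. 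The lower bound with bounded $a_n$ does not follow from the \ais: Definition~\ref{def:W} only gives $u(t_n,t_nx)\to1$ for $x$ in compact subsets of $\W_0$, i.e.\ at distance $\delta t_n$ (not $O(1)$) from the hyperplane $\{x\cdot\xi=c^*(\xi)t_n\}$, and you yourself flag this as the unresolved obstacle at corners. As written, the proposal therefore does not close.

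The point you are missing is that no interior bound is needed at all. Lemma~\ref{lem:exterior} (the exterior-ball half of the proof of Theorem~\ref{th2}) requires \emph{only} the exterior ball condition at $\bar x$ with normal $\xi$, and this holds here with \emph{every} radius $r>0$, since $B_r(\bar x+r\xi)\subset\R^N\setminus E_\xi\subset\R^N\setminus\W_0$; combined with $\bar x\cdot\xi=c^*(\xi)$, the lemma's second assertion immediately yields $\O_e(u)=\O_{\hat{\bar x}}(u)\supset\{\phi_\xi(t,\cdot+y)\}$, and $\{0,1\}\subset\O_e(u)$ comes from Lemma~\ref{lem:01}. The only ``inner'' input consumed by the proof of Lemma~\ref{lem:exterior} is that every neighbourhood of $\bar x$ meets $\W_0$ (so that $\inf_{\R^N}(v^\e_\mu-u)(t,\cdot)<0$ for large $t$, producing a finite contact time); this is automatic since $\bar x\in\partial\W_0$. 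The mechanism is thus a one-sided contact-point/sliding argument with an ``almost radial'' supersolution followed by the strong maximum principle, not a squeeze between two translated fronts, and the corner difficulty you anticipate simply does not arise. Your alternative route to \eqref{differentiable} via Alexandrov's theorem, continuity of $\xi\mapsto\phi_\xi$ from Proposition~\ref{pro:U}, and closedness of $\O(u)$ is workable but unnecessary once \eqref{minimizer} is in hand.
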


Corollary~\ref{cor2} contains Theorem \ref{th1}, and additionally provides the directions $\nu$ in~\eqref{enu}. It asserts that the convergence toward a \PTF, along some sequence of times, occurs in any direction $e\in\Sph$, even in those where $\W_0$ exhibits a corner, i.e.~where the Freidlin-G\"artner formula~\eqref{formuleFG} admits multiple minimizers. Corners may actually exist,  as shown in~\cite[Corollary~6.2]{GR2}, hence the $\O$-limit set in the corresponding directions contains the profiles of several distinct pulsating~fronts. 

It is natural to wonder whether the equality actually holds in Corollary~\ref{cor2} when~$\partial\W_0$ is everywhere differentiable, i.e.
$$\Omega(u)=\big\{\phi_e(t,\cdot+y):(t,y)\in\R\times\R^N,\ e\in\Sph\big\}\ \cup\ \{0,1\}.$$
Such conclusion is known to hold for homogeneous equations (where $\W_0$ is a ball), as follows from~\cite{AW,BH2,U2}, but in the periodic setting it has only been derived in dimension $N=1$, for $f$ either of the bistable type, as follows from~\cite{DHZ2,DGM,X2,X3}, or of the Fisher-KPP type~\eqref{fkppg} below, see~\cite{HNRR2}. To our knowledge, ours are the first convergence-to-front results for periodic equations in dimensions higher than one.

More comments on the conclusions and applications of the main results, and counter-examples when the hypotheses are not fulfilled, are given in Section~\ref{sec24}.

\paragraph{Some additional notations.} For a subset $E\subset\R^N$, we call
$$\1_E:\R^N\to\{0,1\}$$
the indicator function of the set $E$, that is, $\1_E(x)=1$ if $x\in E$ and $\1_E(x)=0$ if $x\in\R^N\setminus E$. For $x\in\R^N$ and $A\subset\R^N$, we define $\dist(x,A):=\inf\big\{|x-y|:y\in A\}$ (with $\dist(x,\emptyset)=+\infty$) and, for any two subsets $A,B\subset\R^N$, we define the Hausdorff distance between $A$ and $B$ as
$$d_{\mc{H}}(A,B):=\max\Big(\sup_{x\in A}\dist(x,B),\,\sup_{y\in B}\dist(y,A)\Big)$$
with the conventions $d_{\mc{H}}(A,\emptyset)=d_{\mc{H}}(\emptyset,A)=+\infty$ if $A\neq\emptyset$ and $d_{\mc{H}}(\emptyset,\emptyset)=0$.


\section{Comments on the hypotheses and the conclusions, and complementary results}\label{sec2}

We first discuss in Section~\ref{sec21} some conditions under which Hypotheses~\ref{hyp:comb-bi} and~\ref{hyp:c*>0} are fulfilled, and the invasion property entailed by them. Section~\ref{sec23} lists examples and counter-examples of existence of asymptotic invasion shapes $\W$, and some general properties of $\W$ in an even more general framework, without Hypotheses~\ref{hyp:comb-bi}-\ref{hyp:c*>0}. Lastly, we give in Section~\ref{sec24} some applications of the main results, and we explain the necessity of the main hypotheses.


\subsection{Weak bistability, pulsating fronts, and invasion property}\label{sec21}

\paragraph{Sufficient conditions in the homogeneous case.}

In the homogeneous case with $A\equiv I_N$ (the identity matrix of size $N\times N$), $q\equiv0$ and $f(x,s)=f(s)$,~\eqref{general} reduces to
\Fi{homo}
\partial_t u=\Delta u+f(u),\quad t>0,\ x\in\R^N.
\Ff
If~$f$ is of ignition type, that~is,
\Fi{ignition}
\exists\,\alpha\in(0,1),\ \ f=0\hbox{ in $[0,\alpha]$ and $f>0$ in $(\alpha,1)$},
\Ff
or if $f$ is of bistable type, that is,
\Fi{bistable}
\exists\,\alpha\in(0,1),\ \ f<0\hbox{ in }(0,\alpha)\hbox{ and }f>0\hbox{ in }(\alpha,1),
\Ff
with $\int_0^1f\!>\!0$, then~\eqref{homo} admits planar fronts connecting $1$ to $0$ with positive and unique speed~$c$, namely solutions $u(t,x)=\varphi(x\cdot e-ct)$, with $e\in\Sph$ and $\varphi:\R\to(0,1)$ solving
\Fi{eqvarphi}
\varphi''+c\varphi'+f(\varphi)=0\hbox{ in }\R,\ \ \varphi(-\infty)=1,\ \ \varphi(+\infty)=0,\ \ c>0,
\Ff
see~\cite{AW,FM}. Such fronts are pulsating traveling fronts too, with the same speed $c$, hence Hypothesis~\ref{hyp:c*>0} holds. Furthermore, if $f$ is non-increasing in a right neighborhood of~$0$ and decreasing in a left neighborhood of $1$ (for instance if $f'(1)<0$ for~\eqref{ignition}, or $f'(0)<0$, $f'(1)<0$ for~\eqref{bistable}), then Hypothesis~\ref{hyp:comb-bi} holds as well, and any pulsating traveling front in the direction $e$ is a planar front by~\cite[Theorem~3.1]{BH2} (or by the uniqueness property in Proposition~\ref{pro:U} below). Planar traveling fronts might also exist for multistable nonlinearities $f$, for instance if the speed of a front connecting any pair of consecutive stable states is smaller than the speeds of the fronts connecting pairs of consecutive stable states above them, see~\cite{FM}.

We point out that Hypothesis~\ref{hyp:c*>0} can of course be fulfilled without Hypothesis~\ref{hyp:comb-bi}: for instance, for~\eqref{homo} and $f>0$ in $(0,1)$, the set of admissible speeds in each direction~$e$ is a closed interval $[c^*(e),+\infty)\subset\R^+$~\cite{AW,F,KPP} (and $c^*(e)>0$ does not depend on $e\in\Sph$ for~\eqref{homo}). Conversely, Hypothesis~\ref{hyp:comb-bi} can be fulfilled without Hypo\-thesis~\ref{hyp:c*>0}, for instance for~\eqref{homo} and $f$ of bistable type~\eqref{bistable} with $f'(0)<0$, $f'(1)<0$ and $\int_0^1f\le0$.

\paragraph{The general periodic equation~\eqref{general}, properties of pulsating traveling fronts.}

For~\eqref{general}, under our standing assumptions, Hypothesis~\ref{hyp:c*>0} is known to hold when $f$ is of the generalized ignition case
\Fi{fignition}
\exists\,\alpha\in(0,1),\ \ \left\{\baa{l}
f=0\hbox{ in $\R^N\times[0,\alpha]$},\vspace{3pt}\\
\displaystyle\forall\,s\in(\alpha,1),\ \ 0\le\min_{\R^N}f(\cdot,s)<\max_{\R^N}f(\cdot,s),\eaa\right.
\Ff
see~\cite{BH1,X1,X22,X4}. In this case, Hypothesis~\ref{hyp:comb-bi} holds if one further assumes that $f(x,\cdot)$ is decreasing in $[1-\delta,1]$ for some $\delta>0$. Hypothesis~\ref{hyp:comb-bi} is also fulfilled if $f$ of the strong bistable type, namely if
\Fi{bistable2}
\partial_sf(x,0)<0\hbox{ and $\partial_sf(x,1)<0$ for all $x\in\R^N$}
\Ff
(remember that $\partial_sf$ is continuous in $\R^N\times[0,1]$ and periodic in $x$). We call this case ``strongly bistable" because both steady states $0$ and $1$ are linearly stable (from above and below respectively), but $f(x,\cdot)$ may have more than one zero in the interval $[0,1]$. In the case~\eqref{bistable2}, Hypothesis~\ref{hyp:c*>0} is known to hold in dimension~$1$~\cite{DHZ2,FZ,Z2} or in higher dimensions under various additional assumptions on $A$, $q$ and $f$, for instance in highly or slowly oscillating media~\cite{DLL,DS,X4} or when there does not exist any periodic steady state between $0$ and $1$~\cite{DG,D2,GR} (see also~\cite{HPS,PX,VV,X2,X3,XZ} for further references in the case of almost-homogeneous coefficients).

The main properties of the pulsating traveling fronts and speeds given in Hypothesis~\ref{hyp:c*>0} are summarized in Proposition~\ref{pro:U} below. That result is actually proved under Hypothesis~\ref{hyp:c*>0} and under an hypothesis slightly weaker than Hypothesis~\ref{hyp:comb-bi}, in which the monotonicity of $f(x,\cdot)$ in a left-neighborhood of $1$ is not anymore assumed to be strict:

\begin{hypothesis}\label{hypbis}
There exists $\delta\in(0,1/2)$ such that, for every $x\in\R^N$, the function $s\mapsto f(x,s)$ is non-increasing in $[0, \delta]$ and in $[1-\delta, 1]$.
\end{hypothesis}

\begin{proposition}\label{pro:U}
Under Hypotheses~$\ref{hyp:c*>0}$ and~$\ref{hypbis}$, let $\phi_e(t,x)=U_e(x,x\cdot e-c^*(e)t)$ be a \PTF\ connecting $1$ to $0$ in a direction $e\in\Sph$. Then its speed~$c^*(e)$ is unique, $\partial_t\phi_e(t,x)>0$ in $\R\times\R^N$, $\phi_e$ is unique up to shift in $t$, $\partial_zU_e(x,z)<0$ in~$\R^N\times\R$, $U_e$ is unique up to shift in $z$, and it satisfies, for some $\lambda_0>0$ and $C>0$,
\be\label{Ueexp}
\forall\,(x,z)\in\R^N\times\R,\quad U_e(x,z)\leq C e^{-\lambda_0z}.
\ee
Furthermore, the map $e\mapsto c^*(e)$ is continuous from $\Sph$ to $\R^+$. Lastly, for any fixed $\mu\in(0,1)$, if $\phi_e$ is normalized so that $\phi_e(0,0)=\mu$, then the maps $e\mapsto\phi_e$ and $ e\mapsto U_e$ are continuous from $\Sph$ to $L^\infty_{loc}(\R\times\R^N)$  and $L^\infty(\R^N\times\R)$, respectively.
\end{proposition}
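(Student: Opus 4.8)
The plan rests on two standard devices — the parabolic comparison principle combined with the sliding method, and the construction of exponential linear barriers from periodic principal eigenvalues — together with the observation that the sole role of Hypothesis~\ref{hypbis} is to render the states $0$ and $1$ \emph{stable} in the weakest useful sense: near them the nonlinearity has a favourable sign, so that perturbations of these states satisfy a linear parabolic inequality and hence cannot grow. I would proceed in three steps: the exponential bound, then monotonicity together with the two uniqueness statements, then the continuity statements — the second step being by far the hardest.

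\emph{The exponential bound \eqref{Ueexp}.} As $f(x,\cdot)$ is non-increasing on $[0,\delta]$ and $f(x,0)=0$, one has $f(x,s)\le0$ for $s\in[0,\delta]$; by~\eqref{ptf2} there is $Z_0\in\R$ with $0<U_e(x,z)<\delta$ for all $x$ and all $z\ge Z_0$, so on the moving half-space $\{x\cdot e-c^*(e)t\ge Z_0\}$ the front $\phi_e$ is a subsolution of the \emph{linear} equation $\partial_tv=\dv(A(x)\nabla v)+q(x)\cdot\nabla v$. I would then look for a supersolution of that linear equation of the form $\overline{w}(t,x)=Ce^{-\lambda(x\cdot e-c^*(e)t-Z_0)}\psi_\lambda(x)$ with $\psi_\lambda>0$ periodic: substituting, $\psi_\lambda$ has to be the periodic principal eigenfunction of the operator obtained from $\dv(A\nabla\cdot)+q\cdot\nabla$ by exponential conjugation in the direction $e$, with some eigenvalue $k(\lambda)$, and $\overline{w}$ is a supersolution as soon as $\lambda c^*(e)\ge k(\lambda)$. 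A direct computation gives $k(0)=0$ (the eigenfunction is constant, since $\dv q=0$) and $k'(0)=-\int_{(0,1)^N}(\dv(Ae)+q\cdot e)\,dx=0$ (by periodicity and $\int_{(0,1)^N}q=0$); since $c^*(e)>0$ by Hypothesis~\ref{hyp:c*>0}, the inequality $\lambda c^*(e)>k(\lambda)$ holds for all small $\lambda>0$. Fixing such a $\lambda=:\lambda_0$, and $C$ large enough that $\overline{w}\ge U_e$ on $\{z=Z_0\}$ while $\overline{w}\ge1$ on $\{z\le Z_0\}$, the comparison principle gives~\eqref{Ueexp}. (The reversed computation, applied to $1-\phi_e$ where $U_e>1-\delta$, yields an exponential approach to $1$ as $z\to-\infty$ when $f(x,\cdot)$ is strictly decreasing near $1$; under the mere monotonicity of Hypothesis~\ref{hypbis} one only gets that $1-U_e(x,\cdot)$ does not grow, which is enough below.)

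\emph{Monotonicity, uniqueness of the speed, uniqueness up to shift.} Let $\phi^{(1)},\phi^{(2)}$ be two \PTF s in the direction $e$, with speeds $c_1,c_2$. First, uniqueness of the speed: shifting $\phi^{(2)}$ far towards $1$ one arranges $\phi^{(1)}(t_0,\cdot)\le\phi^{(2)}(t_0+s_0,\cdot)$ on $\R^N$, hence $\phi^{(1)}(t,\cdot)\le\phi^{(2)}(t+s_0,\cdot)$ for all $t\ge t_0$ by the comparison principle; if $c_1>c_2$, evaluating this along the transition zone of $\phi^{(1)}$, at points $x_t$ with $x_t\cdot e=c_1t$, the left-hand side stays bounded below by a positive constant whereas the right-hand side equals $U^{(2)}(x_t,(c_1-c_2)t-c_2s_0)\to0$ as $t\to+\infty$ — a contradiction, so $c_1=c_2=:c^*(e)$. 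With equal speeds, I would slide $\phi^{(1)}(\cdot+\zeta,\cdot)$ downwards from a large shift to the infimal shift $\zeta^*$ at which it still dominates $\phi^{(2)}$; at $\zeta^*$ the two entire bounded solutions touch, so the strong maximum principle applied to their difference forces $\phi^{(1)}(\cdot+\zeta^*,\cdot)\equiv\phi^{(2)}$, unless the contact occurs only ``at infinity'', i.e.\ along sequences with $x\cdot e-c^*(e)t\to\pm\infty$ — and this is ruled out because near $0$ and near $1$ the difference obeys a linear parabolic inequality, by Hypothesis~\ref{hypbis}, so a contact at infinity would propagate inwards. This is the uniqueness up to shift in $z$ (equivalently in $t$). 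Taking $\phi^{(1)}=\phi^{(2)}=\phi_e$ one gets $\zeta^*=0$ (a nonzero time shift would make $U_e$ periodic in $z$, against~\eqref{ptf2}), that is $\phi_e(t+\tau,\cdot)\ge\phi_e(t,\cdot)$ for all $\tau\ge0$, hence $\partial_t\phi_e\ge0$ and $\partial_zU_e\le0$ (recall $c^*(e)>0$); since $\partial_t\phi_e$ solves the linearised equation and is not identically $0$ — otherwise $\phi_e$ would be a steady state, contradicting~\eqref{ptf2} — the strong maximum principle upgrades these to $\partial_t\phi_e>0$ on $\R\times\R^N$ and $\partial_zU_e<0$ on $\R^N\times\R$. \textbf{This step is the main obstacle.} The weakness of Hypothesis~\ref{hypbis} — $f(x,\cdot)$ merely non-increasing, possibly identically $0$, near $0$ and $1$ — precludes the classical Fife--McLeod exponential-stability shortcut and makes two points delicate: arranging the initial ordering on all of $\R^N$, which must deal with the regions where both fronts approach the same constant state (about which one knows only that perturbations do not grow, possibly at different exponential rates in the two tails), and excluding contact points at infinity using the same weak information.

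\emph{The continuity statements.} Both follow from a compactness-plus-uniqueness scheme. One first establishes uniform bounds $0<\underline c\le c^*(e)\le\overline c$: the upper bound by comparison with a Fisher--KPP-type linear supersolution associated with the inequality $f(x,s)\le(\sup\partial_sf)\,s$, whose spreading speed in the direction $e$ dominates $c^*(e)$ and is bounded uniformly in $e$; the lower bound because, by the previous step, the profiles $U_{e_n}(x,\cdot)$ are decreasing, so a subsequence with $c^*(e_n)\to0$ would — after the normalisation $\phi_{e_n}(0,0)=\mu$, passage to a locally uniform limit by parabolic estimates, and Helly's selection theorem in $z$ — produce a steady state of~\eqref{general} connecting $1$ to $0$ in the direction $e$, i.e.\ a \PTF\ with zero speed, contradicting the uniqueness of the positive speed $c^*(e)$. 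Then, given $e_n\to e$, any subsequential limit of the normalised $\phi_{e_n}$ is an entire solution of~\eqref{general} which, by the uniformity in $e$ of the transition-region estimates, still has the pulsating-front form in the direction $e$, with speed $\lim_n c^*(e_n)$ and the limits~\eqref{ptf2}; uniqueness up to shift together with the normalisation identifies it with $\phi_e$, and, the limit being independent of the subsequence, the whole family converges — in $L^\infty_{loc}(\R\times\R^N)$ for $\phi_e$, and, using periodicity in $x$ together with the change of variables (a diffeomorphism uniformly in $e$, because $c^*(e)\ge\underline c>0$), in $L^\infty(\R^N\times\R)$ for $U_e$. The delicate point here is again the uniformity in $e$ of the control on the transition region, which is what prevents the limit from degenerating to a constant.
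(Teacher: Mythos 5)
Your first step (the exponential bound) is essentially the paper's Step~2: the same exponentially conjugated periodic eigenvalue problem, the same expansion $k_e(\lambda)=o(\lambda)$ using $\dv q=0$ and $\int_{(0,1)^N}q=0$, and the same half-space comparison. The genuine gaps are in the other two steps. For monotonicity and the two uniqueness statements, your opening move --- ``shifting $\phi^{(2)}$ far towards $1$ one arranges $\phi^{(1)}(t_0,\cdot)\le\phi^{(2)}(t_0+s_0,\cdot)$ on all of $\R^N$'' --- does not work: a time shift pushes $\phi^{(2)}$ towards $1$ only where $x\cdot e-c_2t$ stays bounded above, while in the right tail both fronts are close to $0$ and cannot be ordered pointwise without knowing their relative decay rates (exactly the difficulty you flag but do not resolve). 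The correct mechanism is to dispense with global initial ordering altogether and use comparison principles in moving half-spaces whose hypotheses are asymmetric --- ordering on the boundary hyperplane, $\underline u\le\delta$ on the right half-space (resp.\ $\overline u\ge1-\delta$ on the left), plus a one-sided condition at infinity --- which is what Lemmata~\ref{lem1}--\ref{lem3} provide; the paper in fact does not reprove monotonicity and uniqueness at all but invokes \cite{BH3} (Theorems~1.11, 1.12, 1.14), only upgrading $\partial_t\phi_e\ge0$ to $>0$ by the strong maximum principle as you do.

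For the continuity statements your compactness-plus-identification scheme has the gap you yourself name and leave open: nothing prevents the locally uniform limit of the normalised $\phi_{e_n}$ from degenerating (losing the limits \eqref{ptf2}), because under Hypothesis~\ref{hypbis} the state $1$ may be degenerately stable and no uniform-in-$n$ control of the left transition zone is available. The paper closes this in two different ways. For $e\mapsto c^*(e)$ it does not use compactness of the $\phi_{e_n}$ at all: it evaluates the \emph{fixed} profile $U_e$ along the phase $x\cdot e_n-c^*(e_n)t$, shows via parabolic Harnack applied to $\partial_t\phi_e$ that the resulting function is a strict super- (or sub-) solution when $c^*(e_n)-c^*(e)$ stays bounded away from $0$, and then gets a contradiction from the contact point produced by Lemma~\ref{lem3} and the strong maximum principle. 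For $e\mapsto\phi_e$ it does use compactness, but anchors the limit by normalising each front at the level $1-\delta$ (the times $t_n$ in \eqref{deftn}), so the limit $u$ satisfies $u\ge1-\delta$ on a left half-space; the Liouville-type Lemma~\ref{lem>0bis} (itself resting on Lemma~\ref{lem>0}, which extracts positivity of $\max_xf(\cdot,s)$ on $[1-\delta,1)$ from the mere existence of fronts with positive speed) then forces $u\to1$ as $x\cdot e-c^*(e)t\to-\infty$, and Lemma~\ref{lem3} --- which only needs the right-tail decay of the \emph{known} front $\phi_e$, not of $u$ --- identifies $u$ with a shift of $\phi_e$. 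These ingredients (the level-$(1-\delta)$ normalisation, the Liouville lemma, and the asymmetric sliding comparison) are precisely what your proposal is missing.
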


As a matter of fact, the continuity property in Proposition~\ref{pro:U} generalizes some earlier results established for reactions of ignition type~\eqref{fignition}~\cite{AG} or strongly bistable reactions of type~\eqref{bistable2}~\cite{DLL}. The $C^2$ smoothness of the speeds $c^*(e)$ with respect to $e$ is also derived in~\cite{G} in the bistable case, under further assumptions on the coefficients.

\paragraph{The invasion property.} 

An important property entailed by Hypotheses~\ref{hyp:comb-bi}-\ref{hyp:c*>0} is the local-in-space convergence of $u(t,\cdot)$ to $1$ as $t\to+\infty$ when $u_0$ is ``large" enough on a large enough ball, namely~\eqref{invading} below. That property follows from~\cite{DR}, where the condition~${\max_{\R^N}\!f(\cdot,s)>0}$ for all $s\in[1-\delta,1)$ was used (this condition follows from Hypothesis~\ref{hyp:comb-bi}, and even $f(x,s)>0$ for all $(x,s)\in\R^N\times[1-\delta,1)$). But we show in Section~\ref{sec41} that the property~$\max_{\R^N}f(\cdot,s)>0$ for all $s\in[1-\delta,1)$ is satisfied as well when Hypothesis~\ref{hyp:comb-bi} is replaced by the weaker Hypothesis~\ref{hypbis}, still together with Hypothesis~\ref{hyp:c*>0}, and we then get the following result.

\begin{proposition}\label{pro:c>0invasion}
Under Hypotheses~$\ref{hyp:c*>0}$ and~$\ref{hypbis}$, there exist $\theta\in(0,1)$ and $\rho>0$ such that
\Fi{invading}
\Big(u_0\geq \theta\ \hbox{ in~$B_\rho$}\Big)\ \implies\ \Big(u(t,\cdot)\to1\ \hbox{ as $t\to+\infty$ locally uniformly in $\R^N$}\Big),
\Ff
where $u$ is the solution of~\eqref{general} with initial condition $u_0:\R^N\to[0,1]$.
\end{proposition}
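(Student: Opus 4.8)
The plan is to follow the two‑step route sketched just before the statement; the only genuine work is the second step. \textbf{Step 1 (reduction to a reaction condition).} By~\cite{DR}, the implication~\eqref{invading} holds, with suitable $\theta\in(0,1)$ and $\rho>0$, as soon as $\max_{\R^N}f(\cdot,s)>0$ for every $s\in[1-\delta,1)$; the only further ingredient used in~\cite{DR}, namely that $f(x,\cdot)$ be non-increasing in a right neighborhood of~$0$, is already part of Hypothesis~\ref{hypbis}. So it suffices to establish this strict positivity under Hypotheses~\ref{hyp:c*>0} and~\ref{hypbis}. I would argue by contradiction: if $f(x,s_0)\le 0$ for all $x\in\R^N$ and some $s_0\in[1-\delta,1)$, then, since $f(x,\cdot)$ is non-increasing on $[1-\delta,1]$ and $f(x,1)=0$, one gets $0=f(x,1)\le f(x,s)\le f(x,s_0)\le 0$ for all $x\in\R^N$ and $s\in[s_0,1]$, hence $f\equiv 0$ on $\R^N\times[s_0,1]$ (and $s_0$ is a steady state of~\eqref{general}).

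\textbf{Step 2 (contradicting Hypothesis~\ref{hyp:c*>0}).} Fix any $e\in\Sph$ and the associated \PTF\ $\phi_e(t,x)=U_e(x,x\cdot e-c^*(e)t)$, and write $c:=c^*(e)>0$. Since $z\mapsto U_e(x,z)$ is strictly decreasing (Proposition~\ref{pro:U}), by~\eqref{ptf2} and periodicity there is a unique periodic, bounded, $C^1$ function $Z$ with $U_e(x,Z(x))=s_0$, the $C^1$ regularity coming from $\partial_zU_e<0$ via the implicit function theorem. On $D:=\{(x,z):z<Z(x)\}=\{U_e>s_0\}$ the reaction vanishes, so $\phi_e$ solves there the \emph{linear}, divergence-form equation $\partial_tu=\dv(A\nabla u+qu)$ (recall $\dv q=0$); rewriting this in the moving frame, $V:=U_e-s_0$ satisfies in $D$ the divergence-form, degenerate-elliptic equation $\nabla_{x,z}\cdot\big(\widetilde A\,\nabla_{x,z}V+\widetilde q\,V+c\,V\,\mathbf{e}_z\big)=0$, with $\widetilde A=\left(\begin{smallmatrix}A&Ae\\ (Ae)^{\top}&Ae\cdot e\end{smallmatrix}\right)$, $\widetilde q=(q,q\cdot e)$, and $\mathbf{e}_z$ the last coordinate vector of~$\R^{N+1}$; crucially this equation carries \emph{no zeroth-order term}, so $s_0$ could indeed be subtracted freely. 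I would then integrate this identity over the truncated moving slab $D_n:=\{(x,z):x\in(0,1)^N,\ -n<z<Z(x)\}$ and apply the divergence theorem. Periodicity cancels the lateral fluxes. On the top $\{z=Z(x)\}$ one has $V=0$ (so the transport terms drop) and $V>0$ just inside, hence $\nabla_{x,z}V$ is normal there with nonpositive outward normal component, so the outgoing flux $(\widetilde A\,\nabla_{x,z}V)\cdot\mathbf{n}$ is $\le 0$ (a Hopf-type sign, using that $\widetilde A$ is positive semidefinite). On the bottom $\{z=-n\}$, using $U_e\to1$ and $\nabla_{x,z}U_e\to0$ uniformly in $x$ as $z\to-\infty$ (from~\eqref{ptf2} and the uniform $C^2$ bounds on $U_e$) and $\int_{(0,1)^N}q=0$, the outgoing flux converges as $n\to+\infty$ to $-(1-s_0)\big(\,\int_{(0,1)^N}q\cdot e+c\,\big)=-c(1-s_0)<0$. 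Since the total flux is $0$ for every $n$, the bottom flux is $\ge 0$ for every $n$: a contradiction. This yields the positivity needed in Step~1, hence the proposition via~\cite{DR}.

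\textbf{Expected main obstacle.} Everything hinges on the flux identity of Step~2: one must rewrite the reaction-free equation correctly in the moving frame, check it has no zeroth-order term (this is where $\dv q=0$ and $\int_{(0,1)^N}q=0$ are used), and pin down the signs of the two boundary fluxes. The positivity $c^*(e)>0$ is used decisively — for $c\le 0$ the two fluxes would merely balance and no contradiction would arise, which is precisely why Hypothesis~\ref{hyp:c*>0} cannot be relaxed to the mere existence of a (possibly stationary) front. The remaining points (the $C^1$ regularity of $Z$, the interior smoothness of $U_e$ on $D$, and the decay of $\nabla_{x,z}U_e$ at $z=-\infty$) are routine consequences of Proposition~\ref{pro:U}, parabolic regularity, and interpolation.
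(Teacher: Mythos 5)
Your proposal is correct, and your Step~2 is a genuinely different argument from the one the paper gives. Both proofs reduce the proposition to showing $\max_{\R^N}f(\cdot,s)>0$ for every $s\in[1-\delta,1)$ (the paper's Lemma~\ref{lem>0}), and both obtain this by contradiction by assuming $f\equiv0$ on $\R^N\times[s_0,1]$ for some $s_0\in[1-\delta,1)$ and contradicting the existence of the pulsating front $\phi_e$ with $c^*(e)>0$. You should be aware, though, that your Step~1 compresses the paper's reasoning: the precise hypothesis in~\cite{DR} is the pointwise condition $f(x,s)>0$ for all $x$ and $s\in[\theta,1)$, and the paper replaces it by $\max_{\R^N}f(\cdot,s)>0$ only after isolating the single place where~\cite{DR} uses that hypothesis — a Liouville-type property (Lemma~\ref{lem>0bis}, which it derives from Lemma~\ref{lem>0} via a monotone-iteration argument). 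That intermediate step is missing from your write-up, although it is small and routine.

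Where your proof truly diverges is in how the contradiction is produced. The paper, on the moving slab $E=\{c^*(e)t/3\le x\cdot e\le c^*(e)t+M\}$, builds a supersolution of the linear equation of the form $1-\varsigma+\varsigma\,\varphi(x)\,e^{-\lambda(x\cdot e-c^*(e)t/3)}/\min\varphi$, where $\varphi$ is the periodic principal eigenfunction of the $\lambda$-twisted operator; comparison on $E$ and the limit $t\to+\infty$ give $1\le 1-\varsigma$. This hinges on Lemma~\ref{lemkelambda} ($k_e(\lambda)=o(\lambda)$), which in turn uses $\dv q=0$ and $\int_{(0,1)^N}q=0$. Your argument instead recasts the (reaction-free) parabolic equation for $U_e$ on $\{z<Z(x)\}$ as a divergence-free identity $\nabla_{x,z}\cdot\bigl(\widetilde A\,\nabla_{x,z}V+\widetilde q\,V+cV\,\mathbf e_z\bigr)=0$ for $V=U_e-s_0$ (correct, and the zeroth-order terms drop precisely because $\dv q=0$ and $q$ is independent of $z$), and then obtains the contradiction by the divergence theorem on the truncated periodicity cell $D_n$. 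The top flux is $\le0$ because $\nabla_{x,z}V$ is normal to $\{V=0\}$ with nonpositive outward component and $\widetilde A=\bigl(\begin{smallmatrix}A&Ae\\(Ae)^\top&eAe\end{smallmatrix}\bigr)$ is positive semidefinite (indeed $(v,w)^\top\widetilde A(v,w)=(v+we)^\top A(v+we)$); the bottom flux tends to $-(1-s_0)\bigl(\int_{(0,1)^N}q\cdot e+c\bigr)=-c(1-s_0)<0$, using $\int_{(0,1)^N}q=0$ together with $U_e\to1$ and (by interpolation from the uniform $C^2$ bounds) $\nabla_{x,z}U_e\to0$ as $z\to-\infty$. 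This is a flux-balance argument rather than a comparison argument; it avoids the principal-eigenvalue machinery of Lemma~\ref{lemkelambda} altogether, at the cost of requiring the $C^1$ graph $Z$ and a Lipschitz domain for the divergence theorem. You have also correctly located where $c^*(e)>0$ is used: the contradiction disappears if $c\le0$, which is consistent with the paper's remark that the invasion property fails as soon as a front with nonpositive speed exists. Both approaches are sound; the paper's has the advantage that Lemma~\ref{lemkelambda} is needed elsewhere (in the proofs of Lemmas~\ref{lem:interior}--\ref{lem:exterior}), so it recycles machinery, whereas yours is more self-contained and, I would say, more transparent about the role of the two structural conditions $\dv q=0$ and $\int_{(0,1)^N}q=0$.
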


\begin{definition}\label{definvasion}
Throughout the paper, the existence of $\theta\in(0,1)$ and $\rho>0$ for which~\eqref{invading} holds is called the invasion property for~\eqref{general}.
\end{definition}

We point out that if instead of Hypothesis~$\ref{hyp:c*>0}$, the equation~\eqref{general} admits a \PTF\ connecting~$1$ to~$0$ in a direction $e\in\Sph$ with a speed $c^*(e)\leq0$, then the invasion property necessarily fails, as a consequence of the comparison principle. By~\cite[Remark 2]{DR}, the invasion property and the periodicity of~\eqref{general} imply that invading solutions $u$ of~\eqref{general} spread at least with a positive linear speed in all directions:
\Fi{spreading}
\exists\,\gamma>0,\quad\min_{x\in\overline{B_{\gamma t}}}u(t,x)\to1\as\!t\to+\infty.
\Ff

For the homogeneous equation~\eqref{homo}, the invasion property is equivalent to 
\Fi{theta}
\hbox{there is }\theta\in(0,1)\hbox{ such that }f>0\hbox{ in }[\theta,1),\ \hbox{ and }\int_s^1\!\!f>0\hbox{ for all }s\in[0,1),
\Ff
and $\theta$ in~\eqref{theta} and in~\eqref{invading} can be defined as the same real number~\cite{DP,P4}. The invasion property is therefore satisfied in particular if $f\ge0$ in $[0,1]$ and if $f$ is positive in a left neighborhood of $1$. It also holds if~$f$ is of ignition type~\eqref{ignition} or of bistable type~\eqref{bistable} with $\int_0^1f>0$ and $\theta$ in~\eqref{invading} can be any real number in the interval $(\alpha,1)$, with $\rho>0$ large enough, depending on $\theta$, see also~\cite{AW,FM}. For~\eqref{homo}, a sufficient condition for the invasion property is that~\eqref{eqvarphi} admits a solution $(c,\varphi)$, without requiring here Hypothesis~\ref{hyp:comb-bi}, see~\cite[Proposition~1.3]{HR1}. The invasion property actually does not imply Hypotheses~\ref{hyp:comb-bi} or~\ref{hyp:c*>0}. For instance, if $f>0$ in~$(0,1)$, then the invasion property and Hypothesis~\ref{hyp:c*>0} hold for~\eqref{homo}, but Hypothesis~\ref{hyp:comb-bi} does not. Again  for~\eqref{homo}, if $f$ of multistable type, one could have that the invasion property and Hypothesis~\ref{hyp:comb-bi} hold, but Hypothesis~\ref{hyp:c*>0} does not (when instead of a single traveling front there is a stack of fronts) by~\cite{FM}.

For the general periodic equation~\eqref{general}, the invasion property holds when ${\partial_sf(\cdot,0)>0}$ in $\R^N$ and $f$ is of the generalized positive case
\Fi{fpositive}
0\le\min_{\R^N}f(\cdot,s)<\max_{\R^N}f(\cdot,s)\hbox{ for every $s\in(0,1)$},
\Ff
see~\cite{BHN,BHN1,FG,W} (here $\theta$ can be arbitrary in $(0,1)$ and $\rho>0$ can be arbitrary). Notice that Hypothesis~\ref{hyp:comb-bi} does not hold in this case. From~\cite{DR}, the invasion property also holds when $f$ is of the generalized ignition type~\eqref{fignition} with $f>0$ in $\R^N\times(\alpha,1)$. Under Hypothesis~\ref{hyp:comb-bi}, the invasion property holds with $f$ of type~\eqref{fignition}, from~\cite{BH1} and Proposition~\ref{pro:c>0invasion}. Lastly, the invasion property is fulfilled as well when $f\ge,\not\equiv0$  in ${\R^N\times[0,1]}$ and the principal part of the operator is in the non-divergence form $\Tr(A(x)D^2u)$ and~$q$ is not too large (not necessarily divergence free nor with $0$ average)~\cite{DR}. For reactions~$f$ which may take negative values, the proposition below provides a sufficient condition for the invasion property to hold.

\begin{proposition}\label{proinvasion}
Let $g:[0,1]\to\R$ be a $C^1$ function such that $g(0)=g(1)=0$ and satisfying~\eqref{theta} and $g'(1)<0$. Let $A_0=(A_{ij})_{1\le i,j\le N}$ be a constant symmetric positive definite matrix. There is $\epsilon>0$ such that, if
$$\max_{1\le i,j\le N}\|a_{ij}-A_{ij}\|_{C^1(\R^N)}+\max_{1\le i\le N}\|q_i\|_{L^\infty(\R^N)}\le\epsilon$$
and $f(x,s)\ge g(s)$ for all $(x,s)\in\R^N\times[0,1]$, then the invasion property holds for~\eqref{general}.
\end{proposition}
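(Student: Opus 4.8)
\textbf{Proof proposal for Proposition~\ref{proinvasion}.}

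The plan is to reduce the invasion property for the perturbed equation~\eqref{general} to the known invasion property for a homogeneous equation of the type~\eqref{homo} associated with the nonlinearity $g$, and to propagate this via the comparison principle after a careful choice of a compactly supported subsolution. Since $g$ is $C^1$, satisfies $g(0)=g(1)=0$, condition~\eqref{theta}, and $g'(1)<0$, the discussion following Definition~\ref{definvasion} (relying on~\cite{DP,P4}) tells us that the homogeneous equation $\partial_tv=\Delta v+g(v)$ has the invasion property: there exist $\theta_0\in(0,1)$ and $\rho_0>0$ such that $v_0\ge\theta_0$ in $B_{\rho_0}$ forces $v(t,\cdot)\to1$ locally uniformly. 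Actually, for the comparison argument it is cleaner to use a standard \emph{generalized (energy/compactly supported) subsolution}: there is a smooth compactly supported function $\underline v_0:\R^N\to[0,1)$, with $\underline v_0\le\theta$ somewhere below $1$ but with $\int_0^1 g>0$ ensuring invasion, such that the solution of the homogeneous $g$-equation starting from $\underline v_0$ is time-increasing and converges to $1$; equivalently one builds a stationary or moving compactly supported strict subsolution $\psi$ of $\Delta\psi+g(\psi)\ge0$ with $\psi\not\equiv0$, as is classical for bistable/ignition type with positive mass (see~\cite{AW,FM}). Call $\rho$ the radius of a ball containing $\supp\psi$ and $\theta:=\max\psi<1$; these will be the constants in~\eqref{invading}.

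The main step is then to show that, for $\epsilon$ small enough, $\psi$ (or a slight modification of it) is still a subsolution for the \emph{perturbed} operator $\dv(A(x)\nabla\cdot)+q(x)\cdot\nabla\cdot+f(x,\cdot)$. Write the perturbed elliptic operator acting on $\psi$ as
\[
\dv(A\nabla\psi)+q\cdot\nabla\psi+f(x,\psi)
=\Delta\psi+g(\psi)
+\dv\big((A-A_0)\nabla\psi\big)
+q\cdot\nabla\psi
+\big(f(x,\psi)-g(\psi)\big),
\]
using that $A_0$ is constant so $\dv(A_0\nabla\psi)=\Delta\psi$ up to a linear change of variables diagonalizing $A_0$ (which I would perform first, at the cost of replacing $\psi$ by $\psi$ composed with a fixed linear map — this does not affect compact support). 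The last term is $\ge0$ by the hypothesis $f(x,s)\ge g(s)$. The two middle terms are bounded in $L^\infty$ by $C\epsilon$, where $C$ depends only on $\|\psi\|_{C^2}$ and $A_0$, since $\|a_{ij}-A_{ij}\|_{C^1}+\|q_i\|_{L^\infty}\le\epsilon$. Meanwhile $\Delta\psi+g(\psi)$ is bounded \emph{below} by a positive constant on the region where $\psi$ is ``in the middle'', but it can be negative near $\partial\supp\psi$ where $\psi$ is small; the standard fix is to choose $\psi$ to be a \emph{strict} subsolution with a quantitative margin, or to use the classical two-parameter family of subsolutions (amplitude slightly below a value where $g>0$, support large) so that $\Delta\psi+g(\psi)\ge c_0>0$ on the bulk and $\ge -C_1$ only on a thin shell, then absorb the $C\epsilon$ perturbation by taking $\epsilon$ small and, if needed, slightly shrinking the amplitude. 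This is the delicate bookkeeping point: one must ensure the perturbation $C\epsilon$ is dominated uniformly, which forces $\psi$ to be fixed \emph{before} $\epsilon$, hence the order of quantifiers ``there is $\epsilon>0$ such that \dots'' in the statement.

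Once $\psi$ is a (generalized) subsolution of the stationary perturbed equation with $0\le\psi\le\theta<1$ and $\supp\psi\subset B_\rho$, the conclusion follows: if $u_0\ge\theta\ge\psi$ in $B_\rho$ (and $u_0\ge0=\psi$ outside), the parabolic comparison principle gives $u(t,\cdot)\ge \underline u(t,\cdot)$ where $\underline u$ is the solution of the perturbed equation with initial datum $\psi$; since $\psi$ is a subsolution, $\underline u$ is nondecreasing in $t$ and converges as $t\to+\infty$ to a stationary solution $\overline\psi$ of the perturbed equation with $\psi\le\overline\psi\le1$. A sliding/strong-maximum-principle argument, together with the linear stability of $1$ encoded in $g'(1)<0$ (inherited by $f$ through $f(x,\cdot)\ge g$ near $1$, giving $\partial_sf(x,1)\le g'(1)<0$), rules out any such stationary solution strictly below $1$ — more precisely, the invasion property for the homogeneous $g$-equation and a further comparison $\underline u\ge v$ with $v$ the $g$-solution forces $\overline\psi\equiv1$, hence $u(t,\cdot)\to1$ locally uniformly, which is exactly~\eqref{invading}. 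I expect the genuinely load-bearing part to be the uniform-in-$\epsilon$ subsolution construction in the previous paragraph; the comparison and the passage to the limit are routine once that is in place.
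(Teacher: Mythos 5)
Your overall strategy is the one the paper takes: build a compactly supported subsolution for the constant-coefficient $A_0$-equation with nonlinearity $g$, transport it by the linear change of variables diagonalizing $A_0$, use a uniform quantitative margin to absorb the $O(\epsilon)$ perturbations coming from $A-A_0$ and $q$, then run a monotone-in-time comparison and identify the stationary limit. However, two load-bearing steps are either missing or would not go through as written.

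First, the uniform margin. You correctly flag that $\Delta\psi+g(\psi)$ can be negative on a shell near $\partial\supp\psi$, but the fix you propose (``slightly shrinking the amplitude'') does not help when $g$ is bistable: any nonnegative compactly supported $\psi$ must traverse the range $(0,\alpha)$ where $g<0$, and lowering the amplitude does not create a margin there. The paper resolves this with a specific device: extend $g$ to be continuous and decreasing on $(-\infty,0]$, pick $\eta>0$ small with $g(a_\eta)=g(b_\eta)=\eta$, $a_\eta<0<b_\eta$ and $g-\eta$ still of type~\eqref{theta} between $a_\eta$ and $b_\eta$, and solve (via the variational method of~\cite{BL}) the Dirichlet problem $\Delta\varphi+g(\varphi)=\eta$ on a ball with boundary value $a_\eta<0$. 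The constant right-hand side $\eta>0$ gives a strict-subsolution margin uniformly on the whole ball (including where $\varphi<0$), which survives an $O(\epsilon)$ perturbation, and one then works with the generalized subsolution $v_0:=\max(\psi,0)$. Second, the closing step. The comparison ``$\underline u\ge v$ with $v$ the $g$-solution'' followed by an appeal to the invasion property of the homogeneous $g$-equation fails: $\underline u$ solves the perturbed equation with second-order operator $\dv(A\nabla\cdot)+q\cdot\nabla\cdot$, while the homogeneous $g$-equation has $\Delta$, and there is no comparison principle between solutions of equations with different second-order parts. What the paper does instead is compare $u$ with the solution $v$ of the \emph{perturbed} diffusion-advection operator with $g$ as reaction (this comparison is legitimate since $f\ge g$ and the second-order parts coincide), and then prove $v(t,\cdot)\to1$ by an independent argument rather than by citing the homogeneous invasion property: a sliding method using the translated strict subsolution inequality (uniform in the shift, which is why the uniform margin matters) forces the monotone limit $v_\infty$ to exceed $\psi(0)>c_\eta$ everywhere, and the ODE $\zeta'=g(\zeta)$, $\zeta(0)=\psi(0)$, $\zeta(+\infty)=1$ concludes. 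Your passing mention of a ``sliding/strong-maximum-principle argument'' points toward the right idea, but the alternative you present as the precise version is invalid and the sliding argument is not actually carried out.
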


Proposition~\ref{proinvasion} shows that the invasion property holds in the case of almost-constant diffusion and almost-zero drift coefficients, provided that the reaction is larger than a homogeneous unbalanced bistable term. However, as for~\eqref{homo}, the invasion property does not hold in general without the condition $f(x,s)\ge g(s)$ and $g$ satisfying~\eqref{theta}. For the periodic equation~\eqref{general}, typically with a bistable structure as in~\eqref{bistable2}, heterogeneities with large amplitude or large period can also rule out the invasion property~\cite{DHZ1,DLBL,E1,E2,HFR,N,X3,XZ}.


\subsection{The \ais: properties and examples}\label{sec23}

\paragraph{General properties.}

In Definition~\ref{def:W}, the requirement that $\W$ coincides with the interior of its closure guarantees the uniqueness of the \ais, when it exists. The next result contains a characterization of $\W$. From~\cite[Remark~2]{DR}, under the invasion property, any invading solution $u$ satisfies~\eqref{spreading}, whence its \ais\ $\W$, if any, contains a ball $B_\gamma$ centered at the origin. Despite the fact that $\W$ may not be convex in general, it turns out that it contains the convex hull of the union of $B_\gamma$ with any other point $\xi\in\W$. This property leads to the Lipschitz regularity of $\W$. All these results are summarized in the following statement.

\begin{proposition}\label{pro:Wchar}
If a solution $u$ to~\eqref{general} admits an \ais\ $\W$, then
\Fi{Wint}
\W=\inter\,\big\{x\in\R^N:u(t, tx)\to1\hbox{ as }t\to+\infty\big\}.
\Ff
Moreover, under the invasion property, $\W$ is star-shaped with respect to the origin and~$\partial \W$ $($if non-empty$)$ is Lipschitz-regular. More precisely, letting $\gamma>0$ be such that~\eqref{spreading} holds for the solution of~\eqref{general} with initial datum $\theta\,\1_{B_\rho}$, where $\theta$ and $\rho$ are as in Definition~$\ref{definvasion}$, then, for any ${z}\in\partial\W$, the following estimates hold:
\Fi{inner-cone}
\forall\,\lambda\in[0,1),\quad B_{(1-\lambda)\gamma}(\lambda{z})\subset\W,
\Ff
\Fi{outer-cone}
\forall\,\lambda\in(1,+\infty),\quad B_{(\lambda-1)\gamma}(\lambda{z})\subset \R^N\setminus\W.
\Ff
\end{proposition}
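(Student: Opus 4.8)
The plan is to establish the three assertions in order. For the characterization~\eqref{Wint}, I would first observe that the set $G:=\{x\in\R^N:u(t,tx)\to1\text{ as }t\to+\infty\}$ contains $\W$: indeed, given $x\in\W$, by openness of $\W$ the singleton $\{x\}$ is a compact subset of $\W$, so the first line of~\eqref{ass-cpt} gives $u(t,tx)\to1$. Since $\W$ is open, $\W\subset\inter G$. For the reverse inclusion I would argue that $\inter G\subset\ol{\W}$, which combined with the hypothesis $\W=\inter\ol{\W}$ yields $\inter G\subset\inter\ol{\W}=\W$. Suppose $x_0\in\inter G\setminus\ol{\W}$; then $x_0\in\R^N\setminus\ol{\W}$, which is open, so a small ball around $x_0$ lies in $\R^N\setminus\ol{\W}$ and is a compact neighborhood on which the second line of~\eqref{ass-cpt} forces $u(t,tx_0)\to0$, contradicting $x_0\in G$. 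This settles~\eqref{Wint}.

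For the star-shapedness and the cone estimates, the key tool is the invasion property together with the spreading estimate~\eqref{spreading}. Fix $z\in\partial\W$ and $\lambda\in[0,1)$; I want to show $B_{(1-\lambda)\gamma}(\lambda z)\subset\W$. The idea is a moving-ball/comparison argument: since $z\in\partial\W$, approximate from inside by points $z'\in\W$ close to $z$ with $u(t,tz')\to1$; then at a large time $t_0$ the solution $u(t_0,\cdot)$ exceeds $\theta$ on a ball of radius $\rho$ around $t_0 z'$. Placing a subsolution of the form $\theta\,\1_{B_\rho}$ centered there and invoking the invasion property plus~\eqref{spreading} (applied to the translated/rescaled problem, using periodicity of the coefficients so that the constants $\theta,\rho,\gamma$ are translation-uniform along $\Z^N$, with an extra uniform room to absorb the non-integer shift), one gets that $u$ at time $t$ is close to $1$ on a ball of radius $\gamma(t-t_0)$ around (approximately) $t_0 z'$. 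Rescaling by $t$ and letting $t\to+\infty$ (and then $z'\to z$, $t_0$ fixed), the set of points $x$ with $u(t,tx)\to1$ contains, in the limit, the ball $B_\gamma(0)$ "seen from" $z$ — more precisely, it contains $\{(1-s)z + s\,y : s\in[0,1],\ |y|<\gamma\}=\Conv(\{z\}\cup B_\gamma)$ minus its boundary, which is exactly $\bigcup_{\lambda\in[0,1)}B_{(1-\lambda)\gamma}(\lambda z)$. By~\eqref{Wint} this union lies in $\W$, giving~\eqref{inner-cone}; its consequence is that $\W$ is star-shaped with respect to $B_\gamma$ (hence to the origin) and that $\partial\W$ satisfies a uniform interior cone condition, i.e.\ is Lipschitz.

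For~\eqref{outer-cone}, I would run the symmetric argument ``from outside'': if for some $\lambda>1$ the ball $B_{(\lambda-1)\gamma}(\lambda z)$ met $\W$, pick $\xi$ in that intersection, apply the already-proven~\eqref{inner-cone} at a boundary point on the segment from the origin (or from $B_\gamma$) through $\xi$ — or directly use star-shapedness with respect to $B_\gamma$: the convex hull of $B_\gamma$ and $\xi$ lies in $\W$, and one checks by elementary geometry that this convex hull contains $z$ in its interior, contradicting $z\in\partial\W$. This gives the uniform exterior cone condition and completes the Lipschitz regularity.

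The main obstacle I anticipate is making the inside-approximation comparison argument genuinely uniform: one must transfer the invasion estimates~\eqref{invading}--\eqref{spreading}, which are stated for balls centered at the origin, to balls centered at the moving points $t_0 z'\in\R^N$, and control the error coming from the fact that $t_0 z'$ is not a lattice point. This is handled by periodicity (the constants $\theta,\rho$ in the invasion property are $\Z^N$-translation invariant, and the linear spreading speed $\gamma$ is direction-independent by~\eqref{spreading}), choosing $\rho$ slightly larger to swallow a unit-size shift onto the nearest lattice point, and then passing to the rescaled limit carefully so that the $O(1)$ spatial errors disappear after division by $t$. Once this uniformity is in place, the geometric conclusions~\eqref{inner-cone}--\eqref{outer-cone} and the Lipschitz regularity of $\partial\W$ follow from the stated convexity/cone computations.
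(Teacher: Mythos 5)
Your handling of \eqref{Wint} is correct and essentially identical to the paper's (both inclusions via the two lines of \eqref{ass-cpt} applied to singletons/small closed balls, then intersecting with $\inter\ol{\W}=\W$), and your overall strategy for the cone estimates is also the paper's: restart the invasion at a large time from a point $t_0z'$ deep inside the rescaled set, use $\Z^N$-periodicity and an enlarged radius (the paper uses $B_{\rho+\sqrt N}(\tau\xi)$ and a lattice point $h_\tau$ with $h_\tau-\tau\xi\in[0,1]^N$) to invoke \eqref{spreading} for the translated problem, and then rescale; the deduction of \eqref{outer-cone} from the interior estimate via $|z-\xi/\lambda|<(1-1/\lambda)\gamma$ is likewise the same elementary geometry as in the paper.

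There is, however, one step that fails as written: the limiting procedure ``$t\to+\infty$ with $t_0$ fixed'' does not produce $\Conv(\{z\}\cup B_\gamma)$ minus its boundary. After dividing by $t$, the ball on which the spreading estimate makes $u(t,\cdot)$ close to $1$ is $B_{\gamma(1-t_0/t)}(t_0z'/t)$, whose limit as $t\to+\infty$ with $t_0$ fixed is $B_\gamma(0)$; this only yields the case $\lambda=0$ of \eqref{inner-cone}, not the intermediate balls $B_{(1-\lambda)\gamma}(\lambda z)$ with $\lambda\in(0,1)$ that carry the Lipschitz estimate on $\partial\W$. To capture those you must let the restart time grow proportionally to the observation time, i.e.\ take $\tau=\lambda s$ and send $s\to+\infty$ (so $\tau\to+\infty$ as well), checking that $|sx-h_{\lambda s}|/\big(\gamma(1-\lambda)s\big)\to|x-\lambda\xi|/\big((1-\lambda)\gamma\big)<1$ for $x\in B_{(1-\lambda)\gamma}(\lambda\xi)$; the point that makes this legitimate is that the spreading estimate obtained after translating by $(\tau,h_\tau)$ is uniform in $\tau\ge T$, since the same comparison solution with initial datum $\theta\,\1_{B_\rho}$ works for every such $\tau$ by autonomy and periodicity of the equation. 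This proportional coupling of the two times is precisely the device in the paper's auxiliary lemma; once you insert it, the rest of your plan goes through.
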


The two inclusions~\eqref{inner-cone}-\eqref{outer-cone} correspond to an interior and an exterior cone condition, respectively, at any boundary point ${z}\in\partial\W$, with the opening of the cones only depending on $\gamma$ and $|{z}|$. Hence they provide an explicit estimate of the Lipschitz constant of $\partial\W$, showing that $\partial\W$ is locally uniformly Lipschitz-continuous.

It turns out that, when a solution $u$ to~\eqref{general} is initially compactly supported, its \ais\ $\W$, if any, can also be viewed as the large-time limit of the rescaled upper level sets, defined by
$$E_\lambda(t):=\big\{x\in\R^N:u(t,x)>\lambda\big\},\quad\lambda\in(0,1),\ \ t>0.$$

\begin{proposition}\label{pro:level}
If a solution $u$ to~\eqref{general} admits an asymptotic invasion shape $\W$ and if its initial datum is compactly supported, then, for every $\lambda\in(0,1)$,
\be\label{ElambdaW}
\frac{1}{t}E_\lambda(t)\mathop{\longrightarrow}_{t\to+\infty}\W
\ee
in the sense of the Hausdorff distance.\footnote{When $\W=\emptyset$, then~\eqref{ElambdaW} means that, for every $\lambda\in(0,1)$, the sets $E_\lambda(t)$ are empty for $t$ large.}
\end{proposition}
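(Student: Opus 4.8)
The plan is to show the two halves of the Hausdorff-distance convergence separately, exploiting the definition of the \ais\ via $u(t,tx)$ together with the compactness of the support of $u_0$. Write $\W$ for the \ais\ and fix $\lambda\in(0,1)$. Note first that $\frac1t E_\lambda(t)=\{x\in\R^N:u(t,tx)>\lambda\}$, so the claim is really a statement about the rescaled solution $v_t(x):=u(t,tx)$. The asymptotic invasion shape already tells us, loosely, that $v_t\to1$ on compact subsets of $\W$ and $v_t\to0$ on compact subsets of $\R^N\setminus\ol\W$; the content of the proposition is to upgrade this pointwise-on-compacts behaviour, which a priori leaves the transition zone near $\partial\W$ and the far field uncontrolled, to genuine Hausdorff convergence of the super-level sets.

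First I would prove the ``inner'' bound: for every $\eta>0$, eventually $\frac1t E_\lambda(t)\supset \{x\in\W:\dist(x,\partial\W)\ge\eta\}$ and moreover $\frac1t E_\lambda(t)\subset \W_\eta:=\{x:\dist(x,\W)<\eta\}$ for $t$ large; equivalently $\sup_{y\in\W}\dist(y,\frac1tE_\lambda(t))\to0$ and $\sup_{x\in\frac1tE_\lambda(t)}\dist(x,\W)\to0$. The first direction follows directly from the first line of~\eqref{ass-cpt} applied to the compact set $C=\{x\in\ol\W:\dist(x,\partial\W)\ge\eta\}$ (using that $\W$ coincides with the interior of its closure, so this exhausts $\W$ as $\eta\downarrow0$): on $C$ we have $u(t,tx)\to1$ uniformly, hence $>\lambda$ for $t$ large, so $C\subset\frac1tE_\lambda(t)$ and every point of $\W$ is within $\eta$ (plus a vanishing error) of $\frac1tE_\lambda(t)$. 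The reverse inclusion $\frac1tE_\lambda(t)\subset\W_\eta$ is where the compact support of $u_0$ enters crucially: the complement $\R^N\setminus\W_\eta$ is \emph{not} compact, so the second line of~\eqref{ass-cpt} does not apply to it directly. Here I would split $\R^N\setminus\W_\eta$ into a compact annular piece $C'=(\ol{B_R}\setminus\W_\eta)$, on which~\eqref{ass-cpt} gives $u(t,tx)\to0$ uniformly, hence $<\lambda$, so $C'\cap\frac1tE_\lambda(t)=\emptyset$; and the far field $\{|x|>R\}$, which I would handle with an a priori speed-of-propagation upper bound. Such a bound --- that $u(t,x)\le\lambda$ for $|x|\ge Rt$ once $R$ is chosen larger than the maximal spreading speed $\max_{e\in\Sph}w(e)$ --- follows from a standard comparison argument: since $u_0$ is compactly supported, say $\supp u_0\subset B_{R_0}$, one compares $u$ from above with a supersolution built from a pulsating front (or a KPP-type exponential supersolution) moving outward, using Proposition~\ref{pro:U} (boundedness of $c^*(e)$, the exponential decay~\eqref{Ueexp}) and Hypothesis~\ref{hyp:comb-bi} near $u=1$; this is essentially the upper bound half of the Freidlin--G\"artner asymptotics and is available from~\cite{R1}. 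Choosing $R>\max_e w(e)$ then forces $E_\lambda(t)\subset B_{Rt}$, i.e. $\frac1tE_\lambda(t)\subset B_R$, so only the compact piece $C'$ matters and we conclude $\frac1tE_\lambda(t)\subset\W_\eta$ for $t$ large.

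Combining the two inclusions, for any $\eta>0$ we get $\dH(\frac1tE_\lambda(t),\W)<\eta$ (up to a further vanishing term from approximating $\W$ by its inner $\eta$-core and noting $\dH(\W,\W_\eta)\le\eta$, and $\ol\W,\W$ have the same closure so distances to them agree), which is exactly~\eqref{ElambdaW}. In the degenerate case $\W=\emptyset$: then $u$ cannot be invading in any direction, the far-field comparison still gives $E_\lambda(t)\subset B_{Rt}$ for some $R$, while $\R^N\setminus\ol\W=\R^N$ and~\eqref{ass-cpt} applied to $C=\ol{B_R}$ gives $\max_{|x|\le R}u(t,tx)\to0<\lambda$, hence $E_\lambda(t)=\emptyset$ for $t$ large, as required by the footnote convention. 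The main obstacle, and the only genuinely non-routine point, is the far-field control: one must rule out ``escaping'' super-level components far outside $t\W$, and since~\eqref{ass-cpt} is only a statement about fixed compact sets it gives nothing there --- this is precisely why the hypothesis of compactly supported initial datum is indispensable in this proposition (and why no analogue is claimed for general $u_0$), and it is resolved by invoking the known uniform upper bound on the spreading speed from~\cite{R1} under Hypotheses~\ref{hyp:comb-bi}--\ref{hyp:c*>0}. Everything else is a routine covering/compactness argument.
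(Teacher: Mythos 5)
Your overall structure (inner/outer bounds plus far-field control via the compactness of $\supp u_0$) is the right idea, but there are two concrete problems with the execution.

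First, the far-field control. You propose to show $\frac1tE_\lambda(t)\subset B_R$ by comparing $u$ from above with a pulsating-front (or KPP-type) supersolution, invoking Proposition~\ref{pro:U}, the exponential decay~\eqref{Ueexp}, Hypothesis~\ref{hyp:comb-bi}, and the Freidlin--G\"artner upper bound of~\cite{R1}. But Proposition~\ref{pro:level} is stated, and is true, \emph{without} Hypotheses~\ref{hyp:comb-bi}--\ref{hyp:c*>0}; see the remark at the end of Section~\ref{sec23}, which explicitly asserts that Proposition~\ref{pro:level} holds under the sole standing assumptions. The paper's far-field estimate is far cruder and more robust: since $f(x,s)\le Ls$ on $\R^N\times(0,1]$ for some finite $L$, one has $0\le u(t,x)\le e^{Lt}v(t,x)$ with $v$ solving the linear advection--diffusion equation with the same compactly supported datum; Gaussian bounds on the fundamental solution then give $\sup_{|x|\ge\Gamma t}u(t,x)\to0$ for a suitable $\Gamma$ (typically much larger than $\max_e w(e)$, but that is irrelevant). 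This uses no front structure whatsoever. Your route, as the primary argument, proves a strictly weaker statement.

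Second, and this is the genuine logical gap, the inner half. You take $C=\{x\in\ol\W:\dist(x,\partial\W)\ge\eta\}$, show $C\subset\frac1tE_\lambda(t)$ for $t$ large, and then assert that ``every point of $\W$ is within $\eta$ (plus a vanishing error) of $\frac1tE_\lambda(t)$''. That deduction requires $\W\subset C+B_\eta$, which is false in general: if $\W$ has a thin feature (say a strip of width $<2\eta$), the $\eta$-erosion $C$ removes it entirely, and interior points of that feature may be arbitrarily far from $C$. A single erosion radius $\eta$ does not suffice. The paper instead fixes $\epsilon>0$ and observes that every point of $\ol\W$ is within $\epsilon$ of some $\mathcal{I}_r:=\{x:\dist(x,\R^N\setminus\W)\ge r\}$ with $r=r(y)>0$ varying; then it extracts a finite subcover $\mathcal{I}_{r_1}+B_\epsilon,\dots,\mathcal{I}_{r_p}+B_\epsilon$ of $\ol\W$ by compactness, applies~\eqref{ass-cpt} to each of the finitely many compacts $\mathcal{I}_{r_j}$, and concludes $\sup_{x\in\W}\dist(x,t^{-1}E_\lambda(t))\le\epsilon$ for $t$ large. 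Without this covering step (or an equivalent uniformity argument), the inner Hausdorff bound does not follow.
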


We point out that~\eqref{ElambdaW} does not hold in general if the initial datum is not compactly supported, see~\cite[Proposition~6.5]{HR1}.\footnote{\cite[Proposition~6.5]{HR1} was concerned with the homogeneous equation~\eqref{homo} for the initial condition $u_0=\1_U$ and $U=\{(x',x_N)\in\R^{N-1}\times\R:x_N\le\sqrt{|x'|}\}$ and the nonlinearity $f(s)=s(1-s)$, but from its proof, the same conclusion holds for instance for a function $f$ of type~\eqref{bistable} with $\int_0^1f>0$.} However, under some additional conditions on $u_0$ and $f$ for~\eqref{homo}, the rescaled upper level sets $t^{-1}E_\lambda(t)$ converge locally to $\W$ in the sense of the Hausdorff distance as $t\to+\infty$, see~\cite[Theorem~2.4]{HR1}.

\begin{remark}
Proposition~$\ref{pro:level}$ and property~\eqref{Wint} of Proposition~$\ref{pro:Wchar}$ hold under the sole standing assumptions of the paper, and even without Hypotheses~$\ref{hyp:comb-bi}$,~$\ref{hyp:c*>0}$ or~$\ref{hypbis}$. Furthermore, the results~\eqref{inner-cone}-\eqref{outer-cone} of Proposition~$\ref{pro:Wchar}$ hold under the sole invasion property. As a consequence, from Proposition~$\ref{pro:c>0invasion}$, the results~\eqref{inner-cone}-\eqref{outer-cone} hold true under Hypotheses~$\ref{hyp:c*>0}$ and~$\ref{hypbis}$. But they also hold under the conditions of Proposition~$\ref{proinvasion}$. 
\end{remark}

\paragraph{Examples of \ais\ for homogeneous equations with $u_0$ compactly supported.}

Firstly, if~\eqref{eqvarphi} has a solution $(c,\varphi)$, then the \ais\ $\W_0$ of any invading solution $u$ to~\eqref{homo} exists and it is equal to
\Fi{WBc*}
\W_0=B_{c^*},
\Ff
where $c^*>0$ is the minimal speed for which~\eqref{eqvarphi} has a solution, see~\cite{AW} under further assumptions on $f$ and~\cite{HR1} in the general case. This result holds even without Hypothesis~\ref{hyp:comb-bi}, and the speed $c$ in~\eqref{eqvarphi} is not unique in general, but the minimal speed $c^*>0$ does exist~\cite{HR1}. With Hypothesis~\ref{hyp:comb-bi},~\eqref{WBc*} can also be viewed as a consequence of~\eqref{formuleFG}.

In the homogeneous case with $q\equiv0$, $f(x,s)=f(s)$ and $A=(A_{ij})_{1\le i,j\le N}$ is a constant (independent of $x$) symmetric positive definite matrix,~\eqref{general} reduces to
\Fi{homobis}
\partial_t u=\sum_{1\le i,j\le N}A_{ij}\partial_{x_ix_j}u+f(u),\quad t>0,\ x\in\R^N.
\Ff
A change of variable shows that the \ais\ $\W_0$ of any invading solution $u$ with compactly supported $u_0$ is an open ellipsoid of the type
$$\W_0=\mathcal{R}(\mathcal{E}),$$
where $\mathcal{R}$ is an orthogonal mapping, $\mathcal{E}:=\big\{x\in\R^N:x_1^2/\lambda_1+\cdots+x_N^2/\lambda_N<(c^*)^2\big\}$ and~$(\lambda_i)_{1\le i\le N}$ are the eigenvalues of the constant matrix $A$.

The \ais\ can also exist and be empty: for instance, for~\eqref{homo} or~\eqref{homobis} in the bistable case~\eqref{bistable}, if $\|u_0\|_{L^\infty(\R^N)}\le\alpha$, or if $\int_0^1f\le0$ and $f'(0)<0$, then $\|u(t,\cdot)\|_{L^\infty(\R^n)}\to0$ as $t\to+\infty$, whence the \ais\ is empty. 

\paragraph{Examples for~\eqref{homo} with unbounded initial support.}

Even for~\eqref{homo} and even with the existence of a pair $(c,\varphi)$ solving~\eqref{eqvarphi} (hence, Hypothesis~\ref{hyp:c*>0} holds), the question of the existence of an \ais\ is much more intricate when the initial datum $u_0$ is no longer compactly supported. Various conditions for the existence of an \ais\ have been given in~\cite{HR1} when $u_0=\1_U$, involving notions of ``bounded" or ``unbounded" directions of~$U$. For instance, if
$$U=\big\{x=(x',x_N)\in \R^{N-1}\times\R:x_N\le\Gamma(x')\big\}$$
with $\Gamma\in L^\infty_{loc}(\R^{N-1})$ satisfying $\Gamma(x')/|x'|\to\alpha\in[-\infty,+\infty]$ as $|x'|\to+\infty$, then the \ais\ $\W$ exists. If $\alpha=+\infty$, then $\mc{W}=\R^N$. If $0<\alpha<+\infty$, then~$\W$ is a shifted cone
\be\label{Walpha}
\mc{W}=\big\{x\in\R^N:x_N< \alpha\,|x'|+c^*\sqrt{1+\alpha^2}\big\},
\ee
that is, $\W=B_{c^*}+\mathcal{C}$, where $\mathcal{C}:=\{x=(x',x_N)\in \R^{N-1}\times\R:x_N\le\alpha|x'|\}$ and $c^*$ is the minimal speed for which~\eqref{eqvarphi} has a solution. Here, $\W$ is neither convex nor $C^1$ (and it does not satisfy the exterior ball condition at its boundary point $(0,\cdots,0,c^*\sqrt{1+\alpha^2})$), see Figure~\ref{fig:cones}~(a) with $\Gamma(x')=\alpha\,|x'|$. If~$\alpha=0$ (this is the case for instance when $\gamma$ is bounded), then~$\W$ is the half-space $\W=\{x\in\R^N:{x_N<c^*}\}$. If~$-\infty<\alpha<0$, then~$\mc{W}=B_{c^*}+\mathcal{C}$ is still the $c^*$-neighborhood of the cone $\mathcal{C}$, but $\mc{W}$ is now convex and $C^1$ (it is not $C^2$ but it satisfies the interior and exterior ball conditions at every boundary point), see Figure~\ref{fig:cones}~(b) with $\Gamma(x')=\alpha\,|x'|$. If $\alpha=-\infty$, then $\mc{W}=\big\{x\in\R^N:{|x'|<c^*},\ {x_N<0}\big\}\cup B_{c^*}$, which is convex and $C^1$, but not $C^2$. 

\begin{figure}[ht]
 \centering
 \subfigure[$\alpha<0$]
   {\includegraphics[width=.45\linewidth]{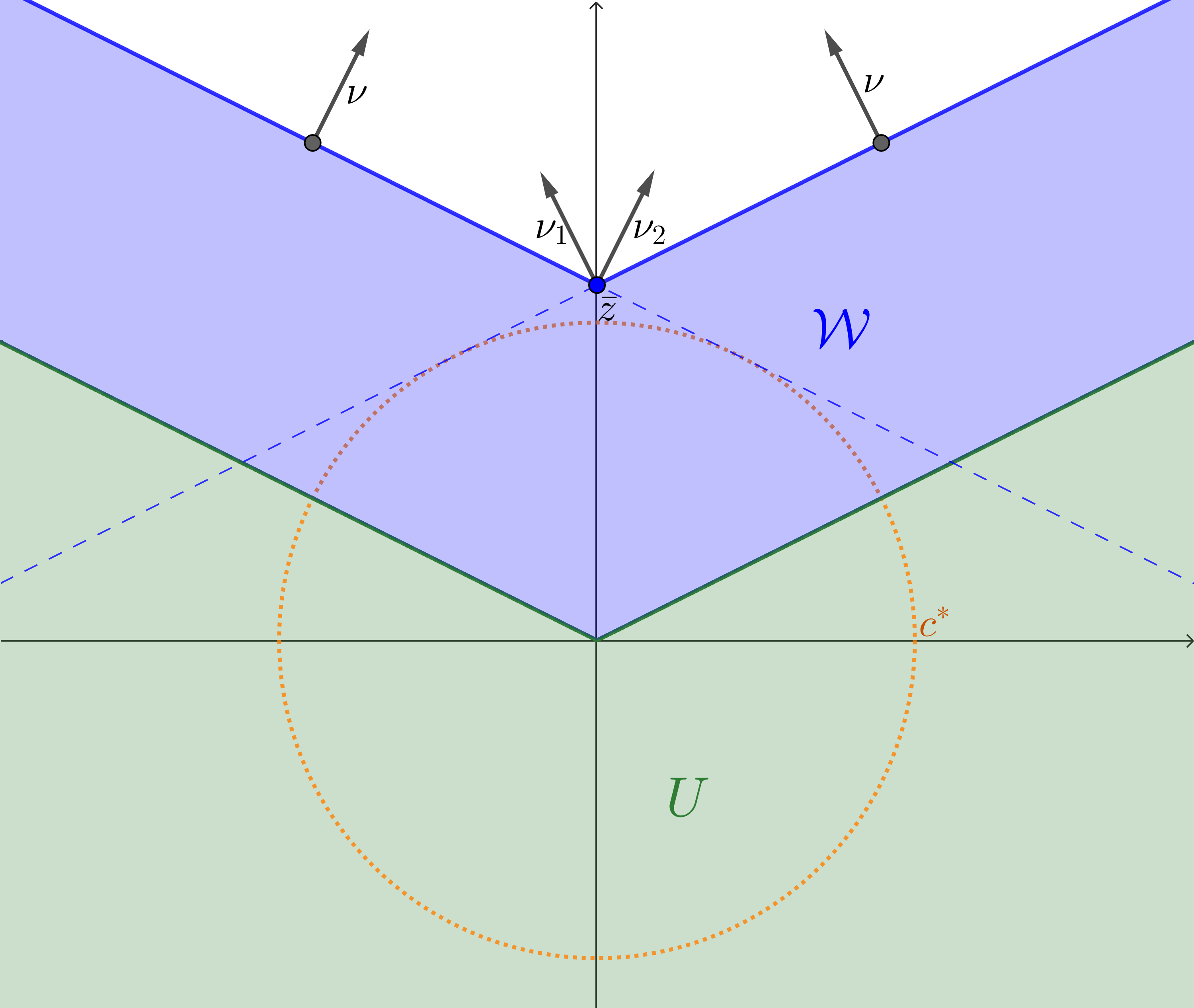}}
 \hspace{8mm}
 \subfigure[$\alpha>0$]
   {\includegraphics[width=.45\linewidth]{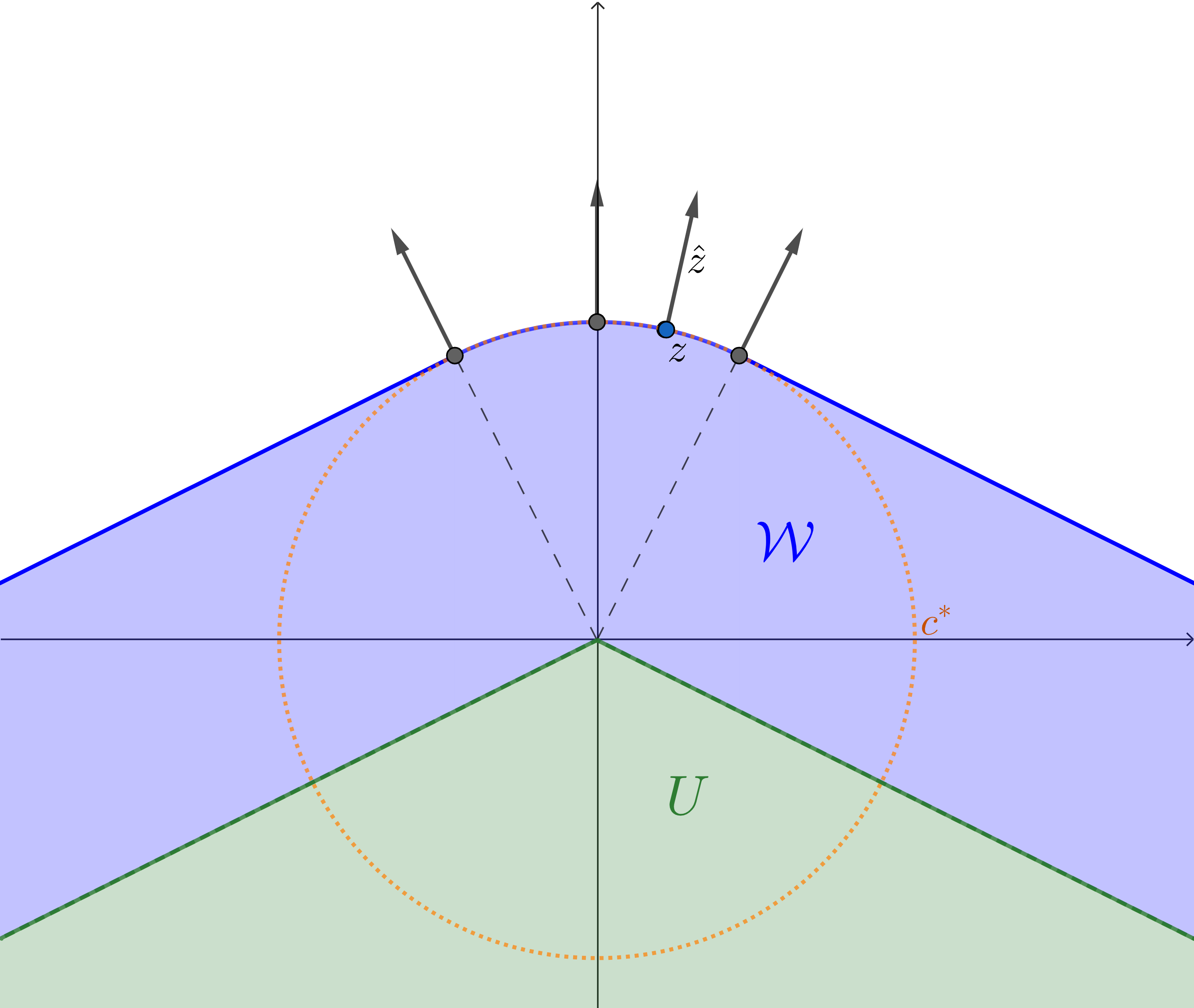}}
 \caption{The \ais\ $\W$ for $u_0=\1_U$ with $U=\{x_N\leq\alpha|x'|\}$. In the case~(a) $\alpha>0$, $\W$ is not regular at $\bar z$, where $\V(\bar z)=\{\nu_1,\nu_2\}$ (in dimension $N=2$). In~the~case~(b) $\alpha<0$, $\W$ satisfies the exterior and interior ball conditions at every boundary point. }
 \label{fig:cones}
 \end{figure}
 
\paragraph{Counter-examples for~\eqref{homo}.}

Three counter-examples on the existence of an {\ais} $\W$ for~\eqref{homo} are given. Firstly, for~$f$ of bistable type~\eqref{bistable} with $\int_0^1f>0$ and $f'(0)<0$, there are compactly supported $u_0$ such that ${\|u(t,\cdot)-\Phi\|_{L^\infty(\R^N)}}\to0$ as $t\to+\infty$, where $\Phi:\R^N\to(0,1)$ is a steady state of~\eqref{homo} such that $\Phi(x)\to0$ as $|x|\to+\infty$, whence $\W$ does not exist, see~{\rm{\cite{DM1,MZ1,MZ2,P1,Z1}}}. 

Secondly, since by~\cite{HR1} the existence of a solution $(c,\varphi)$ of~\eqref{eqvarphi} implies the invasion property, it is natural to wonder whether, under the sole invasion property, the invading solutions which are initially compactly supported admit an \ais. This is actually false in general, as follows from~\cite{FM} in dimension $1$ and from~\cite{DM2} in any dimension, for some multistable functions~$f$: there can exist invading solutions developing into a terrace of fronts expanding with different speeds (see~\cite{DGM,GR,P3,P4} for more results on terraces of fronts for~\eqref{homo} or~\eqref{general}).

Thirdly, even for invading solutions $u$ and even with the existence of a solution $(c,\varphi)$ of~\eqref{eqvarphi}, the \ais\ does not exist in general if $u_0$ is not compactly supported. For instance, if $f(s)=s(1-s)$ and $u_0=\1_U$ with $U:=\bigcup_{n\in\N}\overline{B_{2^n+1}}\setminus B_{2^n-1}$, then~$\W$ does not exist~\cite{HR1}. 
There are also initial conditions $u_0:\R\to(0,1]$ oscillating in a highly non-periodic manner between two exponential functions $e^{-\lambda|x|}$ and $e^{-\mu|x|}$ as~$|x|\to+\infty$, with distinct $\lambda>0$ and $\mu>0$, for which~$\W$ not exist~\cite{HN,Y}.

\paragraph{Examples for the periodic equation~\eqref{general}.}

If the initial condition $u_0\not\equiv0$ is compactly supported, and if $f$ satisfies the generalized Fisher-KPP assumption
\be\label{fkppg}
0<f(x,s)\le\partial_sf(x,0)s\ \hbox{ for all $(x,s)\in\R^N\times(0,1)$},
\ee
then $u$ admits an \ais\ $\W_0$, independent of $u_0$, given by~\eqref{formuleFG}, where $c^*(\xi)>0$ is here the minimal speed of pulsating traveling fronts connecting~$1$ to~$0$ in the direction $\xi$ in the sense of~\eqref{ptf1}-\eqref{ptf2} (all speeds $c\in[c^*(\xi),+\infty)$ are admissible under~\eqref{fkppg}, in contrast with Hypothesis~\ref{hyp:comb-bi}, see Proposition~\ref{pro:U}). These results follow from~\cite{BH1,BHN1,W} and the original formula~\cite{FG}. Actually, formula~\eqref{formuleFG} for compactly supported~$u_0$ holds for a more general class of reaction terms $f$, basically as soon as pulsating traveling fronts exist in every direction~\cite{R1}. This includes, beyond the Fisher-KPP type~\eqref{fkppg}: the generalized positive type~\eqref{fpositive} (see also~\cite{W}), the generalized ignition type~\eqref{fignition}, as well as the bistable type~\eqref{bistable2} with Hypothesis~\ref{hyp:c*>0}. 

Consider now step-like initial conditions $u_0:\R^N\to[0,1]$, for which $u_0(x)=0$ when $x\cdot e$ is large, and $u_0(x)\to1$ as $x\cdot e\to-\infty$, for some $e\in\Sph$. If $f$ is of the generalized ignition or positive cases~\eqref{fignition} or~\eqref{fpositive}, then $u$ has the \ais
\Fi{Wstep}
\W=\big\{x\in\R^N:x\cdot e<c^*(e)\big\},
\Ff
where $c^*(e)$ is the unique or minimal speed of pulsating traveling fronts connecting~$1$ to~$0$ in the direction $e$~\cite{LZ1,LZ2,W,Zh}. For bistable-type equations, results of the type~\eqref{Wstep} are also known, provided pulsating fronts exist in the direction $e$~\cite{X3,X4}.


\subsection{Comments on the conclusions of the main results}\label{sec24}
 
\paragraph{The necessity of the assumptions.} 

Before giving some examples of applications of the main results of Section~\ref{intro}, we first point out that the conclusions do not hold or even do not make sense in general without any of the Hypotheses~\ref{hyp:comb-bi} or~\ref{hyp:c*>0}, or if $u$ is not invading (in Theorem~\ref{th1} or Corollary~\ref{cor2}) or if $u$ does not admit an \ais\ (in Theorem~\ref{th2} or Corollary~\ref{cor1}).

First of all, the conclusions do not hold true in general without Hypothesis~\ref{hyp:comb-bi}, which is required for the uniqueness of the speed of pulsating fronts in each direction. Indeed, if the speeds are not unique, as it is the case for the homogeneous equation~\eqref{homo} with $f>0$ in $(0,1)$, then considering any other front solution $u(t,x):=\varphi(x\cdot e-ct)$ in a direction $e\in\Sph$, with a speed $c>c^*$, one clearly has that $\W=\{x\in\R^N:x\cdot e<c\}$, thus ${z}\cdot\nu=c>c^*$ at any ${z}\in\partial\W$, and moreover $\Omega(u)$ is just given by $\{0,1\}$ and any translation of the profile $\varphi(x\.e)$, hence it does not contains the profiles of the minimal front.

Consider next~\eqref{homo} with $f$ of bistable type~\eqref{bistable} with $f'(0)<0$, $f'(1)<0$, and $\int_0^1f>0$. Hence, Hypotheses~\ref{hyp:comb-bi}-\ref{hyp:c*>0} hold. On the one hand, if $\|u_0\|_{L^\infty(\R^N)}<\alpha$, the solution $u$ with initial condition $u_0$ satisfies $\|u(t,\cdot)\|_{L^\infty(\R^N)}\to0$ as $t\to+\infty$, whence it admits the \ais\ $\W=\emptyset$, but it is not invading and the conclusions of Theorem~\ref{th1} or Corollary~\ref{cor2} do not hold, since $\Omega(u)=\{0\}$.\footnote{The same observations remain valid if $\|u_0\|_{L^1(\R^N)}$ is small enough, since then $\|u(1,\cdot)\|_{L^\infty(\R^N)}$ would be less than $\alpha$, by comparing $u$ with a linear heat equation and using the Lipschitz-continuity of $f$).} On the other hand, by~\cite{DM1,MZ1,MZ2,P1,Z1}, there is a unique $R^*>0$ such that $\|u(t,\cdot)-\Phi\|_{L^\infty(\R^N)}\to0$ as $t\to+\infty$ when $u_0=\1_{B_{R^*}}$, with $\Phi:\R^N\to(0,1)$ being a steady state of~\eqref{homo} such that $\Phi(x)\to0$ as $|x|\to+\infty$. Here, $u$ is not invading and does not have any \ais, so Theorem~\ref{th2} and Corollary~\ref{cor1} do not make sense. The conclusions of Theorem~\ref{th1} and Corollary~\ref{cor2} do not hold either, since in this case $\Omega_e(u)=\{\Phi(\cdot+a):a\in\R^N,\,a\cdot e\ge0\}\cup\{0\}$ for any $e\in\Sph$ and the planar front profiles do not coincide with $\Phi$ up to shift.

In the examples of the previous paragraph, the solutions $u$ were not invading solutions. Without Hypothesis~\ref{hyp:c*>0}, the conclusions of the main results do not hold either even for invading solutions. 
For instance, from~\cite{DM2,FM}, there are homogeneous equations~\eqref{homo} with multistable functions $f$ for which invading solutions with initial compact support exist but fronts connecting $1$ to $0$ do not (so the conclusions of Theorems~\ref{th1} and~\ref{th2} and Corollaries~\ref{cor1}-\ref{cor2} do not make sense in such a case).

\paragraph{Earlier results for~\eqref{homo} and compactly supported $u_0$.}

Consider~\eqref{homo} with~$f$ of ignition type~\eqref{ignition} and $f'(1)<0$, or of bistable type~\eqref{bistable} with $f'(0)<0$, $f'(1)<0$, and $\int_0^1f>0$. Hypotheses~\ref{hyp:comb-bi}-\ref{hyp:c*>0} are then fulfilled. Pick any invading solution $u$ with~$u_0$ compactly supported. By~\eqref{WBc*}, $u$ admits the \ais\ $\W=B_{c^*}$, with~$(c^*,\varphi^*)$ denoting the unique solution of~\eqref{eqvarphi} (up to shift for $\varphi^*$). It follows from~\cite{BH2,U2} that, for any $e\in\Sph$, $\phi_e(t,x)\equiv\varphi^*(x\cdot e-c^*t)$ in $\R\times\R^N$ up to shift in time, and that
\Fi{Omegaeu}
\Omega_e(u)=\big\{x\mapsto\varphi^*(x\cdot e+\sigma):\sigma\in\R\big\}\cup\{0,1\}.
\Ff
This result is coherent --~and even in this case stronger~-- 
than the inclusions regarding~$\Omega_e(u)$ or~$\Omega_{\hat{z}}(u)$ in the main results since the inclusions there are equalities here. The same conclusion~\eqref{Omegaeu} regarding the invading solutions to~\eqref{homo} with compactly supported~$u_0$ holds with more general bistable-type functions $f$ for which~\eqref{eqvarphi} admits a solution~$(c^*,\varphi^*)$ (see~\cite{DGM,FM,P4} in dimension $N=1$ and~\cite{DM2} for general $N\ge1$). It holds as well with Fisher-KPP type functions $f$ such that $f>0$ and $s\mapsto f(s)/s$ is decreasing in~$(0,1)$, where $\varphi^*(x\cdot e-c^*t)$ is a planar front connecting $1$ and $0$ with minimal speed~$c^*$ (see~\cite{B,HNRR1,L,NRR,U1} if $N=1$ and~\cite{D1,G1,RRR} for general $N\ge1$).

\paragraph{Applications and counter-examples for the homogeneous equation~\eqref{homo} with non-compactly supported~$u_0$.}

Consider the solutions $u$ of~\eqref{homo} with $u_0=\1_U$. From~\cite[Theorem~3.1]{BH2}, Hypotheses~\ref{hyp:comb-bi}-\ref{hyp:c*>0} mean that~\eqref{eqvarphi} has a solution $(c^*,\varphi^*)$ and that $f$ is non-increasing in a right neighborhood of $0$ and decreasing in a left neighborhood of $1$. The case of non-compactly supported~$u_0$ is much richer than the compact case and leads to new and different phenomena according to the shape of~$U$. To illustrate these phenomena, consider for instance $U$ of conical type
\Fi{Ualpha}
U=\big\{x=(x',x_N)\in\R^{N-1}\times\R:x_N\le\alpha\,|x'|\big\},\ \hbox{
with $\alpha\in\R^*$.}\footnote{In the case $\alpha=0$ or more generally if $U$ is trapped between two shifts of the half-plane $\{x_N\le0\}$, it follows from~\cite{BH2,P3}, or~\cite{BH2,FM,MN,MNT,RR1} in the pure bistable case~\eqref{bistable}, that $\W=\{x_N<c^*\}$, $\nu=\mathrm{e}_N$ for any ${z}\in\partial\W$, and that $\Omega_{\hat{z}}(u)$ is equal to the union of $\{0,1\}$ and the planar profiles $x\mapsto\varphi^*(x_N+\sigma)$.}
\Ff

If $\alpha<0$, then, by Section~\ref{sec23}, $u$ has the \ais~${\mc{W}=B_{c^*}+U}$ (the same asymptotic invasion shape $\W$ would also arise for~$u_0=\1_{U'}$ with $U'$ trapped between two shifts of~$U$), see Figure \ref{fig:cones}~(b). In particular, $\mc{W}$ satisfies the interior and exterior ball conditions at any point ${z}\in\partial\W$. Then Theorem~\ref{th2} implies that ${z}\cdot\nu=c^*$ (as one can also directly check here) and that $\Omega_{\hat{z}}(u)$ contains all shifts of the planar profile $x\mapsto\varphi^*(x\cdot\nu)$, together with the constants $0$ and $1$. The set $\partial\W$ is composed by a spherical part, whose points ${z}=({z}',{z}_N)$ satisfy $|{z}|=c^*$ and $\nu=\hat{z}$, and a conical part where $|{z}|> c^*$ and
\Fi{formulenu}
\nu=\frac{\displaystyle-\alpha\,\hat{z'}+\mathrm{e}_N}{\sqrt{\alpha^2+1}},\ \hbox{ where $\mathrm{e}_N:=(0,\cdots,0,1)$}.
\Ff
As a matter of fact, in this specific case,~\cite[Theorem~3.3]{HR3} implies that, for every sequence $(t_n)_{n\in\N}\to+\infty$ and every sequence $(x_n)_{n\in\N}$ in $\R^N$ such that $\sup_{n\in\N}|x'_n|<+\infty$, the sequence $(u(t_n,x_n+\cdot))_{n\in\N}$ converges, up to a subsequence, to a function depending on the variable $x_N$ only. Other local asymptotic one-dimensional properties for~\eqref{homo} with $u_0=\1_U$ are given in~\cite{HR2,HR3}. However,~\cite{HR2,HR3} do not give the convergence to fronts' profiles at large time and, up to our knowledge, Theorem~\ref{th2} and Corollary~\ref{cor1} are new even for~\eqref{homo} in the simple case of the cone~\eqref{Ualpha} (or if $U$ were trapped between two shifts of that cone).

If $\alpha>0$, then $u$ still has the \ais\ expressed by~$\mc{W}=B_{c^*}+U$, or equivalently~\eqref{Walpha} (this would hold true for $u_0=\1_{U'}$ with $U'$ lying between two shifts of $U$), but now $\partial\W$ is not differentiable at the vertex $\bar z:=(0,\cdots,0,c^*\sqrt{\alpha^2+1})$, see Figure \ref{fig:cones}~(a). Outside $\bar z$, the set $\partial\W$ is of class $C^2$ and satisfies ${z}\cdot\nu=c^*$ for every $z\in\partial\W\setminus\{\bar z\}$, with outward unit normal still given by~\eqref{formulenu}, hence Theorem~\ref{th2} implies that $\Omega_{\hat{z}}(u)$ contains all shifts of $x\mapsto\varphi^*(x\cdot\nu)$, in addition to $\{0,1\}$. But Theorem~\ref{th2} can not be applied at the vertex $\bar z$, where $\W$ does not fulfill the exterior ball condition. However, Corollary~\ref{cor1} implies that $\bar z\cdot\nu=c^*$ for all $\nu\in\mc{V}(\bar z)$, as one can also directly check
noticing that
$$\mc{V}({\bar z})=\Big\{\frac{\alpha e'+\mathrm{e}_N}{\sqrt{\alpha^2+1}}:e'\in\R^{N-1},\,|e'|=1\Big\}=\big\{\nu\in\Sph:{\bar z}\cdot\nu=c^*\big\},$$
and, in addition, that $\Omega_{\hat{\bar z}}(u)=\Omega_{\mathrm{e}_N}(u)$ contains all shifts of all the profiles $x\mapsto\varphi^*(x\cdot\nu)$ for $\nu\in\mc{V}({\bar z})$, together with $\{0,1\}$. This result is new in this geometry in any dimension $N\ge2$ under the sole Hypotheses~\ref{hyp:comb-bi}-\ref{hyp:c*>0}.

On the other hand, still for $\alpha>0$, more precise results are known for~\eqref{homo} in dimension $N=2$ and $f$ of bistable type~\eqref{bistable} with $\int_0^1f>0$. Namely, it follows from~\cite{HMR1,HMR2,NT} that: 1)~$\Omega_e(u)=\{1\}$ for any $e=(e_1,e_2)\in\mathbb{S}^1$ with $e_2<\alpha|e_1|$; 2)~$\Omega_e(u)$ is the union of $\{0,1\}$ with all shifts of the planar profile $x\mapsto\varphi^*((x_2-\alpha x_1)/\sqrt{\alpha^2+1})$, resp. $x\mapsto\varphi^*((x_2+\alpha x_1)/\sqrt{\alpha^2+1})$, if $e_2\ge\alpha|e_1|$ and $e_1>0$, resp. $e_1<0$; 3)~$\Omega_{(0,1)}(u)$ is the union of $\{0,1\}$ with all shifts of the two planar profiles $x\mapsto\varphi^*((x_2\pm\alpha x_1)/\sqrt{\alpha^2+1})$ and all shifts of a {\em $V$-shaped} profile $\phi:\R^2\to(0,1)$ (which is even in $x_1$, decreasing in $x_2$, and converges to $1$ and $0$ as $x_2-\alpha |x_1|\to-\infty$ and $+\infty$ respectively). In particular, $\Omega_{(0,1)}(u)$ does not contain the planar profile $x\mapsto\varphi^*(x_2)$, but it contains a non-planar profile, showing that the inclusion in part~(ii) of Corollary~\ref{cor1} is strict in general. Furthermore, the set of directions $\nu$ such that $\phi_\nu(t,\cdot+y)\in\O(u)$ is not connected in general. However, in this example, the inclusion in part~(ii) of Theorem~\ref{th2} is an equality at regular points of $\partial\W$. We actually conjecture it is truly an equality at points ${z}\in\partial\W$ around which~$\W$ is sufficiently smooth, but this question is left open.

\paragraph{The general periodic equation~\eqref{general}.}

Up to our knowledge, the convergence to profiles of pulsating traveling fronts at large time is completely new for general periodic equations~\eqref{general}. The only known related results were obtained under further assumptions on~\eqref{general} for front-like initial conditions in dimension $1$~\cite{DHZ2,DGM,X2} or in higher dimensions as follows from~\cite{BH3,X3}, or for equations with Fisher-KPP reactions~\eqref{fkppg} in dimension~$1$~\cite{HNRR2}.

In any dimension $N\ge1$, under Hypotheses~\ref{hyp:comb-bi}-\ref{hyp:c*>0}, even for invading solutions which are initially compactly supported, the convergence to profiles of pulsating traveling fronts was not known. Theorem~\ref{th1} and Corollary~\ref{cor2} especially answer this question. Furthermore, if the \ais\ $\W_0$ of the invading solutions with compactly supported $u_0$ is of class $C^1$, then necessarily $\mc{V}({z})=\{\nu({z})\}$ at any point ${z}\in\partial\W_0$, where~$\nu(z)$ is the outward normal, and since the set of all such normals is then equal to the whole sphere $\Sph$, it follows that $\Omega(u)$ contains all shifts of the profiles of the pulsating traveling fronts connecting $1$ to $0$ in all directions, as stated in Corollary~\ref{cor2}. A sufficient condition for $\W_0$ to be $C^1$ is that, for each $e\in\Sph$, there is a unique mini\-mizer $\xi_e\in\Sph$ realizing the minimum in~\eqref{formuleFG}~\cite{GHR2} (see also~\cite{S} for related results on the map $e\mapsto\xi_e$ for $\partial_tu=\Delta u+\mu(x)f(u)$ with Fisher-KPP reactions~$f$). Further comments on the smoothness of $\W_0$ are given in~\cite{GHR2}.
	
Finally, Theorem~\ref{th2} and Corollary~\ref{cor1} are also completely new when $u_0$ is not compactly supported. For instance, if $u_0=\1_U$, $U$ is of the type~\eqref{Ualpha} with $\alpha<0$ and $\partial\W_0$ is $C^1$, where $\W_0$ is the \ais\ of invading solutions with compactly supported $u_0$, then $u$ has an \ais\ $\W=U+\W_0$~\cite{GHR2}, so $\W$ satisfies the interior ball condition at every boundary point if $\W_0$ does so (the exterior ball condition is automatically satisfied along $\partial\W$ by convexity of $U$ and~$\W_0$). In particular, Theorem~\ref{th2} applies to such $u$ and $\W$ in the case when $\W_0$ is of class~$C^2$. But if $\W_0$ is not regular, $\W$ does not necessarily satisfy the interior ball condition at every point $z\in \partial\W$. In this case, curved front profiles may also appear in the $\O$-limit set and the inclusion in Theorem~\ref{th2} and Corollary~\ref{cor1} is strict. Indeed, for example, if $N=2$ and coincidentally
\begin{equation}\label{c-CF}
\hat{c}:=\sqrt{1\!+\!\alpha^2}\,c^*(e_r)=\sqrt{1\!+\!\alpha^2}\,c^*(e_l)\le\frac{c^*(e)}{e_2},  \hbox{ for any $e\!=\!(e_1,e_2)$ with  }e_2>\frac{1}{\sqrt{1\!+\!\alpha^2}},
\end{equation}
where
$$e_r:= \Big(\frac{-\alpha}{\sqrt{1+\alpha^2}},\frac{1}{\sqrt{1+\alpha^2}}\Big),\quad e_l:=\Big( \frac{\alpha}{\sqrt{1+\alpha^2}},\frac{1}{\sqrt{1+\alpha^2}} \Big),$$
(an example of this type is given in~\cite[Corollary~6.2]{GR2}) then $\partial\W$ has a corner at $(0,\hat{c})$ by the formula of the asymptotic invasion shape for the type \eqref{Ualpha} with $\alpha<0$ in \cite{GHR2}, and does not satisfy the interior ball condition at this boundary point. In this case, there can exist a $\Lambda$-shaped front $W: \R\times\R^2\rightarrow (0,1)$ which has level sets boundedly (in the sense of the Hausdorff distance) oscillating around $\partial U+ (0,\hat{c})t$, see~\cite[Theorems~1.3-1.4]{GW}. Therefore, it is expected that $\Omega_{(0,1)}(u)$ contains all profiles $W(s,y+\cdot)$, for $(s,y)\in\R\times\R^N$. We also conjecture that $\Lambda$-shaped profiles appear as well in the $\Omega$-limit set of an invading solution with compactly supported initial condition, so that the inclusion in Theorem~\ref{th1} and Corollary~\ref{cor2} might be strict in that case.


\section{Proofs of the main results}\label{sec3}


\subsection{Proof of Theorem~\ref{th2}}\label{sec31}

This section is devoted to the proof of the main Theorem~\ref{th2}. We preliminarily introduce  some notations and prove an auxiliary lemma which will be used in several further proofs. Namely, for $e\in\Sph$ and $\lambda\in\R$, we consider the principal eigenvalue $k_e(\lambda)$ of the eigenvalue  problem
\be\label{evp}
\dv(A(x)\nabla\varphi)\!+\!(q(x)\!-\!2\lambda A(x)e)\cdot\nabla\varphi\!+\!\big(\lambda^2eA(x)e\!-\!\lambda\dv(A(x)e)\!-\!\lambda q(x)\cdot e\big)\varphi\!=\!k_e(\lambda)\varphi
\ee
in $\R^N$ acting on periodic functions. Notice that the left-hand side is equal to $e^{\lambda x\cdot e}\big[\dv\big(A(x)\nabla(e^{-\lambda x\cdot e}\varphi)\big)+q(x)\cdot\nabla(e^{-\lambda x\cdot e}\varphi))\big]$. From Krein-Rutman and elliptic regularity theories, this principal eigenvalue $k_e(\lambda)$ is uniquely characterized by the existence (and uniqueness up to multiplication) of a positive principal  eigenfunction $\varphi$ of class $C^{2,\alpha}(\R^N)$. 

\begin{lemma}\label{lemkelambda}
For every $e\in\Sph$, there holds $k_e(\lambda)=o(\lambda)$ as $\lambda\to0$.
\end{lemma}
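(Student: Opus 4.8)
The plan is to build a first-order periodic corrector that turns the constant function into an approximate eigenfunction, and then to trap $k_e(\lambda)$ between two quantities that are both $O(\lambda^2)$, which in particular is $o(\lambda)$.

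First I would note that at $\lambda=0$ the operator in~\eqref{evp} reduces to $L_0\varphi:=\dv(A(x)\nabla\varphi)+q(x)\cdot\nabla\varphi$, for which the constant function $1$ is a positive periodic eigenfunction with eigenvalue $0$, so $k_e(0)=0$. Writing $L_\lambda=L_0+\lambda L_1+\lambda^2 L_2$ with $L_1\varphi=-2A(x)e\cdot\nabla\varphi-(\dv(A(x)e)+q(x)\cdot e)\varphi$ and $L_2\varphi=(eA(x)e)\varphi$, the naive test function $\varphi\equiv1$ only yields $k_e(\lambda)=O(\lambda)$, since the $O(\lambda)$ part of $L_\lambda 1$ is $-\lambda(\dv(A(x)e)+q(x)\cdot e)$, which has zero average over the cell $(0,1)^N$ but is not identically zero. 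To cancel it I would solve for a periodic corrector $\chi\in C^{2,\alpha}(\R^N)$,
\[
L_0\chi=\dv(A(x)e)+q(x)\cdot e\quad\text{in }\R^N .
\]
This is solvable by the Fredholm alternative for the periodic operator $L_0$: its $L^2$-adjoint is $L_0^*\psi=\dv(A\nabla\psi)-q\cdot\nabla\psi$ because $\dv q=0$, whose periodic kernel is spanned by the constants, so solvability amounts to the right-hand side having zero average on $(0,1)^N$; and indeed $\int_{(0,1)^N}\dv(A(x)e)\,dx=0$ by periodicity, while $\int_{(0,1)^N}q(x)\cdot e\,dx=\big(\int_{(0,1)^N}q\big)\cdot e=0$ by assumption. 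Elliptic regularity then gives $\chi\in C^{2,\alpha}$.

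Next I would set $\varphi_\lambda:=1+\lambda\chi$, which is positive and periodic for $|\lambda|$ small. Using $L_0\chi=\dv(A(x)e)+q(x)\cdot e$, a short computation shows that the $O(1)$ and $O(\lambda)$ terms in $L_\lambda\varphi_\lambda$ cancel, leaving $L_\lambda\varphi_\lambda=\lambda^2 g_\lambda$ with $g_\lambda:=L_1\chi+eA(x)e+\lambda(eA(x)e)\chi$; since $\chi$, $\nabla\chi$ and $A$ are bounded, $\|g_\lambda\|_{L^\infty(\R^N)}\le C$ uniformly for $|\lambda|\le\lambda_0$. Finally I would invoke the standard two-sided characterization of the periodic principal eigenvalue (proved by a touching-point comparison with the principal eigenfunction, using $A>0$): for any positive periodic $\varphi\in C^2(\R^N)$, one has $\min_{\R^N}(L_\lambda\varphi/\varphi)\le k_e(\lambda)\le\max_{\R^N}(L_\lambda\varphi/\varphi)$. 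Applying this with $\varphi=\varphi_\lambda$, and using $\varphi_\lambda\ge 1/2$ for $|\lambda|$ small, gives $|k_e(\lambda)|\le 2C\lambda^2$, hence $k_e(\lambda)=o(\lambda)$.

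I do not expect a genuine obstacle here: the whole point is the cancellation of the term linear in $\lambda$, which is made possible precisely by $\dv q=0$ and $\int_{(0,1)^N}q=0$. An equivalent route, if preferred, is analytic perturbation theory: $0$ is a simple eigenvalue of $L_0$ and $\lambda\mapsto L_\lambda$ is polynomial in $\lambda$, so $k_e$ is real-analytic near $0$; differentiating $L_\lambda\varphi_\lambda=k_e(\lambda)\varphi_\lambda$ at $\lambda=0$ and pairing with the constant adjoint eigenfunction yields $k_e'(0)=-|(0,1)^N|^{-1}\int_{(0,1)^N}(\dv(A(x)e)+q(x)\cdot e)\,dx=0$, whence $k_e(\lambda)=o(\lambda)$ directly.
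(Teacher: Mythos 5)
Your argument is correct and it takes a genuinely different route from the paper. The paper does not build a corrector: it simply integrates the eigenvalue equation~\eqref{evp} over the periodicity cell, uses $\dv q=0$ and periodicity to kill the leading terms and obtain $k_e(\lambda)\int\varphi=-\lambda\int Ae\cdot\nabla\varphi+\int(\lambda^2 eAe-\lambda q\cdot e)\varphi$, and then divides by $\lambda$ and passes to the limit using the elliptic-estimate fact that the normalized principal eigenfunction $\varphi$ converges to the constant $1$ in $C^2$ as $\lambda\to0$, so the right-hand side tends to $-\int_{(0,1)^N}q\cdot e=0$. Your version instead constructs a first-order periodic cell corrector $\chi$ solving $L_0\chi=\dv(Ae)+q\cdot e$ (solvable by Fredholm because the right-hand side has zero average, precisely thanks to $\dv q=0$ and $\int q=0$), plugs $\varphi_\lambda=1+\lambda\chi$ into $L_\lambda$ to cancel the $O(\lambda)$ term, and then sandwiches $k_e(\lambda)$ with the classical two-sided characterization $\min(L_\lambda\varphi/\varphi)\le k_e(\lambda)\le\max(L_\lambda\varphi/\varphi)$. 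What this buys is a direct, constructive $|k_e(\lambda)|\le C\lambda^2$ bound with an explicit constant, without invoking analytic perturbation theory or the $C^2$ convergence of eigenfunctions; what the paper's route buys is brevity, since it avoids solving an auxiliary PDE. (Your alternative closing remark via $k_e'(0)=0$ from analytic perturbation is essentially the spirit of the paper's argument, just phrased by differentiating rather than by integrating and taking a limit.) Both are valid.
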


\begin{proof}
Fix $e\in\Sph$. Since the positive constant function $\varphi=1$ solves~\eqref{evp} with $\lambda=0$ and $0$ right-hand side, it follows by uniqueness that $k_e(0)=0$ and that any associated eigenfunction of~\eqref{evp} with $\lambda=0$ is constant. Furthermore, the map $\lambda\mapsto k_e(\lambda)$ is analytic~\cite{K} and, from standard elliptic estimates, if $\varphi$ is normalized so that, say, $\max_{\R^N}\!\varphi=1$, then $\varphi\to1$ as $\lambda\to0$ in $C^2(\R^N)$. Therefore, after integrating~\eqref{evp} over~$(0,1)^N$, using the periodicity of $A$, $q$, $\varphi$ together with $\dv q=0$, one gets that
$$\baa{rcl}
\displaystyle k_e(\lambda)\int_{(0,1)^N}\!\!\varphi & \!\!=\!\! & \displaystyle-2\lambda\int_{(0,1)^N}\!\!A(x)e\cdot\nabla\varphi+\!\!\int_{(0,1)^N}\!\!(\lambda^2eA(x)e\!-\!\lambda\dv(A(x)e)\!-\!\lambda q(x)\cdot e)\varphi\vspace{3pt}\\
& \!\!=\!\! & \displaystyle-\lambda\int_{(0,1)^N}A(x)e\cdot\nabla\varphi+\int_{(0,1)^N}(\lambda^2eA(x)e\!-\!\lambda q(x)\cdot e)\varphi,\eaa$$
after integrating by parts. By dividing by $\lambda\neq0$ and passing to the limit $\lambda\to0$, one derives $k_e(\lambda)=o(\lambda)$ as $\lambda\to0$, since $q$ is assumed to have zero average over $(0,1)^N$.
\end{proof}

The proof of Theorem~\ref{th2} is then achieved through the following three Lemmata~\ref{lem:01}-\ref{lem:exterior}. In their proofs, we will make use of Propositions~\ref{pro:U} and~\ref{pro:Wchar}, that will be proved in Section \ref{sec4}.

Let us recall our notation $\hat x:=x/|x|$, for any $x\in\R^N\setminus\{0\}$.

\begin{lemma}\label{lem:01}
Assume that the invasion property holds. Let $u$ be a solution to~\eqref{general} admitting an \ais\ $\W\neq\emptyset$. There holds that $1\in\O_e(u)$ for every $e\in\Sph$, and moreover $0\in\O_{\hat{z}}(u)$ for any $z\notin\W$.
\end{lemma}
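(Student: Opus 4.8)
\medskip

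The plan is to obtain both conclusions from the two defining properties of the \ais\ in~\eqref{ass-cpt}, combined with the interior/exterior cone estimates of Proposition~\ref{pro:Wchar}. For the statement $1\in\O_e(u)$: since $\W$ is open and nonempty and, under the invasion property, contains the ball $B_\gamma$ (by~\eqref{spreading} and Proposition~\ref{pro:Wchar}, or directly since $\W\supset B_\gamma$), I fix any $e\in\Sph$ and observe that the segment $\{se:s\in[0,\gamma)\}$ lies in $\W$. Picking $s_0\in(0,\gamma)$, the singleton $C=\{s_0e\}$ is a compact subset of $\W$, so the first line of~\eqref{ass-cpt} gives $u(t,ts_0e)\to1$ as $t\to+\infty$. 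Setting $t_n=n$, $x_n=n s_0 e$, we have $\widehat{x_n}=e$ for all $n$ and, by parabolic estimates together with the local uniform convergence furnished by the compact-set version of~\eqref{ass-cpt} applied to slightly larger balls around $s_0 e$, the translates $u(t_n,x_n+\cdot)$ converge to $1$ in $L^\infty_{loc}(\R^N)$; more robustly, one applies~\eqref{ass-cpt} with $C=\overline{B_r(s_0e)}\subset\W$ for small $r$ to get $\min_{|y|\le r}u(t,t s_0 e + ty)\to1$, hence in particular $u(t_n, x_n+\cdot)\to1$ locally uniformly since $t_n\to+\infty$ absorbs any fixed compact set of $y$'s. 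Thus $1\in\O_e(u)$.

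\medskip

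For the statement $0\in\O_{\hat z}(u)$ when $z\notin\W$: the difficulty is that $z\notin\W$ does not immediately give a compact subset of $\R^N\setminus\overline{\W}$ on the ray through $z$, because $z$ could lie on $\partial\W$. I handle this by moving slightly outward along the ray $\R^+\hat z$. Since $\W=\inter\overline{\W}$ and $\W$ is star-shaped with respect to the origin (Proposition~\ref{pro:Wchar}), the set $(\R^+\hat z)\setminus\overline{\W}$ is a nonempty open ray $(\rho,+\infty)\hat z$ for some $\rho\ge|z\cdot\hat z|$ with $\rho\hat z\in\partial\W$ in the relevant case (and $\rho<+\infty$ because $\W$ is bounded when $\partial\W\ne\emptyset$, or, if $\W=\R^N$ the hypothesis $z\notin\W$ is vacuous). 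Pick any point $\zeta=s_1\hat z$ with $s_1>\rho$; by the exterior cone estimate~\eqref{outer-cone} applied at the boundary point $\rho\hat z$, a whole ball $\overline{B_r(\zeta)}$ is contained in $\R^N\setminus\overline{\W}$ for some $r>0$ (alternatively, directly from $\R^N\setminus\overline{\W}$ being open). Then the second line of~\eqref{ass-cpt} with $C=\overline{B_r(\zeta)}$ yields $\max_{|y|\le r}u(t, t\zeta + ty)\to0$ as $t\to+\infty$. Taking $t_n=n$ and $x_n=n\zeta$, we have $\widehat{x_n}=\hat\zeta=\hat z$, and since $t_n\to+\infty$ the rescaled balls $\{t_n y:|y|\le r\}$ eventually swallow any fixed compact set, giving $u(t_n,x_n+\cdot)\to0$ in $L^\infty_{loc}(\R^N)$. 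Hence $0\in\O_{\hat z}(u)$.

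\medskip

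The main obstacle I anticipate is the borderline case $z\in\partial\W$, where one must legitimately perturb $z$ into the open exterior $\R^N\setminus\overline{\W}$ while keeping the direction $\hat z$ fixed; this is exactly why the exterior cone condition~\eqref{outer-cone} (or just openness of the complement of $\overline\W$ together with star-shapedness) is invoked, rather than trying to use $z$ itself. A secondary technical point is the passage from the ``compact sets $C$'' formulation of~\eqref{ass-cpt}, which controls $u(t,tx)$ for $x$ in a fixed compact set, to the ``$u(t_n,x_n+\cdot)$'' formulation of $\O_e(u)$, which controls $u$ on a fixed compact set of the \emph{unscaled} variable translated by $x_n$: this is harmless because $x_n+K = t_n(\zeta + K/t_n)$ and $\zeta + K/t_n$ lies, for $n$ large, in any fixed neighborhood of $\zeta$, which we have arranged to be a compact subset of the appropriate region; uniform continuity of $u(t,\cdot)$ in space, from the parabolic gradient bounds stated in the introduction, then closes the argument.
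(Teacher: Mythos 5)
Your proof is correct and follows essentially the same route as the paper: $B_\gamma\subset\W$ from~\eqref{inner-cone} plus the first line of~\eqref{ass-cpt} for $1\in\O_e(u)$, and the boundary point on the ray $\R^+\hat z$ together with~\eqref{outer-cone} and the second line of~\eqref{ass-cpt} for $0\in\O_{\hat z}(u)$, with the same rescaling argument to pass between the two formulations. Two harmless slips in side remarks: the inequality should read $\rho\le|z|$ (not $\rho\ge|z\cdot\hat z|$), and $\W$ need not be bounded when $\partial\W\neq\emptyset$ --- finiteness of $\rho$ follows directly from $z\notin\W$ and star-shapedness.
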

	
\begin{proof}
The inclusion~\eqref{inner-cone} of Proposition~$\ref{pro:Wchar}$ yields $B_{\gamma}\subset\W$ (this follows by taking any $z\in\partial\W$ in~\eqref{inner-cone} if $\W\neq\R^N$, and this is trivial if $\W=\R^N$), hence  in particular, for every $e\in\Sph$, it holds that $u(t,e+\. )\to1$ as $t\to+\infty$, locally uniformly in $\R^N$, that is, $1\in\O_e(u)$. Suppose now that there exists $z\notin\W$. Since $\W$ contains a neighborhood of the origin, it follows that $z\neq0$, and moreover there exists $\bar x\in\partial\W$ such that $\hat{\bar x}=\hat{z}$. From~\eqref{outer-cone} one then derives $B_\gamma(2\bar x)\subset \R^N\setminus\W$, whence $2\bar x\in\R^N\setminus\ol\W$ and therefore~\eqref{ass-cpt} implies that $u(t,2t\bar x+\. )\to0$ as $t\to+\infty$, locally uniformly in $\R^N$. This shows that~$0\in\O_{\hat{\bar x}}(u)=\O_{\hat z}(u)$.
\end{proof}

The following two lemmata are the key-points in the proof of Theorem~\ref{th2}.

\begin{lemma}\label{lem:interior}
Assume that Hypotheses~$\ref{hyp:comb-bi}$-$\ref{hyp:c*>0}$ hold. Let $u$ be a solution to~\eqref{general} admitting an \ais\ $\W$. Assume that $\W$ satisfies an interior ball condition at a point $\bar x\in\partial\W$, i.e., there exist $r>0$ and $\nu\in\Sph$ such that ${B_r(\bar x-r\nu)\subset\W}$. Then one has $\bar x\.\nu\geq c^*(\nu)$. Moreover, if $\bar x\.\nu= c^*(\nu)$ then 
$$\displaystyle\O_{\hat{\bar x}}(u)\supset\big\{\phi_\nu(t,\cdot+y):(t,y)\in\R\times\R^N\big\}.$$	
\end{lemma}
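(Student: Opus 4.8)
The plan is to establish the two assertions of Lemma~\ref{lem:interior} by comparing $u$ with a pulsating front $\phi_\nu$ placed inside the invaded region. First I would exploit the interior ball condition: since $B_r(\bar x-r\nu)\subset\W$, the point $\bar x$ is approached by the interior ball in the direction $\nu$, so along the ray $t\mapsto t\bar x$ the solution converges to $1$ for the compact sets $C$ interior to $\W$, and, more importantly, one controls $u(t,\cdot)$ from below on large balls sitting inside $t\,\overline{B_r(\bar x-r\nu)}$. Concretely, fix a large $R$ and note that for each $\lambda\in(0,1)$ the ball $B_{R}(t(\bar x-r\nu))$ lies in $t\W$ for $t$ large, hence by Definition~\ref{def:W} one has $u(t,t(\bar x-r\nu)+\cdot)\to1$ locally uniformly. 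Using this as a lower barrier and the invasion property (Proposition~\ref{pro:c>0invasion}), together with the spreading estimate~\eqref{spreading}, I would slide a pulsating-front subsolution $\phi_\nu(t-t_0,\cdot)$ (suitably shifted in time so that at some initial time it is below $u$, which is possible because $U_\nu(x,z)\to1$ as $z\to-\infty$ and $u$ is close to $1$ on a large set) and invoke the comparison principle to deduce $u(t,x)\geq \phi_\nu(t-t_0,x)=U_\nu(x,x\cdot\nu-c^*(\nu)(t-t_0))$ for all subsequent times. This is the standard ``front as a lower barrier'' argument, and the point where $U_\nu$ transitions from near $1$ to near $0$ travels with normal velocity $c^*(\nu)$, forcing the level sets of $u$ to move at least that fast in the direction $\nu$; evaluated along $t\bar x$ this yields $\bar x\cdot\nu\geq c^*(\nu)$.

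For the second assertion, suppose $\bar x\cdot\nu = c^*(\nu)$, i.e. the interior speed is exactly the front speed. The strategy is to show that the lower barrier above is \emph{asymptotically sharp}: combine the already-established lower bound $u\geq\phi_\nu(t-t_0,\cdot)$ with an analogous upper barrier coming from the definition of $\W$ applied to compact sets in $\R^N\setminus\overline\W$ (near the boundary point $\bar x$, sliding a front \emph{supersolution} down along the exterior direction using~\eqref{ptf2} and the monotonicity/stability Hypothesis~\ref{hyp:comb-bi}, which guarantees a comparison principle near the states $0$ and $1$). Then, picking sequences $t_n\to+\infty$ and $x_n$ with $\hat{x_n}\to\hat{\bar x}$ chosen so that the phase $x_n\cdot\nu - c^*(\nu)(t_n - t_0)$ stays bounded (which is possible precisely because $\bar x\cdot\nu=c^*(\nu)$, so the ``natural'' phase along the ray is stationary to leading order), and using parabolic estimates to extract a locally uniform limit, one sees that the limit $\psi$ of $u(t_n,x_n+\cdot)$ is squeezed between two shifts of $U_\nu(\cdot,\cdot\cdot\nu + \sigma)$ that can be made arbitrarily close; letting the shifts coincide, $\psi$ is itself a translate $\phi_\nu(t,\cdot+y)$, and by varying the phase offset and using the continuity and uniqueness (up to shift) of $U_\nu$ from Proposition~\ref{pro:U} one recovers every translate. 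Hence $\O_{\hat{\bar x}}(u)$ contains all $\phi_\nu(t,\cdot+y)$, $(t,y)\in\R\times\R^N$.

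\textbf{Main obstacle.} The delicate point is the second part: producing a matching \emph{upper} barrier near $\bar x$ that degrades to exactly the same front speed $c^*(\nu)$. The exterior ball condition is not assumed in this lemma (only the interior one), so one cannot directly slide a front from outside along a ball; instead one must use only the behavior of $u$ on compact subsets of $\R^N\setminus\overline\W$ together with the \ais\ definition, and then propagate this control inward via the comparison principle in the region where $u$ is small, which is exactly where Hypothesis~\ref{hyp:comb-bi} (non-increasing $f$ near $0$) is essential. Making the two barriers converge to a common profile — rather than merely trapping $u$ between two fronts with a fixed gap — requires a careful choice of the points $x_n$ (equivalently, a careful bookkeeping of the front phase relative to the moving boundary $t\partial\W$) and is where most of the real work lies; the inequality $\bar x\cdot\nu\geq c^*(\nu)$ and the first barrier are comparatively routine given Propositions~\ref{pro:U}, \ref{pro:Wchar} and the invasion property.
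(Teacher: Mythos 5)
There are two genuine gaps, one in each half of the argument.

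\textbf{Gap in the first assertion.} You propose to slide a pulsating-front \emph{planar} subsolution $\phi_\nu(t-t_0,\cdot)=U_\nu(\cdot,\cdot\cdot\nu-c^*(\nu)(t-t_0))$ below $u$ and push it forward by the comparison principle. But this cannot be initiated: at any fixed $t_0$ the planar front is close to $1$ on the unbounded half-space $\{x\cdot\nu\ll c^*(\nu)t_0\}$, while $u(t_0,\cdot)$ is only known to be close to $1$ on a \emph{bounded} region (roughly $t_0\W$, by Definition~\ref{def:W} and the invasion property). So no translate of a planar front sits below $u$ at a finite time, and the standard front-as-lower-barrier scheme breaks down at step one. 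The paper handles this by replacing the planar profile with the ``almost radial'' function $v^\e(t,x)=U_\nu(x,\,|x-ty|-\rho_\e(t))$, centered at $y=\bar x-r\nu$: a bounded, ball-shaped subsolution whose expansion speed $\rho_\e'(t)$ starts below $r$ and only slowly approaches $r$. Moreover, $v^\e$ is not a subsolution everywhere — it is perturbed by $-\mu w^\e$ so that, using the weak-stability Hypothesis~\ref{hyp:comb-bi}, it becomes a \emph{strict} subsolution in the regions where its value is near $0$ or near $1$, which forces any contact point with $u$ to lie in the intermediate range $[\delta',1-\delta'']$. Your plan omits this construction, which is the substance of the proof.

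\textbf{Gap in the second assertion.} You correctly observe that no exterior ball condition is assumed, but you then conclude that the main task is to manufacture a matching \emph{upper} barrier near $\bar x$, propagated inward from $\R^N\setminus\overline\W$. This is the wrong mechanism and it cannot be made to work: if $\W$ protrudes outward at $\bar x$ (no exterior ball), the compact subsets of $\R^N\setminus\overline\W$ that carry the $u\to0$ information do not approach $\bar x$ in a way that lets you squeeze with a planar supersolution (indeed the paper's own example with the cone $\alpha>0$ shows the $\Omega$-limit set in such a direction may contain a non-planar $V$-shaped profile, so the inclusion you are trying to force by squeezing is not an equality there). The paper's actual argument for the second assertion requires \emph{no} upper barrier: after passing to the limit $\mu\to0^+$ and recentering, the subsolutions $v_\mu$ converge locally uniformly to a shifted planar-front profile $U_\nu(x,x\cdot\nu-ct+Z)$ with $c\leq c^*(\nu)$, and the recentered solutions $u_\mu$ converge to an entire solution $\tilde u$ that is $\geq$ this profile everywhere with equality at one point. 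Since the profile is a (weak) subsolution — strict if $c<c^*(\nu)$ — the strong parabolic maximum principle forces $\tilde u\equiv U_\nu(\cdot,\cdot\cdot\nu-ct+Z)$, which simultaneously gives $c=c^*(\nu)$ and identifies the limit as a translate of $\phi_\nu$. Both conclusions of the lemma fall out of the single touching-from-above argument; your plan to trap $u$ between two barriers replaces this with something fundamentally different and, as written, unachievable.

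In short: the proposal misses both the radial (rather than planar) shape of the subsolution and the strong-maximum-principle touching argument that replaces the two-sided squeeze you envisage.
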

	
\begin{lemma}\label{lem:exterior}
Assume that Hypotheses~$\ref{hyp:comb-bi}$-$\ref{hyp:c*>0}$ hold. Let $u$ be a solution to~\eqref{general} admitting an \ais\ $\W$. Assume that $\W$ satisfies an exterior ball condition at a point $\bar x\in\partial\W$, i.e., there exist $r>0$ and $\nu\in\Sph$ such that ${B_r(\bar x+r\nu)\subset\R^N\setminus\W}$. Then one has $\bar x\.\nu\leq c^*(\nu)$. Moreover, if $\bar x\.\nu= c^*(\nu)$ then 
$$\O_{\hat{\bar x}}(u)\supset\big\{\phi_\nu(t,\cdot+y):(t,y)\in\R\times\R^N\big\}.$$	
\end{lemma}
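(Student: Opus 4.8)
The plan is to prove Lemma~\ref{lem:exterior} by a sliding/comparison argument with a pulsating front moving in the direction $\nu$, exploiting the exterior ball condition together with the Freidlin-G\"artner-type estimates already available for $\W$. First I would establish the inequality $\bar x\cdot\nu\le c^*(\nu)$. Since $\W$ satisfies an exterior ball condition at $\bar x$ with normal $\nu$, for every $\varepsilon>0$ the point $(\bar x\cdot\nu+\varepsilon)\nu$ — more precisely points $x$ with $x\cdot\nu\ge\bar x\cdot\nu+\varepsilon$ and $\hat x$ close to $\hat{\bar x}$ — lie outside $\ol\W$, hence by~\eqref{ass-cpt} we have $u(t,tx)\to0$ there. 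On the other hand, the invasion property gives a positive spreading speed $\gamma$ in all directions, and more to the point the front $\phi_\nu(t,x)=U_\nu(x,x\cdot\nu-c^*(\nu)t)$ with its exponential decay~\eqref{Ueexp} can be used as a supersolution (after a suitable shift) on a half-space of the form $\{x\cdot\nu\ge ct\}$ for any $c<c^*(\nu)$ that dominates $u$ initially there, once we know $u(1,\cdot)$ is bounded away from $1$ outside a bounded set — but here, more directly, I would argue that if $\bar x\cdot\nu>c^*(\nu)$ then by taking a point $x$ slightly beyond $\bar x$ in the $\nu$ direction and using that the segment to $B_\gamma$ stays in $\W$ (the interior cone estimate~\eqref{inner-cone}), one can place a front $\phi_\nu$ below $u$ that forces $u(t,tx)\to1$ along that ray, contradicting the exterior ball condition at $\bar x$. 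This is essentially the content of~\eqref{formuleFG} localized at $\bar x$; I expect this half to be the easier part, following the spreading/blocking estimates of~\cite{R1} combined with Proposition~\ref{pro:Wchar}.

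The substantial part is the second assertion: assuming $\bar x\cdot\nu=c^*(\nu)$, show $\O_{\hat{\bar x}}(u)\supset\{\phi_\nu(t,\cdot+y):(t,y)\in\R\times\R^N\}$. Here I would argue as follows. Along the ray $\R^+\hat{\bar x}$, the solution transitions from $1$ (near the origin, by the interior estimate) to $0$ (beyond $\bar x$, by the exterior ball condition and~\eqref{ass-cpt}). Fix $\mu\in(0,1)$ and, for each large $t$, choose a point $x_t$ on or near the ray $t\hat{\bar x}$ (in fact on a line parallel to $\nu$, since the exterior ball gives a locally flat outer boundary with normal $\nu$) at which $u(t,x_t)=\mu$; such a point exists by continuity and the intermediate value theorem, and $|x_t|/t\to|\bar x|=c^*(\nu)/(\hat{\bar x}\cdot\nu)$. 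Take any sequence $t_n\to+\infty$ and set $\psi$ to be a locally uniform limit (up to subsequence, by parabolic estimates) of $u(t_n,x_{t_n}+\cdot)$. By construction $\psi(0)=\mu$ and $0\le\psi\le1$. The crux is to show $\psi(x)=U_\nu(x,x\cdot\nu)$ up to the normalization shift, and then to recover the whole orbit $\{\phi_\nu(t,\cdot+y)\}$ by letting $t_n$ additionally range over shifted times and using invariance of $\O$-limit sets under the flow together with the uniqueness-up-to-shift in Proposition~\ref{pro:U}. The hard part will be squeezing $\psi$ between two pulsating fronts moving with speed $c^*(\nu)$: one constructs a subsolution from $\phi_\nu$ shifted to lie below $u$ on the ``inner'' side — using that $u(t,\cdot)$ is close to $1$ on a ball of radius $\sim\gamma t$ around the origin, hence on a large region on the $1$-side of the moving hyperplane through $x_t$ — and a supersolution from $\phi_\nu$ shifted above $u$ on the ``outer'' side, using the exterior ball condition which forces $u$ to be small on a half-space retreating no faster than speed $c^*(\nu)$; Hypothesis~\ref{hyp:comb-bi} (weak stability of $0$ and $1$) is what makes these sub/supersolutions valid near the two end states and what controls the interface, exactly as in the one-dimensional bistable convergence proofs. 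The exponential bound~\eqref{Ueexp} and the continuity statements in Proposition~\ref{pro:U} are the technical tools; the uniqueness of the front profile then pins down $\psi$.

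Concretely the steps I would carry out are: (1) record that the invasion property holds (Proposition~\ref{pro:c>0invasion}) and fix $\gamma,\theta,\rho$ so that~\eqref{inner-cone}-\eqref{outer-cone} apply at $\bar x$; (2) prove $\bar x\cdot\nu\le c^*(\nu)$ by a blocking argument: if not, a shifted $\phi_\nu$ sits below $u$ along the ray and propagates, pushing $u\to1$ at a point the exterior ball says should go to $0$; (3) under $\bar x\cdot\nu=c^*(\nu)$, for $\varepsilon>0$ small use that $\{x\cdot\nu\ge(c^*(\nu)+\varepsilon)t,\ |x|\le Mt\}$ is eventually in the $0$-region and $\{x\cdot\nu\le(c^*(\nu)-\varepsilon)t\}$ is covered by the $1$-region to trap the transition layer in a strip of width $o(t)$ around the hyperplane $\{x\cdot\nu=c^*(\nu)t\}$ near the ray; (4) pick level-$\mu$ points $x_t$ in this strip, extract a limit $\psi\in\O_{\hat{\bar x}}(u)$, and show $\psi$ is an entire-in-the-sliding-sense solution caught between two translates of $U_\nu(\cdot,\cdot\nu)$ via the sub/supersolutions of step (3); (5) invoke the uniqueness up to shift (Proposition~\ref{pro:U}) to conclude $\psi=U_\nu(\cdot,\cdot\nu+a)$ for some $a$, and finally vary the normalization constant $\mu$ and shift the base times $t_n\mapsto t_n+s$ to obtain every $\phi_\nu(t,\cdot+y)$ in $\O_{\hat{\bar x}}(u)$. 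The main obstacle I anticipate is step (4)-(5): making rigorous that the limit along a diverging sequence of points genuinely reproduces the \emph{pulsating} front profile $U_\nu$ (not merely some entire solution between $0$ and $1$), which requires the squeezing to be tight enough that the uniqueness theorem applies — this is where Hypothesis~\ref{hyp:comb-bi} and the exponential estimate~\eqref{Ueexp} do the essential work.
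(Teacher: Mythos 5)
Your overall theme---compare $u$ with the front $\phi_\nu$ via sub/supersolutions, then invoke the uniqueness-up-to-shift of Proposition~\ref{pro:U}---points in the right direction, but the proposal has two concrete gaps, one of them fatal as written.

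First, your step (2) argues the inequality $\bar x\cdot\nu\le c^*(\nu)$ by placing a \emph{subsolution} built from $\phi_\nu$ below $u$ and claiming it ``forces $u(t,tx)\to1$'' at a point $x$ with $x\cdot\nu>c^*(\nu)$. This is the wrong comparison direction: a pulsating-front subsolution spreading at speed $c^*(\nu)$ never reaches points $tx$ with $x\cdot\nu>c^*(\nu)$, so it cannot contradict the exterior ball. The subsolution device is what one uses for the \emph{interior} ball (Lemma~\ref{lem:interior}, giving $\bar x\cdot\nu\ge c^*(\nu)$); for the exterior ball one must use a \emph{supersolution}. In the paper the inequality is not even a separate preliminary step---it drops out at the very end of the contact argument, because the limiting planar translate $U_\nu(x,x\cdot\nu-ct-Z)$ would be a \emph{strict} supersolution if $c>c^*(\nu)$, and the strong maximum principle at the contact forbids that.

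Second and more importantly, your steps (3)--(5) leave open exactly the point you flag as the main obstacle: trapping a limit $\psi$ between two different shifts $U_\nu(\cdot,\cdot\nu+a)$ and $U_\nu(\cdot,\cdot\nu+b)$ and appealing to ``uniqueness up to shift'' does \emph{not} identify $\psi$ with a front---an entire solution squeezed between two translates of a front need not itself be a front, and nothing in your outline forces $a=b$. The paper closes this gap with a first-contact argument against a purpose-built, \emph{almost-radial} supersolution $v^\e_\mu(t,x)=U_\nu(x,\rho_\e(t)-|x-ty|)+\mu\,w^\e(t,x)$, centred at the time-scaled ball centre $ty$ with a radius $\rho_\e(t)$ that grows very slowly from slope $r/2$ to $r$, plus a corrector $w^\e$ built from a principal eigenfunction and a $\chi$-cutoff with decay rate $\lambda<\lambda_0$ slower than that of $U_\nu$. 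A strict-supersolution estimate near the end states (this is where Hypothesis~\ref{hyp:comb-bi} and the eigenvalue bound $k_\nu(\lambda)\le c^*(\nu)\lambda/2$ enter) confines the first contact $(t^\e_\mu,x^\e_\mu)$ to the mid-range $[\delta',1-\delta'']$; sending $\e,\mu\to0$ flattens the sphere to a plane locally and produces a genuine tangency between $\tilde u$ and a single planar translate, to which the strong maximum principle applies and yields both $\tilde u\equiv U_\nu(\cdot,\cdot\nu-ct-Z)$ and $c=c^*(\nu)$. This two-parameter construction, and the choice of $\rho_\e$ making $\rho_\e'$ cross $r$ slowly, is the real engine of the proof; it is entirely missing from your plan, which also does not justify the claimed $o(t)$-thin transition strip that your squeezing step depends on.
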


Postponing the proofs of Lemmata~\ref{lem:interior}-\ref{lem:exterior} in Sections~\ref{sec32}-\ref{sec33}, we first complete the proof of Theorem~\ref{th2}.

\begin{proof}[Proof of Theorem~$\ref{th2}$]
First of all, the existence of $z\in\partial\W$ implies that $\W$ is non-empty, and thus it contains the origin, owing to Propositions~\ref{pro:c>0invasion} and~\ref{pro:Wchar}. In particular, $z\neq0$ (recall that $\W$ is open) hence $\hat z$ is well defined.

Next, on the one hand, Lemmata~\ref{lem:interior} and~\ref{lem:exterior} directly imply the statement $(i)$ of the theorem, and consequently also the inclusion
$$\O_{\hat{z}}(u)\supset\big\{\phi_\nu(t,\cdot+y):(t,y)\in\R\times\R^N\big\}.$$
On the other hand, the inclusion $\{0,1\}\subset \O_{\hat{z}}(u)$ is provided by Lemma~\ref{lem:01}.
\end{proof}


\subsection{Proof of Lemma~\ref{lem:interior}}\label{sec32}

Let $\bar x\in\partial\W$, $r>0$ and $\nu\in\Sph$ be as in the statement of the lemma. The proof amounts to showing that if $\bar x\.\nu\leq c^*(\nu)$, then necessarily $\bar x\.\nu=c^*(\nu)$ and moreover $\phi_\nu(t,\cdot+y)\in\O_{\hat{\bar x}}(u)$, for any $(t,y)\in\R\times\R^N$.
	
So, let us assume that 	
$$c:=\bar x\.\nu\leq c^*(\nu).$$
The strategy consists in constructing two sequences $\seq{t}$ in $\R_+$ and $\seq{x}$ in $\Z^N\setminus\{0\}$ satisfying $t_n\to+\infty$ and $\hat{x_n}\to\hat{\bar x}$, and such that $u(t+t_n,x+x_n)\to \t u(t,x)$ as $n\to+\infty$ locally uniformly in $(t,x)\in\R\times\R^N$, with $\t u(t,x)$ touching from above (a suitable translation of) the function $(t,x)\mapsto U(x, x\cdot \nu - ct)$, where $U=U_\nu$ (for simplicity, we here drop the subscript $\nu$) is the profile of the front~$\phi_\nu$, that is,
$$U(x,x\cdot\nu-c^*(\nu)t)=\phi_\nu(t,x).$$
Since the function $(t,x)\mapsto U (x, x\cdot \nu - ct)$ is a subsolution to \eqref{general} (because $c\leq c^*(\nu)$ and $\partial_zU(x,z)<0$, owing to Proposition~\ref{pro:U}), this would imply that $\t u(t,x)\equiv U (x, x\cdot \nu - ct)$ and eventually that $c=c^*(\nu)$.

In order to achieve our goal, we transform the profile $U(x,z)$ making it radial in~$z$, and then we perturb it in such a way that it touches the solution $u$ from below in some vicinity of points of the type $(t,t\bar x)$. 
	
\subsubsection*{Construction of the ``almost radial'' subsolution}

Since ${B_r(\bar x-r\nu)\subset\W}$ holds, for $0<r'<r$ there holds $\ol{B_{r'}(\bar x-r'\nu)}\setminus\{\bar x\}\subset\W$. Therefore, up to reducing $r$, we can assume without loss of generality that such a stronger property holds, i.e.
\Fi{strong-sphere}
\ol{B_r(y)}\setminus\{\bar x\}\subset\W,\quad\text{ with }\; y:=\bar x-r\nu.
\Ff
In addition, since Proposition~\ref{pro:Wchar} entails that $c:=\bar x\.\nu>0$, we can further assume without loss of generality that $r\leq c/2$.
	
We then consider a $C^1$ function $\rho:[0,+\infty)\to\R$ satisfying
\be\label{defrho}
\rho'>0\hbox{ in $[0,+\infty)$},\quad \rho(0)=\frac r2,\quad \rho(+\infty)=r.
\ee
For a given parameter $\e>0$, we call
\be\label{defrhoeps}
\rho_\e(t):=\int_0^t\rho(\e s)ds+\e t,\ \ t\in[0,+\infty).
\ee
The crucial property is that, for $0<\e\ll1$, the derivative of $\rho_\e$ in $[0,+\infty)$ goes from~$\rho_\e'<r$ to $\rho_\e'>r$ very slowly. For $\e>0$, we define
$$v^\e(t,x):=U\big(x, |x-ty| - \rho_\e(t)\big),\ \ (t,x)\in[0,+\infty)\times\R^N.$$
Next, we lower the function $v^\e$ in a suitable way.

We start by considering the periodic principal eigenvalues $k_\nu(\lambda)$ given by \eqref{evp} with~${e=\nu}$ and $\lambda>0$. We know from Lemma~\ref{lemkelambda} that, for $\lambda>0$ sufficiently small, it holds 
\Fi{klambda<}
k_\nu(\lambda)\leq \frac{r}2\lambda.
\Ff
We then take $\lambda>0$ small enough such that \eqref{klambda<} holds, and in addition $\lambda<\lambda_0$, where $\lambda_0>0$ is the quantity provided by Proposition~\ref{pro:U}, for which $U=U_\nu$ satisfies the estimate~\eqref{Ueexp}. We then consider a nonnegative $C^2$ function $\chi:\R\to\R$ satisfying
\be\label{defchi}\chi(z)=\begin{cases}
0 & \text{for }z\leq-1\\
e^{-\lambda z} & \text{for }z\geq1\,.\\
\end{cases}
\ee
Hence, the function $\chi$ decays to $0$ at $+\infty$ more slowly than $U(x,\.)$. Notice that $\partial_zU$ and its first-order derivatives have the same decay~\eqref{Ueexp} as $U$, as a consequence of the parabolic estimates applied to $\phi_\nu$, and the fact that $c^*(\nu)>0$. In particular, there exists a constant $K>0$ (independent of $\e>0$) such that
\Fi{U<chi}
\forall\,x\in\R^N, \ \forall\,z\geq1,\quad |\partial_zU(x,z)+|\partial^2_{zz}U(x,z)|+|\nabla\partial_zU(x,z)| \leq K\chi(z).
\Ff
Next, for $\e>0$, we call
$$
w^\e(t,x):=\chi\big(|x-ty| - \rho_\e(t)\big)\,\vp(x)+1,\ \ (t,x)\in[0,+\infty)\times\R^N,
$$
where $\vp$ is the periodic principal eigenfunction of the elliptic operator~\eqref{evp} associated with $k_\nu(\lambda)$, normalized by $(\max_{\R^N}\vp)(\max_\R\chi)=1$. Therefore, for every $\e>0$, there holds
\be\label{weps12}
1\le w^\e\le2\ \hbox{ in $[0,+\infty)\times\R^N$}.
\ee

Finally, for $\e>0$ and for another parameter $\mu>0$, we define
$$v^\e_\mu(t,x):=v^\e(t,x)-\mu w^\e(t,x),\ \ (t,x)\in[0,+\infty)\times\R^N.$$
This function $v^\e_\mu$ satisfies $v^\e_\mu\leq1-\mu$ in $[0,+\infty)\times\R^N$. Furthermore, since $U(x,z)\to0$ as $z\to+\infty$ uniformly in $x\in\R^N$, there exists $R_\mu>0$ (depending on~$\mu>0$ but not on~$\e>0$) such that
\Fi{ve<}
\forall\,\mu>0,\ \forall\,\e>0,\ \forall\,t\ge0,\ \forall\,x\in \R^N,\ \  |x-ty|-\rho_\e(t)\geq R_\mu\ \Longrightarrow\ v^\e_\mu(t,x)<0.
\Ff
Finally, if $0<\mu<1/4$ then $\mu w^\e<1/2$ in $[0,+\infty)\times\R^N$ and, since $U(x,z)\to1$ as $z\to-\infty$ uniformly in $x\in\R^N$, we can therefore find another quantity $L\in\R$ (independent of $\mu\in(0,1/4)$ and $\e>0$) such that
\Fi{ve>}
\forall\,0<\mu<\frac14,\ \forall\,\e>0,\ \forall\,t\ge0,\ \forall\,x\in \R^N,\ \  |x-ty|-\rho_\e(t)\leq L\ \Longrightarrow\ v^\e_\mu(t,x)>\frac14\,.
\Ff
{\it{For the rest of the proof, we restrict to $0<\mu<1/4$, so that both~\eqref{ve<} and~\eqref{ve>} hold.}}

\subsubsection*{The contact points with $u$}
 
On the one hand, by \eqref{strong-sphere} and \eqref{ass-cpt}, for each $\mu\in(0,1/4)$, there exists $\tau_\mu\geq0$ (independent of $\e>0$) such that
\Fi{u>m}\baa{l}
\displaystyle\forall\,0<\mu<\frac14,\ \forall\,\e>0,\ \forall\,t\geq \tau_\mu,\ \forall\,x\in \R^N,\vspace{3pt}\\
\qquad\qquad\qquad\qquad\qquad\displaystyle|x-ty|\leq\frac34 rt\ \Longrightarrow\ u(t,x)>1-\mu\geq \sup_{[0,+\infty)\times\R^N}\,v^\e_\mu.\eaa
\Ff
On the other hand, for each $\e>0$, consider the function $\rho_\e$ defined in~\eqref{defrhoeps} and call, for $t\ge0$,
$$\gamma(\e,t):=\frac34 rt-\rho_\e(t).$$
This function $\gamma$ is continuous with respect to $(\e,t)\in(0,+\infty)\times[0,+\infty)$ and even in~$[0,+\infty)\times[0,+\infty)$ by setting $\gamma(0,t)=rt/4$ for $t\ge0$. The function $\gamma$ is also decreasing with respect to $\e\ge0$ for any $t>0$, and it is strictly concave with respect to $t\ge0$ and satisfies $\gamma(\e,+\infty)=-\infty$ for any $\e>0$. Finally, it holds $\gamma(0,t)=rt/4\ge R_\mu$ for all $t\in[4R_\mu/r,+\infty)$. From all these properties one deduces that, for every $\mu\in(0,1/4)$, there exists $\e'_\mu>0$ small enough so that, for any $\e\in(0,\e'_\mu)$, there holds
$$\big\{ t>0\ : \ \gamma(\e,t)\geq R_\mu\big\}=[\ul\tau^\e_\mu,\ol\tau^\e_\mu],$$
for some $0<\ul\tau^\e_\mu<\ol\tau^\e_\mu$ satisfying
\be\label{limits1}
\lim_{\e\to0^+}\ul\tau^\e_\mu=\frac{4R_\mu}r,\qquad \lim_{\e\to0^+}\ol\tau^\e_\mu=+\infty.
\ee

As a consequence, owing to~\eqref{ve<}, one derives
$$\baa{l}
\displaystyle\forall\,0<\mu<\frac14,\ \forall\,0<\e<\e'_\mu,\ \forall\,t\in[\ul\tau^\e_\mu,\ol\tau^\e_\mu],\ \forall\,x\in \R^N,\vspace{3pt}\\
\qquad\qquad\qquad\qquad\qquad\qquad\displaystyle|x-ty|\ge\frac34 rt\ \Longrightarrow\ v^\e_\mu(t,x)<0\leq u(t,x).\eaa$$
Then, by taking $\e_\mu''\in(0,\e'_\mu)$ small enough so that
$$\forall\,0<\mu<\frac14,\ \forall\,0<\e<\e''_\mu,\ \ \ \ \ol\tau^\e_\mu-1>\max(\ul\tau^\e_\mu,\tau_\mu),$$
with $\tau_\mu$ given by~\eqref{u>m}, one eventually gets that
\be\label{ineqstricte}
\forall\,0<\mu<\frac14,\ \forall\,0<\e<\e''_\mu,\ \ \ \ u>v^\e_\mu\ \text{ in }[\ol\tau^\e_\mu-1,\ol\tau^\e_\mu]\times\R^N.
\ee
	
We now show that, for given $0<\mu<1/4$ and $\e>0$, one has $u<v^\e_\mu$ at some point, for~$t$ large enough. Specifically, on the one hand, since $\rho_\e(t)/t\to r+\e$ as $t\to+\infty$, one derives from \eqref{ve>} that, for $t$ sufficiently large,
$$\inf_{x\in B_{r+\e/2}(y)}v^\e_\mu(t,tx)>\frac14\,.$$
On the other hand, since $\bar x\in\partial B_r(y)\cap\partial\W$ and $\W$ is the interior of its closure, there exists $x\in B_{r+\e/2}(y)\setminus\ol\W$, hence property~\eqref{ass-cpt} yields $u(t,tx)\to0$ as $t\to+\infty$. We conclude that the quantity $\inf_{\R^N}(u-v^\e_\mu)(t,\cdot)$ is negative for $t$ large enough.
	
Thus, for $0<\mu<1/4$ and $\e\in(0,\e_\mu'')$, we can define
$$t^\e_\mu:=\inf\Big\{t>\ol\tau^\e_\mu\ : \ \inf_{\R^N}\,(u-v^\e_\mu)(t,\cdot)<	0\Big\}\ \in[\ol\tau^\e_\mu,+\infty).$$
Let $\seq{t}$ in $[\ol\tau^\e_\mu,+\infty)$ and $\seq{x}$ in $\R^N$ be such that $t_n\searrow  t^\e_\mu$ as $n\to+\infty$ and $(u-v^\e_\mu)(t_n,x_n)<0$. Property~\eqref{ve<} and the non-negativity of $u$ imply that 
$$|x_n-t_ny|<\rho_\e(t_n)+R_\mu$$
for every $n\in\N$, hence $\seq{x}$ converges (up to subsequences) to some point $x^\e_\mu\in\R^N$ satisfying
$$|x^\e_\mu-t^\e_\mu y|\leq\rho_\e(t^\e_\mu)+R_\mu$$
and $(u-v^\e_\mu)(t^\e_\mu,x^\e_\mu)\le0$, whence $t^\e_\mu>\ol\tau^\e_\mu>0$ and $(u-v^\e_\mu)(t^\e_\mu,x^\e_\mu)=0$ by~\eqref{ineqstricte} and the continuity of $u-v^\e_\mu$. Together with~\eqref{ineqstricte} again, we deduce that
\Fi{contactpoint}
\forall\,0<\mu<\frac14,\ \forall\,0<\e<\e_\mu'',\ \ \ \ \min_{[t^\e_\mu-1,t^\e_\mu] \times\R^N}(u-v^\e_\mu)=(u-v^\e_\mu)(t^\e_\mu,x^\e_\mu)=0.
\Ff

Next, for any $0<\mu<1/4$, using that $\rho_\e(t)\leq(r+\e)t$ for all $t\ge0$ and $\e>0$, and that $t^\e_\mu>\ol\tau^\e_\mu\to+\infty$ as $\e\to0^+$ by~\eqref{limits1}, we infer that $\limsup_{\e\to0^+}|x^\e_\mu/t^\e_\mu- y|\leq r$. Thus, for any sequence $\seq{\e}$ in $(0,\e''_\mu)$ converging to $0^+$, the points $x^{\e_n}_\mu/t^{\e_n}_\mu$ converge as $n\to+\infty$, up to subsequence, to some $\t x\in \ol{B_r(y)}$. But then the point  $\t x$ must coincide with $\bar x$, because otherwise~\eqref{strong-sphere} and~\eqref{ass-cpt} would yield $u(t^{\e_n}_\mu,x^{\e_n}_\mu)\to 1$ as $n\to+\infty$, while we know that $u(t^{\e_n}_\mu,x^{\e_n}_\mu)= v^\e_\mu(t^{\e_n}_\mu,x^{\e_n}_\mu)\leq 1-\mu$ for all $n\in\N$. Summing up, we have that
\Fi{contactpoint0}
\forall\,0<\mu<\frac14,\qquad \lim_{\e\to0^+}t^\e_\mu=+\infty,\qquad\lim_{\e\to0^+}\frac{x^\e_\mu}{t^\e_\mu}= \bar x.
\Ff
	
\subsubsection*{The strict subsolution property}

In the light of the property of the contact points derived above, we analyze the equation satisfied by the function $v^\e_\mu$ at time~$t$ in the vicinity of the point $t\bar x$. More precisely, we will focus on $x\in t B_\omega(\bar x)$ with given~$\omega\in(0,r/2)$, so in particular $x\neq ty$.

We start by considering the function $(t,x)\mapsto v^\e(t,x)=U\big(x, |x-ty| - \rho_\e(t)\big)$, with $\e>0$. Explicit computation~yields, for all $t>0$ and $x\in tB_\omega(\bar x)$,
\[\begin{split}
\partial_t v^\e-\dv(A\nabla v^\e)-q\.\nabla v^\e=&-\big(\rho_\e'(t)+\hat{x-ty}\.y\big)\partial_zU\\
&-\dv(A\nabla U)-\hat{x-ty}\,A\,\hat{x-ty}\,\partial^2_{zz}U\\
&-\dv\big(A\hat{x-ty}\big)\,\partial_z U-2\hat{x-ty}\,A\nabla \partial_z U\\
&-q\.\nabla U-q\.\hat{x-ty}\,\partial_z U,
\end{split}\]
where, in the above terms, $U$ is treated as $U=U(x,z)$ and eventually evaluated at $z=|x-ty| - \rho_\e(t)$. Exploiting the fact that $U=U_\nu$ solves
$$-c^*(\nu)\partial_zU-\dv(A\nabla U)-2\nu A\nabla\partial_z U-\dv(A\nu)\partial_z U-\nu A\nu\partial^2_{zz}U-q\.\nabla U-q\.\nu\partial_z U=f(x,U),$$
one then infers that, for all $t>0$ and $x\in tB_\omega(\bar x)$,
\Fi{ve}
\begin{split}
\partial_t v^\e-\dv(A\nabla v^\e)-q\.\nabla v^\e = f(x,v^\e)+\mc{R}(t,x),
\end{split}
\Ff
where
$$\baa{rcl}
\mc{R}(t,x) & \!\!\!:=\!\!\! & \left[c^*(\nu)-\rho_\e'(t)-\hat{x-ty}\!\.\!y+\dv\left(A\big(\nu-\hat{x-ty}\big)\right)+q\.\big(\nu-\hat{x-ty}\big)\right]\partial_zU\vspace{3pt}\\
& \!\!\!\!\!\! & +2\big(\nu-\hat{x-ty}\big)A\nabla\partial_z U+\big(\nu A\nu -\hat{x-ty}\,A\,\hat{x-ty}\big)\partial^2_{zz}U.\eaa$$

We now estimate the reminder $\mc{R}(t,x)$ when $t>0$ and $x\in t B_\omega(\bar x)$ (which will be the region where the contact points $(t^\e_\mu,x^\e_\mu)$ of $u$ and $v^\e_\mu$ will eventually lie, by~\eqref{contactpoint}-\eqref{contactpoint0}). First, recalling that $\omega<r/2$, we have that
\Fi{tonu}
\sup_{x\in t B_\omega(\bar x)} \big|\hat{x-ty}-\nu\big|=\sup_{x\in B_\omega(\bar x)} \big|\hat{x-y}-\nu\big|=\sup_{x\in B_\omega} \big|\hat{x+r\nu}-\nu\big|\leq \frac{2\omega}{r-\omega}\leq \frac{4\omega}{r}.
\Ff
On the one hand, since
$$\dv\left(A\big(\nu-\hat{x-ty}\big)\right)=(\dv A)\.\big(\nu-\hat{x-ty}\big)-\frac{1}{|x-ty|}\big(\Tr A -\hat{x-ty}\,A\,\hat{x-ty}\big),$$
we deduce from \eqref{tonu} that there exists $C>0$, depending only on $\|A\|_{W^{1,\infty}}$, $r$, and $N$, such that
\[\begin{split}
\forall\,0<\omega<\frac{r}2,\ \forall\,t>0,\ \ \ \ \sup_{x\in t B_\omega(\bar x)}	\Big|\dv\left(A\big(\nu-\hat{x-ty}\big)\right)\Big|&\leq C\omega+\frac{C}{t}.
\end{split}\]	
On the other hand, we find again by \eqref{tonu} that
\Fi{reminder0}
\forall\,0\!<\!\omega\!<\!\frac{r}2,\ \forall\,t\!>\!0,\ \forall\,x\!\in\! t B_\omega(\bar x),\ \ \left|\hat{x\!-\!ty}\.y\!-\!c\!+\!r\right|\leq|\nu\.y\!-\!c\!+\!r|\!+\!\frac{4\omega|y|}{r}\!=\!\frac{4\omega|y|}{r},
\Ff
from which, using $c\leq c^*(\nu)$ and $\rho\leq r$, we derive
$$\baa{l}
\displaystyle\forall\,\e>0,\ \forall\,0<\omega<\frac{r}2,\ \forall\,t>0,\ \forall\,x\in t B_\omega(\bar x),\vspace{3pt}\\
\qquad\qquad\qquad\displaystyle\rho_\e'(t)+\hat{x-ty}\.y \leq\rho(\e t)+\e+c-r+\frac{4\omega|y|}{r}\leq	c^*(\nu)+\e+\frac{4\omega|y|}{r}.\eaa$$
Using these estimates, together with $\partial_zU<0$ and $\nu A\nu-\zeta A\zeta=(\nu-\zeta)A(\nu+\zeta)$ for any~$\zeta\in\R^N$, we find another constant $C'>0$, depending only on $\|A\|_{W^{1,\infty}}$, $\|q\|_{\infty}$, $r$, $|y|$, and~$N$, such~that
\Fi{reminder2}\baa{l}
\displaystyle\forall\,\e>0,\ \forall\,0<\omega<\frac{r}2,\ \forall\,t>0,\ \forall\,x\in t B_\omega(\bar x),\vspace{3pt}\\
\qquad\qquad\qquad\qquad\displaystyle\mc{R}(t,x)\leq\left(\e+C'\omega+\frac{C}{t}\right)\times\big(|\partial_zU|+|\partial^2_{zz}U|+|\nabla\partial_zU|\big),\eaa
\Ff
where again the function $U(x,z)$ and its derivatives are evaluated at $(x,|x-ty| - \rho_\e(t))$.

We now turn to the function $(t,x)\mapsto w^\e(t,x)=\chi\big(|x-ty| - \rho_\e(t)\big)\vp(x)+1$. We start with the values $(t,x)$ such that $t>0$ and $|x-ty| - \rho_\e(t)>1$. For such values, we have $|x-ty|>\rho_\e(t)+1>1>0$, and we compute
\[\begin{split}
\partial_t w^\e-\dv(A\nabla w^\e)-q\.\nabla w^\e=&-\big(\rho_\e'(t)+\hat{x-ty}\.y\big)\chi'\vp\\
&-\dv(A\nabla \vp)\chi-\hat{x-ty}\,A\,\hat{x-ty}\chi''\vp\\
&-\dv\big(A\hat{x-ty}\big)\chi'\vp-2\chi'\hat{x-ty}\,A\nabla\vp \\
&-(q\.\nabla \vp)\chi-q\.\hat{x-ty}\chi'\vp,
\end{split}\]
where $\chi$ and its derivatives are evaluated at $|x-ty| - \rho_\e(t)$. Using that $\chi(z)=e^{-\lambda z}$ for $z\geq1$, and that $\vp$ solves \eqref{evp} with $e=\nu$, one derives, for $t>0$ and $|x-ty| - \rho_\e(t)>1$,
\[\partial_t w^\e-\dv(A\nabla w^\e)-q\.\nabla w^\e=\left(\lambda\big(\rho_\e'(t)+\hat{x-ty}\.y\big)-k_\nu(\lambda)\right)\chi\vp+\t{\mc{R}}(t,x)\chi,\]
with 
\[\begin{split}
\t{\mc{R}}(t,x):= &-\left[\dv\left(A\big(\nu-\hat{x-ty}\big)\right)+q\.\big(\nu-\hat{x-ty}\big)\right]\lambda\vp\\
&-2\lambda\big(\nu-\hat{x-ty}\big)A\nabla\vp+\big(\nu A\nu -\hat{x-ty}\,A\,\hat{x-ty}\big)\lambda^2\vp.
\end{split}\]	
On the one hand, recalling that we have \eqref{reminder0}, and also $\rho_\e'\geq0$ and $r\leq c/2$, we get, always for $t>0$ and $|x-ty| - \rho_\e(t)>1$, 
\[\begin{split}
\partial_t w^\e-\dv(A\nabla w^\e)-q\.\nabla w^\e &\geq\Big(r\lambda-\frac{4\omega|y|\lambda}r-k_\nu(\lambda)\Big)\chi\vp + \t{\mc{R}}(t,x)\chi\\
& \geq\Big(\frac{r\lambda}{2}-\frac{4\omega|y|\lambda}r\Big)\chi\vp + \t{\mc{R}}(t,x)\chi,
\end{split}\]
where, for the second inequality, we have used \eqref{klambda<}. On the other hand, the term $\t{\mc{R}}$ can be estimated exactly at the same way as $\mc{R}$ before, namely, one gets
\Fi{tR}
\forall\,\e>0,\ \forall\,0<\omega<\frac{r}{2},\ \forall\,t>0,\ \forall\,x\in t B_\omega(\bar x),\quad|\t{\mc{R}}(t,x)|\leq\t C\omega+\frac{\t C}{t},
\Ff
for some constant $\t C>0$ depending only on $\|A\|_{W^{1,\infty}}$, $\|q\|_{\infty}$, $r$, $|y|$, $N$, $\|\vp\|_{W^{1,\infty}}$, and $\lambda$. Summing up, we find the following estimate, up to increasing the constant $\t C$:
$$\baa{l}
\displaystyle\forall\,\e>0,\ \forall\,0<\omega<\frac{r}{2},\ \forall\,t>0,\ \forall\,x\in t B_\omega(\bar x),\vspace{3pt}\\
\qquad\displaystyle|x-ty|-\rho_\e(t)>1\ \Longrightarrow\ \partial_t w^\e-\dv(A\nabla w^\e)-q\.\nabla w^\e\geq\Big(\frac{r\lambda\vp}2 - \t C\omega-\frac{\t C}{t}\Big)\chi.\eaa$$

We gather together the above estimate for $w^\e$ with the estimate \eqref{reminder2} on the reminder~$\mc{R}$ of the expression~\eqref{ve} and with the estimate~\eqref{U<chi}. We then get that
$$\baa{l}
\displaystyle\forall\,0<\mu<\frac14,\ \forall\,\e>0,\ \forall\,0<\omega<\frac{r}{2},\ \forall\,t>0,\ \forall\,x\in t B_\omega(\bar x),\ \ |x-ty|-\rho_\e(t)>1\ \Longrightarrow\vspace{3pt}\\
\quad\displaystyle\partial_t v^\e_\mu\!-\!\dv(A\nabla v^\e_\mu)\!-\!q\.\nabla v^\e_\mu\leq f(x,v^\e)\!+\!\Big(K\e\!+\!KC'\omega\!+\!\frac{KC}{t}\!-\!\mu\Big(\frac{r\lambda\vp}2\!-\!\t C\omega\!-\!\frac{\t C}{t}\Big)\Big)\chi.\eaa$$
Since $r>0$, $\lambda>0$ and $\min_{\R^N}\varphi>0$, it follows that, for any given $0<\mu<1/4$, there exist $\e_\mu^+>0$ small enough, $\omega_\mu^+\in(0,r/2)$ small enough, and $T_\mu^+>0$ large enough, such that
$$\baa{l}
\displaystyle\forall\,0<\e<\e_\mu^+,\ \forall\,t\geq T_\mu^+,\ \forall\,x\in t B_{\omega_\mu^+}(\bar x),\vspace{3pt}\\
\displaystyle\qquad\qquad\qquad|x-ty|-\rho_\e(t)>1\ \Longrightarrow\ \partial_t v^\e_\mu-\dv(A\nabla v^\e_\mu)-q\.\nabla v^\e_\mu < \ f(x,v^\e).\eaa$$

Let us consider $\delta\in(0,1/2)$ from Hypothesis~\ref{hyp:comb-bi}, remember~\eqref{Ue}, and call
$$m:=\min_{x\in\R^N}U(x,1)\,>0,\qquad \delta':=\frac{\min(m,\delta)}2\,>0.$$
Notice that $\delta'/2<1/4$. For any $0<\mu<\delta'/2$ and $\e>0$, if $v^\e_\mu(t,x)\in[0,\delta')$ for some $t>0$ and $x\in\R^N$, then $0<v^\e(t,x)\leq v^\e_\mu(t,x)+2\mu<2\delta'=\min(m,\delta)$, which implies at once that $|x-ty|-\rho_\e(t)>1$ (by the choice of $m$ and $\partial_zU<0$) and also that $f(x,v^\e(t,x))\leq f(x,v^\e_\mu(t,x))$ owing to Hypothesis~\ref{hyp:comb-bi}. Therefore, we derive
\Fi{strict-sub+}
\begin{array}{l}
\displaystyle\forall\,0<\mu<\frac{\delta'}2,\ \forall\,0<\e<\e^+_\mu,\ \forall\,t\geq T_\mu^+,\ \forall\,x\in t B_{\omega_\mu^+}(\bar x),\\[5pt]
\qquad\qquad\qquad v^\e_\mu(t,x)\in[0,\delta')\ \implies\  \partial_t v^\e_\mu-\dv(A\nabla v^\e_\mu)-q\.\nabla v^\e_\mu <f(x,v^\e_\mu).
\end{array}
\Ff
{\it{For the rest of the proof, we restrict to $0<\mu<\delta'/2$.}}

We now turn to the region where $t>0$ and $|x-ty| - \rho_\e(t)<-1$. There, one has $v^\e_\mu(t,x)=v^\e(t,x)-\mu$, whence, by~\eqref{ve} and \eqref{reminder2},
\Fi{strict-sub-1}
\begin{array}{l}
\displaystyle\forall\,0<\mu<\frac{\delta'}2,\ \forall\,\e>0,\ \forall\,0<\omega<\frac{r}{2},\ \forall\,t>0,\ \forall\,x\in t B_{\omega}(\bar x),\vspace{3pt}\\
\qquad|x-ty| - \rho_\e(t)<-1\ \Longrightarrow\\[5pt]
\qquad\qquad\displaystyle\partial_t v^\e_\mu\!-\!\dv(A\nabla v^\e_\mu)\!-\!q\.\nabla v^\e_\mu \leq f(x,v^\e)\!+\!\Big(\e\!+\!C'\omega\!+\!\frac{C}{t}\Big)\|\partial_zU\|_{W^{1,\infty}}.
\end{array}
\Ff    
Reminding~\eqref{Ue}, we then set
$$M:=\max_{x\in\R^N}U(x,-1)\ <1,\qquad \delta'':=\min(1-M,\delta)>0.$$
For any $0<\mu<\delta'/2$ (which is less than $\delta$) and $\e>0$, if $v^\e_\mu(t,x)\in(1-\delta'',1]$ for some $t>0$ and $x\in\R^N$, then $1>v^\e(t,x)> v^\e_\mu(t,x)>\max(M,1-\delta)$, which implies at once that $|x-ty|-\rho_\e(t)<-1$ (because $v^\e(t,x)>M$ and $\partial_zU<0$), whence $v^\e_\mu(t,x)=v^\e(t,x)-\mu$, and moreover
$$f(x,v^\e_\mu(t,x))-f(x,v^\e(t,x))\big)\geq\min_{z\in\R^N,\ s\in[1-\delta,1-\mu]}\big(f(z,s)-f(z,s+\mu)\big).$$
Notice that the right-hand side of the previous inequality is a positive constant (depending on $\mu$) thanks to Hypothesis~\ref{hyp:comb-bi}. Using this in the estimate \eqref{strict-sub-1}, for any $0<\mu<\delta'/2$, we can then find $\e_\mu^->0$ small enough, $\omega_\mu^-\in(0,r/2)$ small enough, and $T_\mu^->0$ large enough, such that
\Fi{strict-sub-}
\begin{array}{l}
\displaystyle\forall\,0<\mu<\frac{\delta'}2,\ \forall\,0<\e<\e_\mu^-,\ \forall\,t\geq T_\mu^-,\ \forall\,x\in t B_{\omega_\mu^-}(\bar x),\\[5pt]
\qquad\qquad v^\e_\mu(t,x)\in(1-\delta'',1]\ \implies\ \partial_t v^\e_\mu-\dv(A\nabla v^\e_\mu)-q\.\nabla v^\e_\mu <f(x,v^\e_\mu).
\end{array}
\Ff    

Finally, if we call $\e_\mu:=\min(\e_\mu^+,\e_\mu^-)>0$, $\omega_\mu:=\min(\omega_\mu^+,\omega_\mu^-)\in(0,r/2)$, and $T_\mu:=\max(T_\mu^+,T_\mu^-)>0$, we get, from~\eqref{strict-sub+} and~\eqref{strict-sub-},
\Fi{strict-sub0}
\begin{array}{l}
\displaystyle\forall\,0<\mu<\frac{\delta'}{2},\ \forall\,0<\e<\e_\mu,\ \forall\,t\geq T_\mu,\ \forall\,x\in t B_{\omega_\mu}(\bar x),\\[5pt]
\qquad v^\e_\mu(t,x)\in[0,\delta')\!\cup\!(1\!-\!\delta'',1]\ \implies\ \partial_t v^\e_\mu\!-\!\dv(A\nabla v^\e_\mu)\!-\!q\.\nabla v^\e_\mu\!<\!f(x,v^\e_\mu).
\end{array}
\Ff
    
\subsubsection*{Conclusion}

Take any $0<\mu<\delta'/2\ (<1/4)$. Let $\e_\mu''>0$ be the quantity in~\eqref{contactpoint} and $\e_\mu>0$, $\omega_\mu\in(0,r/2)$, $T_\mu>0$ be the ones in~\eqref{strict-sub0}. By~\eqref{contactpoint0}, we can take $\e\in(0,\e_\mu'')$ small enough in such a way that the time-space contact point $(t^\e_\mu,x^\e_\mu)$  in~\eqref{contactpoint} satisfies $t^\e_\mu\geq T_\mu$ and $x^\e_\mu\in t^\e_\mu B_{\omega_\mu}(\bar x)$. If in addition we require $\e<\e_\mu$, then we necessarily have that
$$v^\e_\mu(t^\e_\mu,x^\e_\mu)\in[\delta',1-\delta''],$$ 
because otherwise, owing to~\eqref{contactpoint} and \eqref{strict-sub0}, the solution $u$ would touch from above the function $v^\e_\mu$ in  a point where the latter is a strict subsolution, which is impossible. Together with~\eqref{contactpoint0}, this shows that we can choose $\e=\e^\mu\in(0,\min(\e''_\mu,\e_\mu,\mu))$ small enough so that $v^{\e^\mu}_\mu(t^{\e^\mu}_\mu,x^{\e^\mu}_\mu)\in[\delta',1-\delta'']$, $t^{\e^\mu}_\mu\ge1/\mu$, and $|x^{\e^\mu}_\mu/t^{\e^\mu}_\mu-\bar{x}|\le\mu$. Hence,
$$\lim_{\mu\to0^+}\e^\mu=0,\qquad\lim_{\mu\to0^+}t^{\e^\mu}_\mu=+\infty,\qquad\lim_{\mu\to0^+}\frac{x^{\e^\mu}_\mu}{t^{\e^\mu}_\mu}= \bar x.$$

Next, again for any $0<\mu<\delta'/2$, we take $\zeta_\mu\in\Z^N$ such that $x_\mu:=x^{\e^\mu}_\mu-\zeta_\mu\in[0,1]^N$, and we call 
$$t_\mu:=t^{\e^\mu}_\mu,\ \ u_\mu(t,x):=u(t+t_\mu,x+\zeta_\mu),\ \ v_\mu(t,x):=v^{\e^\mu}_\mu(t+t_\mu,x+\zeta_\mu),$$
for $(t,x)\in[-t_\mu,+\infty)\times\R^N$. With these notations, we have
\Fi{contactpoint0*}
\lim_{\mu\to0^+}\e^\mu=0,\qquad\lim_{\mu\to0^+}t_\mu=+\infty,\qquad\lim_{\mu\to0^+}\frac{\zeta_\mu}{t_\mu}=\bar x,
\Ff	
as well as, by \eqref{contactpoint},
\Fi{u>v}
\forall\,(t,x)\in[-1,0]\times\R^N,\quad u_\mu(t,x)\geq v_\mu(t,x)
\Ff
and
\Fi{u=v}
u_\mu(0,x_\mu)=v_\mu(0,x_\mu)\in[\delta',1-\delta''].
\Ff
Furthermore, reminding the definition of the functions $v^\e_\mu$ and using the periodicity of~$U$ in the first variable, we derive
$$v_\mu(t,x)=U\big(x, |x+\zeta_\mu-(t+t_\mu)y| - \rho_{\e^\mu}(t+t_\mu)\big)-\mu O_\mu(t,x)$$
for all $(t,x)\in[-t_\mu,+\infty)\times\R^N$, with the function $O_\mu$ satisfying $1\leq O_\mu\leq 2$ in~$[-t_\mu,+\infty)\times\R^N$.
	
We claim that the level sets of $(t,x)\mapsto|x+\zeta_\mu-(t+t_\mu)y| - \rho_{\e^\mu}(t+t_\mu)$ converge locally uniformly to some shifting hyperplanes as $\mu\to0^+$. Indeed, on the one hand, since $t_\mu\to+\infty$ as $\mu\to0^+$, we find
\[\begin{split}
|x+\zeta_\mu-(t+t_\mu)y|-|\zeta_\mu-t_\mu y| &=\frac{|x-ty|^2+2 (x-ty)\.(\zeta_\mu-t_\mu y)}{|x+\zeta_\mu-(t+t_\mu)y|+|\zeta_\mu-t_\mu y|}\\
&=\frac{|x-ty|^2/t_\mu+2 (x-ty)\.(\zeta_\mu/t_\mu-y)}{|(x-ty)/t_\mu + \zeta_\mu/t_\mu-y|+|\zeta_\mu/t_\mu - y|}\\
&\displaystyle\mathop{\longrightarrow}_{\mu\to0^+}\ (x-ty)\.\nu=x\.\nu-(\bar x\.\nu-r)t,
\end{split}\]
locally uniformly with respect to $(t,x)\in\R\times\R^N$. On the other hand, since by~\eqref{u=v},
$$v_\mu(0,x_\mu)=U\big(x_\mu, |x^{\e^\mu}_\mu-t_\mu y| - \rho_{\e^\mu}(t_\mu)\big)-\mu O_\mu(t,x)\in[\delta',1-\delta''],$$
and $U(\.,-\infty)\equiv1$, $U(\.,+\infty)\equiv0$, we deduce that $|x^{\e^\mu}_\mu-t_\mu y| - \rho_{\e^\mu}(t_\mu)$ remains bounded as $\mu\to0^+$, and thus the same is true for $|\zeta_\mu-t_\mu y| - \rho_{\e^\mu}(t_\mu)$. One then gets from~\eqref{contactpoint0*}
$$\lim_{\mu\to0^+}\frac{\rho_{\e^\mu}(t_\mu)}{t_\mu}=\lim_{\mu\to0^+}\Big|\frac{\zeta_\mu}{t_\mu}-y\Big|=|\bar x-y|=r.$$	
Recalling the definition of $\rho_{\e^\mu}$, this condition rewrites as
$$\lim_{\mu\to0^+}\Big(\frac{1}{\e^\mu t_\mu}\int_0^{\e^\mu t_\mu}\rho(s)ds+\e^\mu\Big)=r,$$
but then, since $\rho$ is strictly increasing with $\rho(+\infty)=r$ (and $\e^\mu\to0$ as $\mu\to0^+$) we necessarily have that $\e^\mu t_\mu\to+\infty$ as $\mu\to0^+$. This allows us to conclude that
$$\rho_{\e^\mu}(t+t_\mu)-\rho_{\e^\mu}(t_\mu)=\frac{1}{\e^\mu}\int_{\e^\mu t_\mu}^{\e^\mu t_\mu+\e^\mu t}\rho(s)ds+\e^\mu t\to rt\as \mu\to0^+,$$
locally uniformly with respect to $t\in\R$. Summing up, we have that
\Fi{moving-plane}
|x+\zeta_\mu-(t+t_\mu)y| - \rho_{\e^\mu}(t+t_\mu)=x\.\nu-\bar x\.\nu t+|\zeta_\mu-t_\mu y|-\rho_{\e^\mu}(t_\mu)+o(1)\as\mu\to0^+,
\Ff
locally uniformly with respect to $(t,x)\in\R\times\R^N$. Therefore, considering a sequence~$\seq{\mu}$ converging to $0^+$ along which $|\zeta_{\mu_n}-t_{\mu_n} y|-\rho_{\e^{\mu_n}}(t_{\mu_n})$ (which is bounded) converges towards some limit, that we call $Z\in\R$, we eventually get
$$|x+\zeta_{\mu_n}-(t+t_{\mu_n})y| - \rho_{\e^{\mu_n}}(t+t_{\mu_n})\to x\.\nu-\bar x\.\nu t+Z\as n\to+\infty,$$
locally uniformly with respect to $(t,x)\in\R\times\R^N$. Recalling that $\bar x\.\nu=c$, we have thereby shown that
\be\label{localconv}
v_{\mu_n}(t,x)\to U(x, x\.\nu-ct+Z)\as n\to+\infty,
\ee
locally uniformly with respect to $(t,x)\in\R\times\R^N$. Finally, by parabolic estimates, the functions $u_{\mu_n}$ converge locally uniformly as $n\to+\infty$ (up to subsequences) to an entire solution $\t u$ of~\eqref{general}, and by \eqref{u>v}-\eqref{u=v} it holds that
\be\label{eqZ}
\min_{t\in[-1,0],\ x\in\R^N}\big(\t u(t,x)- U(x, x\.\nu-ct+Z)\big)=\t u(0,\t x)- U(\t x, \t x\.\nu+Z)=0,
\ee
where $\t x$ is the limit (of a convergent subsequence of) $(x_{\mu_n})_{n\in\N}$ in $[0,1]^N$. Since by Proposition~\ref{pro:U}, $U(x, x\.\nu-ct+Z)$ is a subsolution to~\eqref{general} because $c\leq c^*(\nu)$ and~$\partial_zU<0$ (and it is a strict subsolution if $c< c^*(\nu)$), the strong maximum principle yields
\be\label{eqZ2}
\t u(t,x)\equiv U(x, x\.\nu-ct+Z)\ \hbox{ for all $(t,x)\in[-1,+\infty)\times\R^N$},
\ee
and moreover $c=c^*(\nu)$. This shows that $\phi_\nu(-Z/c^*(\nu)+\theta,\cdot)\in\O_{\hat{\bar x}}(u)$ for all ${\theta\ge-1}$. Furthermore, the definition of $\O_{\hat{\bar x}}(u)$ and the local uniform convergence in~\eqref{localconv} immediately yield that $\phi_\nu(s,\cdot+\xi)\in\O_{\hat{\bar x}}(u)$ for each $(s,\xi)\in[-Z/c^*(\nu)-1,+\infty)\times\R^N$. But since $\phi_\nu(s,x)=\phi_\nu(s+k\cdot\nu/c^*(\nu),x+k)$ for all $k\in\Z^N$ and $(s,x)\in\R\times\R^N$ by definition of $\phi_\nu$, one finally gets that $\phi_\nu(s,\cdot+\xi)\in\O_{\hat{\bar x}}(u)$ for each 
$(s,\xi)\in\R\times\R^N$. This concludes the proof of Lemma~\ref{lem:interior}.\hfill$\Box$


\subsection{Proof of Lemma~\ref{lem:exterior}}\label{sec33}
	
The proof follows the same scheme as the one of Lemma~\ref{lem:interior}. Now $r>0$ and $\nu\in\Sph$ are given by the exterior ball condition at~$\bar x$. We assume that
$$c:=\bar x\.\nu\geq c^*(\nu),$$
and we need to show that $c=c^*(\nu)$ and moreover that~$\phi_\nu(t,\cdot+y)\in\O_{\hat{\bar x}}(u)$, for any $(t,y)\in\R\times\R^N$. 
	
\subsubsection*{Construction of the ``almost radial'' supersolution}

Up to reducing $r>0$, we can assume without loss of generality that the following stronger exterior ball condition~holds:
\Fi{strong-sphere-ext}
\ol{B_r(y)}\setminus\{\bar x\}\subset\R^N\setminus\ol\W,\quad\text{ with }\; y:=\bar x+r\nu.
\Ff
In addition, since $c=\bar x\.\nu\ge c^*(\nu)>0$, we can further assume without loss of generality that $r\leq c/2$. We then consider the same $C^1$ function $\rho:[0,+\infty)\to\R$ satisfying~\eqref{defrho} as in the proof of Lemma~\ref{lem:interior}, and $\rho_\e$ as in~\eqref{defrhoeps} for $\e>0$. For $\e>0$, we then define
$$v^\e(t,x):=U\big(x, \rho_\e(t)-|x-ty|\big),\ \ (t,x)\in[0,+\infty)\times\R^N,$$
and
$$w^\e(t,x):=\chi\big(\rho_\e(t)-|x-ty|\big)\,\vp(x)+1,\ \ (t,x)\in[0,+\infty)\times\R^N,$$
where $\chi:\R\to\R$ is a $C^2$ non-negative function satisfying~\eqref{defchi}, but with $0<\lambda<\lambda_0$ satisfying this time 
\Fi{lambda<<}
k_\nu(\lambda)\leq \frac{c^*(\nu)}2\,\lambda,
\Ff
and $\vp$ is the periodic eigenfunction of~\eqref{evp} associated with $k_\nu(\lambda)$, normalized by $(\max_{\R^N}\varphi)\,(\max_{\R}\chi)=1$, so that~\eqref{weps12} still holds. We recall that $\lambda<\lambda_0$ yields~\eqref{U<chi}, for some constant $K>0$ independent of $\e>0$.
	
Then, for $\e>0$ and $\mu>0$, we increase the function $v^\e$ by setting
$$v^\e_\mu(t,x):=v^\e(t,x)+\mu w^\e(t,x),\ \ (t,x)\in[0,+\infty)\times\R^N.$$
This function satisfies, on the one hand,
$$\inf_{[0,+\infty)\times\R^N} v^\e_\mu=\mu>0,$$
and furthermore, since $U(x,z)\to1$ as $z\to-\infty$ uniformly in $x\in\R^N$, there exists $R_\mu>0$, depending on $\mu>0$ but not on $\e>0$, such that
\Fi{ve>1}
\forall\,\mu>0,\ \forall\,\e>0,\ \forall\,t\ge0,\ \forall\,x\in \R^N,\ \ \rho_\e(t)-|x-ty|\leq -R_\mu\ \Longrightarrow\ 
v^\e_\mu(t,x)>1.
\Ff
On the other hand, if $0<\mu<1/4$ then $\mu w^\e<1/2$ in $[0,+\infty)\times\R^N$ and, since $U(x,z)\to0$ as $z\to+\infty$ uniformly in $x\in\R^N$, we can therefore find another quantity $L\in\R$, independent of $\mu\in(0,1/4)$ and $\e>0$, such that
\Fi{ve<delta}
\forall\,0<\mu<\frac14,\ \forall\,\e>0,\ \forall\,t\ge0,\ \forall\,x\in \R^N,\ \ \rho_\e(t)-|x-ty|\geq L\ \Longrightarrow\ v^\e_\mu(t,x)<\frac34\,.
\Ff
{\it{For the rest of the proof, we restrict to $0<\mu<1/4$, so that both~\eqref{ve>1} and~\eqref{ve<delta} hold.}}

\subsubsection*{The contact points with $u$}
 
On the one hand, by \eqref{strong-sphere-ext} and \eqref{ass-cpt}, for each $\mu\in(0,1/4)$, there exists $\tau_\mu\geq0$, independent of $\e>0$, such that
\Fi{u<m}\baa{l}
\displaystyle\forall\,0<\mu<\frac14,\ \forall\,\e>0,\ \forall\,t\geq \tau_\mu,\ \forall\,x\in \R^N,\vspace{3pt}\\
\qquad\qquad\qquad\qquad\qquad\displaystyle\ \ |x-ty|\leq\frac34 rt\ \Longrightarrow\ u(t,x)<\mu=\inf_{[0,+\infty)\times\R^N}\,v^\e_\mu.\eaa
\Ff
On the other hand, consider as in the proof of Lemma~\ref{lem:interior} the following function:
$$\gamma(\e,t):=\frac34 rt-\rho_\e(t),\ \ \e>0,\ t\in[0,+\infty),$$
extended by $\gamma(0,t)=rt/4$. We have seen before that, for every $\mu\in(0,1/4)$, there exists $\e'_\mu>0$ small enough so that, for any $\e\in(0,\e'_\mu)$, there holds
$$\big\{ t>0:\gamma(\e,t)\geq R_\mu\big\}=[\ul\tau^\e_\mu,\ol\tau^\e_\mu],$$
for some $0<\ul\tau^\e_\mu<\ol\tau^\e_\mu$ satisfying
\be\label{limits5}
\lim_{\e\to0^+}\ul\tau^\e_\mu=4\frac{R_\mu}r,\qquad \lim_{\e\to0^+}\ol\tau^\e_\mu=+\infty.
\ee
As a consequence, owing to~\eqref{ve>1}, one derives
$$\baa{l}
\displaystyle\forall\,0<\mu<\frac14,\ \forall\,0<\e<\e'_\mu,\ \forall\,t\in[\ul\tau^\e_\mu,\ol\tau^\e_\mu],\ \forall\,x\in \R^N,\vspace{3pt}\\
\displaystyle\qquad\qquad\qquad\qquad\qquad\qquad|x-ty|\ge\frac34 rt\ \Longrightarrow\ v^\e_\mu(t,x)>1\geq u(t,x).\eaa$$
Then, taking $\e_\mu''\in(0,\e'_\mu)$ small enough so that $\ol\tau^\e_\mu-1>\max\{\ul\tau^\e_\mu,\tau_\mu\}$ for all $\e\in(0,\e_\mu'')$, with $\tau_\mu$ given by~\eqref{u<m}, eventually yields
\be\label{ineq5}
\forall\,0<\mu<\frac14,\ \forall\,0<\e<\e_\mu'',\quad u<v^\e_\mu\ \text{ in }[\ol\tau^\e_\mu-1,\ol\tau^\e_\mu]\times\R^N.
\ee
	
We now show that, for given $0<\mu<1/4$ and $\e>0$, the inequality $v^\e_\mu< u$ holds at some point, for~$t$ sufficiently large. Specifically, on the one hand, since $\rho_\e(t)/t\to r+\e$ as $t\to+\infty$, one derives from \eqref{ve<delta} that, for $t$ sufficiently large,
$$\sup_{x\in B_{r+\e/2}(y)}v^\e_\mu(t,tx)<\frac34\,.$$
On the other hand, since $\bar x\in\partial B_r(y)\cap\partial\W$, there exists $x\in B_{r+\e/2}(y)\cap\W$, hence property~\eqref{ass-cpt} implies that $u(t,tx)\to1$ as $t\to+\infty$. We conclude that the quantity $\inf_{\R^N}(v^\e_\mu-u)(t,\cdot)$ is negative for $t$ large enough. Thus, for $0<\mu<1/4$ and $0<\e<\e_\mu''$, we can define
$$t^\e_\mu:=\inf\Big\{t>\ol\tau^\e_\mu\ : \ \inf_{\R^N}(v^\e_\mu-u)(t,\cdot)<0\Big\}\ \in[\ol\tau^\e_\mu,+\infty).$$
Using properties~\eqref{ve>1} and~\eqref{ineq5}, one readily shows as in the proof of~\eqref{contactpoint} that $t^\e_\mu>\ol\tau^\e_\mu$ and
\Fi{contactpointSUP}
\forall\,0<\mu<\frac14,\ \forall\,0<\e<\e_\mu'',\quad\min_{[t^\e_\mu-1,t^\e_\mu] \times\R^N}(v^\e_\mu-u)=(v^\e_\mu-u)(t^\e_\mu,x^\e_\mu)=0,
\Ff
with $x^\e_\mu$ satisfying
$$|x^\e_\mu-t^\e_\mu y|<\rho_\e(t^\e_\mu)+R_\mu.$$
Then, since $\rho_\e(t)\leq(r+\e)t$ for all $t\ge0$ and $t^\e_\mu\geq\ol\tau^\e_\mu\to+\infty$ as $\e\to0^+$ by~\eqref{limits5}, we eventually infer that $\limsup_{\e\to0^+}|x^\e_\mu/t^\e_\mu- y|\leq r$. But then, we necessarily have that $x^\e_\mu\to\bar x$ as $\e\to0^+$, because otherwise the inclusion \eqref{strong-sphere-ext} and the second limit in~\eqref{ass-cpt} would lead to a contradiction with $u(t^\e_\mu,x^\e_\mu)=v^\e_\mu(t^\e_\mu,x^\e_\mu)\geq\mu>0$. Therefore, we have
\Fi{contactpoint0SUP}
\forall\,0<\mu<\frac14,\ \ \ \ \lim_{\e\to0^+}t^\e_\mu=+\infty,\quad\lim_{\e\to0^+}\frac{x^\e_\mu}{t^\e_\mu}= \bar x.
\Ff

\subsubsection*{The strict supersolution property}

We analyze the equation satisfied by the function~$v^\e_\mu$ at time $t>0$ in the vicinity of the point $t\bar x$. More precisely, we will focus on the ball~$t B_\omega(\bar x)$ with given~$\omega\in(0,r/2)$, so in particular $x\neq ty$ for every $x\in tB_\omega(\bar x)$. In order to exploit the computation already made in the proof of Lemma~\ref{lem:interior}, observe that $v^\e$ and $w^\e$ are defined exactly as in that proof, but with $U(x,|x-ty|-\rho_\e(t))$ and $\chi(|x-ty|-\rho_\e(t))$ now replaced by $U(x,\rho_\e(t)-|x-ty|))$ and $\chi(\rho_\e(t)-|x-ty|)$. One then derives
\Fi{veSUP}
\partial_t v^\e-\dv(A\nabla v^\e)-q\.\nabla v^\e = f(x,v^\e)+\mc{R}(t,x),
\Ff
where
$$\baa{rcl}
\mc{R}(t,x) & \!\!\!:=\!\!\! & \displaystyle\left[c^*(\nu)+\rho_\e'(t)+\hat{x-ty}\!\.\!y+\dv\left(A\big(\nu+\hat{x-ty}\big)\right)+q\.\big(\nu+\hat{x-ty}\big)\right]\partial_zU\vspace{3pt}\\
& \!\!\!\!\!\! & \displaystyle+2\Big(\nu+\hat{x-ty}\Big)A\nabla\partial_z U+\left(\nu A\nu -\hat{x-ty}A\hat{x-ty}\right)\partial^2_{zz}U\eaa$$	
and where $U$ is treated as $U=U(x,z)$ and eventually evaluated at $z=\rho_\e(t)-|x-ty|$.	We now deal with the reminder $\mc{R}(t,x)$ using the estimates obtained in the proof Lemma~\ref{lem:interior}. To do so, we write $y=\bar x-r\t\nu$ with $\t\nu=-\nu$, whence the estimate \eqref{tonu} holds with $\nu$ replaced by $\t\nu$. From that, and from inequalities $c\geq c^*(\nu)$ and $\rho\leq r$, one~gets
\Fi{reminder1SUP}\baa{l}
\displaystyle\forall\,0<\omega<\frac{r}{2},\ \forall\,t>0,\ \forall\,x\in t B_\omega(\bar x),\vspace{3pt}\\
\displaystyle\qquad\qquad\rho_\e'(t)+\hat{x-ty}\.y \leq\rho(\e t)+\e-c-r+\frac{4\omega|y|}{r}\leq-c^*(\nu)+\e+\frac{4\omega|y|}{r}.\eaa
\Ff	
For the other terms in the above expression of $\mc{R}$, observe that, after replacing $\nu$ with~$-\t\nu$, they are the same as in the expression of $\mc{R}$ in the proof of Lemma~\ref{lem:interior}, up to a sign. This leads to the following estimate (recall that $\partial_zU<0$):
\Fi{reminder2>}\baa{l}
\displaystyle\forall\,\epsilon>0,\ \forall\,0<\omega<\frac{r}{2},\ \forall\,t>0,\ \forall x\in t B_\omega(\bar x),\vspace{3pt}\\
\displaystyle\qquad\qquad\qquad\mc{R}(t,x)\geq-\left(\e+C'\omega+\frac{C}{t}\right)\times\big(|\partial_zU|+|\partial^2_{zz}U|+|\nabla\partial_zU|\big),\eaa
\Ff
where again $U$ and its derivatives are evaluated at $(x,\rho_\e(t)-|x-ty|)$, with a constant $C>0$ depending only on $\|A\|_{W^{1,\infty}}$, $r$, $N$, and another constant $C'>0$ depending only on the same terms as well as~$\|q\|_{\infty}$ and $|y|$.

Let us turn to the function $(t,x)\mapsto w^\e(t,x)=\chi(\rho_\e(t)-|x-ty|)\,\vp(x)+1$. Here again we exploit the computation of the proof of Lemma~\ref{lem:interior}, taking into account the replacement of $\chi(|x-ty|-\rho_\e(t))$ with $\chi(\rho_\e(t)-|x-ty|)$. In particular, in the region $\rho_\e(t)-|x-ty|>1$, one uses the previous computation for the case $|x-ty|-\rho_\e(t)>1$, with $\chi(|x-ty|-\rho_\e(t))=e^{-\lambda(|x-ty|-\rho_\e(t))}$ now replaced by $\chi(\rho_\e(t)-|x-ty|)=e^{-\lambda(\rho_\e(t)-|x-ty|)}$. Then one just needs to replace $\lambda$ with $-\lambda$. Observe that such replacement in the equation satisfied by the eigenfunction $\vp$ is equivalent to the replacement of $\nu$ by $\t\nu=-\nu$. As a consequence, one derives, for $t>0$ and $\rho_\e(t)-|x-ty|>1$ with $x\neq ty$,
$$\partial_t w^\e-\dv(A\nabla w^\e)-q\.\nabla w^\e=\left(-\lambda\big(\rho_\e'(t)+\hat{x-ty}\.y\big)-k_\nu(\lambda)\right)\chi\vp+\t{\mc{R}}(t,x)\chi,$$
where $\chi$ is evaluated at $\rho_\e(t)-|x-ty|$, and
\[\begin{split}
\t{\mc{R}}(t,x):= &-\left[\dv\left(A\big(\nu+\hat{x-ty}\big)\right)+q\.\big(\nu+\hat{x-ty}\big)\right]\lambda\vp\\
&-2\lambda\big(\nu+\hat{x-ty}\big)A\nabla\vp+\big(\nu A\nu -\hat{x-ty}\,A\,\hat{x-ty}\big)\lambda^2\vp.
\end{split}\]	
Like $\mc{R}$, the function $\t{\mc{R}}$ satisfies the same estimate as in the previous proof, i.e.~\eqref{tR}. Then, using also \eqref{reminder1SUP} and \eqref{lambda<<} one gets
$$\baa{l}
\displaystyle\forall\,\e>0,\ \forall\,0<\omega<\frac{r}{2},\ \forall\,t>0,\ \forall\,x\in t B_\omega(\bar x),\ \ \rho_\e(t)-|x-ty|>1\ \Longrightarrow\vspace{3pt}\\
\displaystyle\qquad\qquad\qquad\partial_t w^\e-\dv(A\nabla w^\e)-q\.\nabla w^\e\geq\Big(\lambda \big(\frac{c^*(\nu)}2-\e\big)\vp- \t C'\omega-\frac{\t C'}{t}\Big)\chi,\eaa$$
where $\chi$ is evaluated at $\rho_\e(t)-|x-ty|$, and $\t C'>0$ is a positive constant depending only on $\|A\|_{W^{1,\infty}}$, $\|q\|_{\infty}$, $r$, $|y|$, $N$, $\|\vp\|_\infty$ and $\lambda$.

We gather together the above estimate with the estimate \eqref{reminder2>} on the reminder~$\mc{R}$ of the expression~\eqref{veSUP}. Recalling also the estimate~\eqref{U<chi} for $U$, we derive
$$\baa{l}
\displaystyle\forall\,0<\mu<\frac14,\ \forall\,\e>0,\ \forall\,0<\omega<\frac{r}{2},\ \forall\,t>0,\ \forall\,x\in t B_\omega(\bar x),\quad\rho_\e(t)-|x-ty|>1\ \Longrightarrow\vspace{3pt}\\
\qquad\qquad\baa{rcl}\partial_t v^\e_\mu-\dv(A\nabla v^\e_\mu)-q\.\nabla v^\e_\mu & \geq & \displaystyle f(x,v^\e)-K\Big(\e+C'\omega+\frac{C}{t}\Big)\chi\vspace{3pt}\\
& & \displaystyle+\mu\Big(\lambda\big(\frac{c^*(\nu)}2-\e\big)\vp - \t C'\omega-\frac{\t C'}{t}\Big)\chi.\eaa\eaa$$
Since $\lambda>0$, $c^*(\nu)>0$ and $\min_{\R^N}\vp>0$, it follows that, for any given $0<\mu<1/4$, there exist $\e_\mu^+>0$ small enough, $\omega_\mu^+\in(0,r/2)$ small enough, and $T_\mu^+>0$ large enough, such that
$$\baa{l}\forall\,0<\e<\e_\mu^+,\ \forall\,t\geq T_\mu^+,\ \forall\,x\in t B_{\omega_\mu^+}(\bar x),\vspace{3pt}\\
\qquad\qquad\qquad\rho_\e(t)-|x-ty|>1\ \Longrightarrow\ \partial_t v^\e_\mu-\dv(A\nabla v^\e_\mu)-q\.\nabla v^\e_\mu > \ f(x,v^\e).\eaa$$

Let us call now
$$m:=\min_{x\in\R^N}U(x,1)>0,\qquad \delta':=\min(m,\delta)>0.$$
For any $0<\mu<1/4$ and $\e>0$, if $v^\e_\mu(t,x)<\delta'$ for some $t>0$ and $x\in\R^N$, then $0<v^\e(t,x)< v^\e_\mu(t,x)<\delta'=\min(m,\delta)$, which implies at once that $\rho_\e(t)-|x-ty|>1$ (by the choice of $m$ and $\partial_zU<0$) and also that $f(x,v^\e_\mu(t,x))\leq f(x,v^\e(t,x))$ owing to Hypothesis~\ref{hyp:comb-bi}. Therefore, we derive
\Fi{strict-sup+}\begin{array}{l}
\displaystyle\forall\,0<\mu<\frac14,\ \forall\,0<\e<\e_\mu^+,\ \forall\,t\geq T_\mu^+,\ \forall\,x\in t B_{\omega_\mu^+}(\bar x),\\[5pt]
\quad\qquad\qquad\qquad v^\e_\mu(t,x)<\delta'\ \implies\ \partial_t v^\e_\mu-\dv(A\nabla v^\e_\mu)-q\.\nabla v^\e_\mu >f(x,v^\e_\mu).
\end{array}
\Ff

Let us turn now to the region where $t>0$ and $\rho_\e(t)-|x-ty|<-1$. There, one has $v^\e_\mu(t,x)=v^\e(t,x)+\mu$, whence, by~\eqref{veSUP} and \eqref{reminder2>},
\Fi{strict-sup-1}
\begin{array}{l}
\displaystyle\!\!\!\!\!\forall\,0\!<\!\mu\!<\!\frac14,\,\forall\,\e\!>\!0,\,\forall\,0\!<\!\omega\!<\!\frac{r}{2},\,\forall\,t\!>\!0,\,\forall\,x\!\in\! t B_{\omega}(\bar x),\ \rho_\e(t)\!-\!|x\!-\!ty|\!<\!-1\,\Longrightarrow\\[5pt]
\qquad\displaystyle\partial_t v^\e_\mu-\dv(A\nabla v^\e_\mu)-q\.\nabla v^\e_\mu \geq f(x,v^\e)-\Big(\e+C'\omega+\frac{C}{t}\Big)\|\partial_zU\|_{W^{1,\infty}}.
\end{array}
\Ff    
We set
$$M:=\max_{x\in\R^N}U(x,-1)<1,\qquad \delta'':=\frac12\min(1-M,\delta)>0.$$
Note that $0<\delta''/2<1/4$. For any $0<\mu<\delta''/2$ and $\e>0$, if $v^\e_\mu(t,x)\in(1-\delta'',1]$ for some $t>0$ and $x\in\R^N$, one has $1>v^\e(t,x)\geq v^\e_\mu(t,x)-2\mu>\max(M,1-\delta)$, which in turn implies at once that $\rho_\e(t)-|x-ty|<-1$ (because $ v^\e(t,x)>M$ and $\partial_zU<0$), whence $v^\e_\mu(t,x)=v^\e(t,x)+\mu$ and moreover
$$\big(f(x,v^\e(t,x))-f(x,v^\e_\mu(t,x))\big)\geq\min_{z\in\R^N,\ s\in[1-\delta,1-\mu]}\big(f(z,s)-f(z,s+\mu)\big).$$
Notice that the right-hand side of the previous inequality is a positive constant (depen\-ding on $\mu$), by Hypothesis~\ref{hyp:comb-bi}. Using this in the estimate \eqref{strict-sup-1}, for any $0<\mu<\delta''/2$, we can then find $\e_\mu^->0$ small enough, $\omega_\mu^-\in(0,r/2)$ small enough, and $T_\mu^->0$ large enough, such that
\Fi{strict-sup-}\begin{array}{l}
\displaystyle\forall\,0<\mu<\frac{\delta''}2,\ \forall\,0<\e<\e_\mu^-,\ \forall\,t\geq T_\mu^-,\ \forall\,x\in t B_{\omega_\mu^-}(\bar x),\vspace{3pt}\\
\qquad\qquad v^\e_\mu(t,x)\in(1-\delta'',1]\ \implies\ \partial_t v^\e_\mu-\dv(A\nabla v^\e_\mu)-q\.\nabla v^\e_\mu >f(x,v^\e_\mu).
\end{array}
\Ff    
   		
Finally, calling $\e_\mu:=\min(\e_\mu^+,\e_\mu^-)>0$, $\omega_\mu:=\min(\omega_\mu^+,\omega_\mu^-)\in(0,r/2)$, and $T_\mu:=\max(T_\mu^+,T_\mu^-)>0$, we get from \eqref{strict-sup+} and~\eqref{strict-sup-} that
\Fi{strict-sub}\begin{array}{l}
\displaystyle\forall\,0<\mu<\frac{\delta''}2,\ \forall\,0<\e<\e_\mu,\ \forall\,t\geq T_\mu,\ \forall\,x\in t B_{\omega_\mu}(\bar x),\\[5pt]
\quad v^\e_\mu(t,x)\in(0,\delta')\cup(1\!-\!\delta'',1]\ \implies\  \partial_t v^\e_\mu\!-\!\dv(A\nabla v^\e_\mu)\!-\!q\.\nabla v^\e_\mu\!>\!f(x,v^\e_\mu).
\end{array}
\Ff

\subsubsection*{Conclusion}

Take $0<\mu<\delta''/2\ (<1/4)$. Let $\e_\mu''>0$ be the quantity in~\eqref{contactpointSUP} and $\e_\mu>0$, $\omega_\mu\in(0,r/2)$ and $T_\mu>0$ be the ones in~\eqref{strict-sub}. By~\eqref{contactpoint0SUP} we can take $\e\in(0,\e_\mu'')$ small enough in such a way that the contact point $(t^\e_\mu,x^\e_\mu)$  in~\eqref{contactpointSUP} satisfies $t^\e_\mu\geq T_\mu$ and $x^\e_\mu\in t^\e_\mu B_{\omega_\mu}(\bar x)$. If in addition we require $\e<\e_\mu$, then we necessarily have that $u(t^\e_\mu,x^\e_\mu)=v^\e_\mu(t^\e_\mu,x^\e_\mu)\in[\delta',1-\delta'']$, because otherwise, owing to~\eqref{contactpointSUP} and \eqref{strict-sub}, the solution $u$ would touch from below the function $v^\e_\mu$ in  a point where the latter is a strict supersolution, which is impossible. Together with~\eqref{contactpoint0SUP}, this shows that we can choose $\e=\e^\mu\in(0,\min(\e_\mu,\e''_\mu,\mu))$ small enough so that $v^{\e^\mu}_\mu(t^{\e^\mu}_\mu,x^{\e^\mu}_\mu)\in[\delta',1-\delta'']$, $t_\mu^{\e^\mu}\ge1/\mu$, and $|x_\mu^{\e^\mu}/t_\mu^{\e^\mu}-\bar{x}|\le\mu$. Hence,
$$\lim_{\mu\to0^+}\e^\mu=0,\qquad\lim_{\mu\to0^+}t^{\e^\mu}_\mu=+\infty,\qquad\lim_{\mu\to0^+}\frac{x^{\e^\mu}_\mu}{t^{\e^\mu}_\mu}= \bar x.$$

Next, for each $0<\mu<\delta''/2$ we take $\zeta_\mu\in\Z^N$ such that $x_\mu:=x^{\e^\mu}_\mu-\zeta_\mu\in[0,1]^N$. We call 
$$t_\mu:=t^{\e^\mu}_\mu,\qquad u_\mu(t,x):=u(t+t_\mu,x+\zeta_\mu),\qquad v_\mu(t,x):=v^{\e^\mu}_\mu(t+t_\mu,x+\zeta_\mu),$$ 
for $(t,x)\in[-t_\mu,+\infty)\times\R^N$. With these notations, we have
\Fi{contactpoint0*SUP}
\lim_{\mu\to0^+}\e^\mu=0,\qquad\lim_{\mu\to0^+}t_\mu=+\infty,\qquad\lim_{\mu\to0^+}\frac{\zeta_\mu}{t_\mu}=\bar x.
\Ff	
as well as, by \eqref{contactpointSUP},
\Fi{u<v}
\forall\,(t,x)\in[-1,0]\times\R^N,\quad u_\mu(t,x)\leq v_\mu(t,x)
\Ff
and
\Fi{u=vSUP}
u_\mu(0,x_\mu)=v_\mu(0,x_\mu)\in[\delta',1-\delta''].
\Ff
One further has that
$$v_\mu(t,x)=U\big(x, \rho_{\e^\mu}(t+t_\mu)-|x+\zeta_\mu-(t+t_\mu)y| \big)+\mu O_\mu(t,x)$$
for all $(t,x)\in[-t_\mu,+\infty)\times\R^N$, with $1\leq O_\mu(t,x)\leq 2$.

We have shown in the proof of Lemma~\ref{lem:interior} that the level sets of the functions $(t,x)\mapsto|x+\zeta_\mu-(t+t_\mu)y| - \rho_{\e^\mu}(t+t_\mu)$ converge locally uniformly to some shifting hyperplanes as $\mu\to0^+$. We point out that this only relied on~\eqref{contactpoint0*}, which is exactly~\eqref{contactpoint0*SUP} above, as well as on the boundedness of $|\zeta_\mu-t_\mu y| - \rho_{\e^\mu}(t_\mu)$ following from \eqref{u=v}, which is exactly~\eqref{u=vSUP} above, and the fact that $U(\.,z)\to0,1$ as $z\to\mp\infty$, both properties being valid in the present case. This means that the limit~\eqref{moving-plane} holds true, locally uniformly with respect to $(t,x)\in\R\times\R^N$, with $\nu$ replaced by $-\nu$, because now $y=\bar x+r\nu$ whereas in Lemma~\ref{lem:interior} we had $y=\bar x-r\nu$. Namely, there holds
$$\rho_{\e^\mu}(t+t_\mu) - |x+\zeta_\mu-(t+t_\mu)y|=x\.\nu-\bar x\.\nu t-|\zeta_\mu-t_\mu y|+\rho_{\e^\mu}(t_\mu)+o(1)\ \hbox{ as }\mu\to0^+,$$
locally uniformly with respect to $(t,x)\in\R\times\R^N$. Therefore, considering a sequence $\seq{\mu}$ converging to $0^+$ along which $|\zeta_{\mu_n}-t_{\mu_n} y|-\rho_{\e^{\mu_n}}(t_{\mu_n})$ (which is bounded) converges towards some limit $Z\in\R$, and recalling that $\bar x\.\nu=c$, we eventually get
$$v_{\mu_n}(t,x)\to U(x, x\.\nu-ct-Z)\as n\to+\infty,$$
locally uniformly with respect to $(t,x)\in\R\times\R^N$. Finally, by parabolic estimates, the functions $u_{\mu_n}$ converge as $n\to+\infty$ (up to a subsequence), locally uniformly, to an entire solution $\t u$ of~\eqref{general}, and by \eqref{u<v}-\eqref{u=vSUP} it holds that
$$\min_{t\in[-1,0],\ x\in\R^N}\big(U(x, x\.\nu-ct-Z)-\t u(t,x)\big)=U(\t x, \t x\.\nu-Z)-\t u(0,\t x)=0.$$
where $\t x$ is the limit (of a convergent subsequence of) $(x_{\mu_n})_{n\in\N}$ in $[0,1]^N$. Since by Proposition~\ref{pro:U}, $U(x, x\.\nu-ct-Z)$ is a supersolution to~\eqref{general}, because $c\geq c^*(\nu)$ and $\partial_zU<0$ (and it is a strict supersolution if $c>c^*(\nu)$), the strong maximum principle yields
$$\t u(t,x)\equiv U(x, x\.\nu-ct-Z)\ \hbox{ for all $(t,x)\in[-1,+\infty)\times\R^N$},$$
and moreover $c=c^*(\nu)$. This means that $\phi_\nu(Z/c^*(\nu)+\theta,\.)\in\O_{\hat{\bar x}}(u)$ for all $\theta\ge-1$. We then conclude as in the proof of Lemma~\ref{lem:interior} that $\phi_\nu(s,\cdot+\xi)\in\O_{\hat{\bar x}}(u)$ for all $(s,\xi)\in\R\times\R^N$. The proof of Lemma~\ref{lem:exterior} is thereby complete.\hfill$\Box$


\subsection{Proof of Theorem~\ref{th1}, and Corollaries~\ref{cor1} and~\ref{cor2}}\label{sec34}

We first prove Corollary~\ref{cor1}, and next Corollary~\ref{cor2}, which contains Theorem \ref{th1}.

\begin{proof}[Proof of Corollary~$\ref{cor1}$]
Consider a point ${z}\in\partial\W$ such that ${\mc{V}({z})\neq\emptyset}$. Take $\nu\in \mc{V}({z})$. By definition, there exists a sequence of points $\seq{z}$ in $\partial\W$ at which $\W$ fulfills the interior and exterior ball conditions, such that $\seq{z}$ converges to $z$ and the exterior unit normals $\nu_n$ at $z_n$ converge to $\nu$. By Theorem~\ref{th2}, the following properties hold for each $n\in\N$:
$$z_n\.\nu_n=c^*(\nu_n)\quad\text{and}\quad\O_{\hat{z_n}}(u)\supset\big\{\phi_{\nu_n}(t,\cdot+y):(t,y)\in\R\times\R^N\big\}\cup\{0,1\}.$$
Recall that Propositions~\ref{pro:c>0invasion} and \ref{pro:Wchar} yield $0\in\W$ (which is open) hence the sequence $(\hat{z_n})_{n\in\N}$ is well defined and converges to $\hat z$. Thanks to the continuity of the map $e\mapsto c^*(e)$, granted by Proposition~\ref{pro:U}, the first property above yields $z\.\nu=c^*(\nu)$. By the second one, for any given $(t,y)\in\R\times\R^N$, we can pick two sequences $(t_n)_{n\in\N}$ in $\R^+$ and $(x_n)_{n\in\N}$ in $\R^N\setminus\{0\}$, such that $\seq{t}$ diverges to $+\infty$ and there holds
$$\forall\,n\in\N,\qquad|\hat{x_n}-\hat{z_n}|<\frac1n\quad\text{and}\quad\sup_{x\in B_n}|u(t_n,x_n+x)-\phi_{\nu_n}(t,x+y)|<\frac1n.$$
Here the profiles $\phi_{\nu_n}$ are normalized by, say, $\phi_{\nu_n}(0,0)=1/2$. It then follows from the continuity of the fronts $\phi_e$ with respect to $e$, provided again by Proposition~\ref{pro:U}, that $u(t_n,x_n+\.)\to\phi_{\nu}(t,\.+y)$ in $L^\infty_{loc}(\R^N)$ as $n\to+\infty$. This means that $\phi_\nu(t,\cdot+y)\in \O_{\hat{z}}(u)$. Finally, the inclusion $\{0,1\}\subset \O_{\hat{z}}(u)$ directly follows from Lemma~\ref{lem:01}.
\end{proof}

\begin{proof}[Proof of Corollary~$\ref{cor2}$]
We start with the inclusion~\eqref{minimizer} for the set $\O_e(u)$, for given $e\in\Sph$. Let $\xi\in\Sph$ 
be a minimizer in the expression of $w(e)$ in \eqref{formuleFG}, i.e., $\xi\.e>0$~and
$$w(e)=\frac{c^*(\xi)}{\xi\cdot e}.$$
Calling $\bar x:=w(e)e\in\partial\W_0$, this equality rewrites as $\bar x\.\xi=c^*(\xi)$. We also have by \eqref{formuleFG} 
$$\W_0\subset\{x\in\R^N:x\.\xi<c^*(\xi)\}.$$
It follows that, for given $r>0$, $B_r(\bar x+r\xi)\cap\W_0=\emptyset$, that is, $\W_0$ satisfies the exterior ball condition at ${\bar x}$. We then apply Lemma~\ref{lem:exterior} and derive
$$\displaystyle\O_{e}(u)=\displaystyle\O_{\hat{\bar x}}(u)\supset\big\{\phi_\xi(t,\cdot+y):(t,y)\in\R\times\R^N\big\}.$$	
Moreover the inclusion $\{0,1\}\subset \O_{\hat{\bar x}}(u)$ follows from Lemma~\ref{lem:01}. 

We now turn to the inclusion~\eqref{differentiable} for the set $\O(u)$ in the case
where $\partial\W_0$ is differentiable. We already know that $\{0,1\}\subset \O(u)$. Take an arbitrary $e\in\Sph$. 
We know by \eqref{formuleFG} that $\W_0$ is convex, bounded and non-empty, hence $\ol{\W_0}$ is convex, compact and non-empty. There exists then $L\in\R$ such~that 
$$\W_0\subset\{x\in\R^N:x\.e<L\}\quad\text{and}\quad\ol{\W_0}\cap\{x\in\R^N:x\.e=L\}\neq\emptyset.$$
Since $\partial\W_0$ is differentiable, then necessarily $\{x\in\R^N\ :\ x\.e=L\}$ is the tangent plane to $\W_0$ at any point~$z\in\ol{\W_0}\cap\{x\in\R^N\ :\ x\.e=L\}$, with outward unit normal $e$ (notice that such a $z$ necessarily belongs to $\partial\W_0$). As observed in Footnote~\ref{f2} following the Freidlin-G\"artner formula \eqref{formuleFG}, one then has that the expression for $w(\hat{z})$ is uniquely minimized by $\xi=e$. One eventually concludes from the first statement of the corollary,~that
$$\big\{\phi_e(t,\cdot+y):(t,y)\in\R\times\R^N\big\}\subset \displaystyle\O_{\hat{z}}(u)\subset\O(u).$$
The proof of Corollary~\ref{cor2} is thereby complete.
\end{proof}


\section{Proof of the other results}\label{sec4}

In Section~\ref{sec41}, we state and prove some auxiliary lemmata which will be used in the subsequent sections. Sections~\ref{sec42}-\ref{sec46} are devoted to the proofs of Propositions~\ref{pro:U},~\ref{pro:c>0invasion},~\ref{proinvasion},~\ref{pro:Wchar} and~\ref{pro:level} on the properties of pulsating traveling fronts, the invasion property and the asymptotic invasion shapes.


\subsection{Some auxiliary lemmata}\label{sec41}

Propositions~\ref{pro:U} and~\ref{pro:c>0invasion}, that are proved in Sections~\ref{sec42} and~\ref{sec43}, are based on the following auxiliary lemmata.

\begin{lemma}\label{lem>0}
Assume that Hypotheses~$\ref{hyp:c*>0}$ and~$\ref{hypbis}$ hold. Then $\max_{x\in\R^N}\!f(x,s)>0$ for all $s\in[1-\delta,1)$, where $\delta>0$ is given in Hypothesis~$\ref{hypbis}$.
\end{lemma}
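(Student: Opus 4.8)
The plan is to argue by contradiction. Suppose there is $s_0\in[1-\delta,1)$ with $\max_{x\in\R^N}f(x,s_0)\le0$. Since $f(x,\cdot)$ is non-increasing on $[1-\delta,1]$ by Hypothesis~\ref{hypbis} and $f(x,1)=0$, this already forces $f(x,s)\le0$ for every $x\in\R^N$ and every $s\in[s_0,1]$. I would then fix an arbitrary $e\in\Sph$ together with the pulsating front $\phi_e(t,x)=U_e(x,x\cdot e-c^*(e)t)$, $c^*(e)>0$, given by Hypothesis~\ref{hyp:c*>0}, and aim to trap $\phi_e$ from above, in the range $[s_0,1]$, by an explicit supersolution built from a traveling wave of the \emph{linear} part of~\eqref{general} moving with an arbitrarily slow speed; this will contradict both $c^*(e)>0$ and the fact that $U_e(\cdot,z)\to1$ as $z\to-\infty$.

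For the construction, I would use that, for $\lambda>0$, the periodic principal eigenfunction $\varphi_\lambda>0$ of~\eqref{evp} (with the chosen $e$), normalized by $\max_{\R^N}\varphi_\lambda=1$, is such that $w_\lambda(t,x):=e^{k_e(\lambda)t-\lambda x\cdot e}\varphi_\lambda(x)$ solves the linear equation $\partial_t w=\dv(A\nabla w)+q\cdot\nabla w$ — indeed~\eqref{evp} is exactly the eigenvalue problem characterizing such solutions. Thus $w_\lambda$ is a traveling wave of the linear part with speed $c_\lambda:=k_e(\lambda)/\lambda$, and by Lemma~\ref{lemkelambda} one has $c_\lambda\to0$ as $\lambda\to0^+$, so I can fix $\lambda>0$ with $c_\lambda<c^*(e)/2$. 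Extending $f$ by $f(x,s):=0$ for $s\ge1$ (continuous, and irrelevant for $[0,1]$-valued solutions), both $1$ and $s_0+Cw_\lambda$ are supersolutions of~\eqref{general} for every $C>0$ — because $f(x,s)\le0$ for $s\ge s_0$ while $s_0+Cw_\lambda$ solves the linear part — and hence so is $\bar u:=\min\{1,\,s_0+Cw_\lambda\}$. I would then choose $C$ large enough that $\phi_e(0,\cdot)\le\bar u(0,\cdot)$ on $\R^N$: using $U_e(x,z)\to0$ uniformly as $z\to+\infty$ (and $s_0>0$), pick $M_1$ with $U_e(x,z)<s_0$ for all $x$ and all $z\ge M_1$; then $\phi_e(0,x)=U_e(x,x\cdot e)<s_0\le\bar u(0,x)$ when $x\cdot e>M_1$, while on $\{x\cdot e\le M_1\}$ one has $\phi_e(0,x)<1$ and, for $C\ge(1-s_0)e^{\lambda M_1}/\min_{\R^N}\varphi_\lambda$, also $\bar u(0,x)=1$. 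The parabolic comparison principle for bounded sub/supersolutions then yields $\phi_e\le\bar u$ on $[0,+\infty)\times\R^N$.

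Finally, I would evaluate this along $x_t:=\tfrac{c^*(e)}{2}\,t\,e$. Since $c_\lambda<c^*(e)/2$, we get $x_t\cdot e-c_\lambda t=\big(\tfrac{c^*(e)}{2}-c_\lambda\big)t\to+\infty$, hence $\bar u(t,x_t)\le s_0+Ce^{-\lambda(x_t\cdot e-c_\lambda t)}\to s_0$; on the other hand $\phi_e(t,x_t)=U_e\big(x_t,-\tfrac{c^*(e)}{2}t\big)\to1$ as $t\to+\infty$ by~\eqref{ptf2}, since $-\tfrac{c^*(e)}{2}t\to-\infty$. Letting $t\to+\infty$ in $\phi_e(t,x_t)\le\bar u(t,x_t)$ gives $1\le s_0<1$, the desired contradiction; hence $\max_{x\in\R^N}f(x,s)>0$ for every $s\in[1-\delta,1)$. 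I expect the only genuinely delicate point to be the idea of the construction itself — producing a linear super-wave whose speed, thanks to Lemma~\ref{lemkelambda}, can be pushed below $c^*(e)$, and capping it at $1$ so as to stay in the range where $f\le0$ — the remaining verifications (that $\min\{1,s_0+Cw_\lambda\}$ is a supersolution, and that comparison applies on $\R^N$ to bounded functions) being routine.
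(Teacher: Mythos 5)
Your proof is correct and follows essentially the same strategy as the paper's: argue by contradiction, use Hypothesis~\ref{hypbis} to get $f\le0$ (in fact $f\equiv0$) on $\R^N\times[s_0,1]$, invoke Lemma~\ref{lemkelambda} to produce an exponential supersolution $e^{-\lambda(x\cdot e-c_\lambda t)}\varphi_\lambda(x)$ of the advection--diffusion part travelling at a speed strictly below $c^*(e)$, and contradict $U_e(\cdot,z)\to1$ as $z\to-\infty$ by evaluating along $x_t=\tfrac{c^*(e)}{2}te$. The only difference is implementational: the paper confines the comparison to a moving strip where $\phi_e\ge1-\sigma$ solves the linear equation exactly, whereas you cap your supersolution at $1$ and compare globally on $\{t\ge0\}$ — both routes are valid.
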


\begin{proof}
Assume by way of contradiction that the conclusion does not hold. By Hypo\-thesis~\ref{hypbis} and $f(\cdot,1)=0$ in $\R^N$, one then infers the existence of $\sigma\in(0,\delta]$ such that
\be\label{defsigma}
f=0\hbox{ in $[1-\sigma,1]\times\R^N$}.
\ee
Consider then any direction $e\in\Sph$, and a front $\phi_e(t,x)=U_e(x,x\cdot e-c^*(e)t)$ given by~\eqref{ptf1}-\eqref{ptf2}, from Hypothesis~\ref{hyp:c*>0}. Since $U_e(\cdot,z)\to 1$ uniformly in $\R^N$ as $z\to-\infty$, there exists $M<0$ such that
\be\label{defM}
\phi_e(t,x)\ge1-\sigma\hbox{ for all $(t,x)\in\R\times\R^N$ such that $x\cdot e-c^*(e)t\le M$}.
\ee
On the other hand, by~\eqref{Ue}, there is $\varsigma\in(0,\sigma]$ such that $U_e(\cdot,M)\le1-\varsigma$ in $\R^N$, that~is,
\be\label{phiesigma}
\phi_e(t,x)\le1-\varsigma\hbox{ for all $(t,x)\in\R\times\R^N$ such that $x\cdot e-c^*(e)t=M$}.
\ee

Under the notations~\eqref{evp}, it then follows from Lemma~\ref{lemkelambda}, together with the continuity of $\lambda\mapsto k_e(\lambda)$ and the positivity of $c^*(e)$ by Hypothesis~\ref{hyp:c*>0}, that there exists $\lambda>0$ such that
\be\label{choicelambda}
k_e(\lambda)\le\frac{c^*(e)\,\lambda}{3}.
\ee
Let $\varphi$ be the positive periodic $C^{2,\alpha}(\R^N)$ eigenfunction solving~\eqref{evp}, normalized such that $\max_{\R^N}\varphi=1$. Consider now the function $\overline{u}$ defined in $\R\times\R^N$ by
$$\overline{u}(t,x):=1-\varsigma+\varsigma\,\frac{\varphi(x)}{\min_{\R^N}\varphi}\,e^{-\lambda(x\cdot e-c^*(e)t/3)},$$
and the set
$$E:=\Big\{(t,x)\in\R\times\R^N:\frac{c^*(e)t}{3}\le x\cdot e\le c^*(e)t+M\Big\}.$$
Notice that $t\ge-3M/(2c^*(e))$ for every $(t,x)\in E$. For every $(t,x)\in\partial E$, either $x\cdot e=c^*(e)t+M$ and then $\overline{u}(t,x)\ge1-\varsigma\ge\phi_e(t,x)$ by~\eqref{phiesigma} and the definition of~$\overline{u}$, or $x\cdot e=c^*(e)t/3$ and then $\overline{u}(t,x)\ge1>\phi_e(t,x)$. Furthermore, $\phi_e(t,x)\ge1-\sigma$ for all $(t,x)\in E$ by~\eqref{defM}, whence
$$\partial_t\phi_e=\dv(A(x)\nabla\phi_e)+q(x)\.\nabla\phi_e\ \hbox{ in $E$}$$
by~\eqref{general} and~\eqref{defsigma}. On the other hand, 
$$\partial_t\overline{u}\ge\dv(A(x)\nabla\overline{u})+q(x)\.\nabla\overline{u}\ \hbox{ in $E$}$$
by~\eqref{evp} and~\eqref{choicelambda}. The parabolic maximum principle then yields $\overline{u}\ge\phi_e$ in~$E$. In~particular, for $t\ge-3M/c^*(e)$ and $x_t:=(2c^*(e)t/3)e$, there holds $(t,x_t)\in E$ and
$$U_e\Big(x_t,-\frac{c^*(e)t}{3}\Big)=\phi_e(t,x_t)\le\overline{u}(t,x_t)=1-\varsigma+\varsigma\,\frac{\varphi(x_t)}{\min_{\R^N}\varphi}\,e^{-\lambda c^*(e)t/3}.$$
By~\eqref{ptf2}, the passage to the limit as $t\to+\infty$ leads to $1\le1-\varsigma$, a contradiction. The proof of Lemma~\ref{lem>0} is thereby complete.
\end{proof}

The following Liouville-type result is a consequence of Lemma~\ref{lem>0}.

\begin{lemma}\label{lem>0bis}
Assume that Hypotheses~$\ref{hyp:c*>0}$ and~$\ref{hypbis}$ hold. Then, for any time-global solution $u:\R\times\R^N\to[0,1]$ of~\eqref{general} with $1-\delta\le u\le1$ in $\R\times\R^N$, one has $u\equiv 1$ in $\R\times\R^N$.
\end{lemma}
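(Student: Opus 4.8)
The plan is to prove that $\eta:=\inf_{\R\times\R^N}u$ equals $1$; since $u\le 1$ this is equivalent to $u\equiv 1$. I begin with two elementary remarks on $f$ near $1$. Because $s\mapsto f(x,s)$ is non-increasing on $[1-\delta,1]$ and $f(x,1)=0$, we have $f(x,s)\ge 0$ and (since $\partial_sf$ is continuous) $\partial_sf(x,s)\le 0$ for every $(x,s)\in\R^N\times[1-\delta,1]$. Moreover, $f$ being Lipschitz in $s$ uniformly in $x$, writing $C:=\|\partial_sf\|_{L^\infty(\R^N\times[0,1])}$, for $1-\delta\le\eta\le s\le 1$ one has $f(x,s)\ge f(x,\eta)-C(s-\eta)\ge -C(s-\eta)$.

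First step: produce a limiting entire solution that attains the infimum. Since $\eta\in[1-\delta,1]$, choose $(t_n,x_n)\in\R\times\R^N$ with $u(t_n,x_n)\to\eta$, and write $x_n=y_n+h_n$ with $h_n\in\Z^N$, $y_n\in[0,1)^N$. By the $\Z^N$-periodicity of the coefficients of~\eqref{general}, each $u_n(t,x):=u(t+t_n,x+h_n)$ is again an entire solution of~\eqref{general} valued in $[1-\delta,1]$, with $\inf_{\R\times\R^N}u_n=\eta$ and $u_n(0,y_n)\to\eta$. By the interior parabolic estimates recalled in the introduction, up to extraction $y_n\to\tilde x\in[0,1]^N$ and $u_n\to\tilde u$ locally uniformly together with the relevant derivatives, where $\tilde u$ is an entire solution of~\eqref{general} with $\eta\le\tilde u\le 1$ and, crucially, $\tilde u(0,\tilde x)=\eta$.

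Second step: apply the strong maximum principle and conclude. Set $w:=\tilde u-\eta\ge 0$, so that $w(0,\tilde x)=0$. The bound on $f$ above gives $f(x,\tilde u)\ge -Cw$, hence from~\eqref{general} one gets $\partial_t w\ge\dv(A\nabla w)+q\cdot\nabla w-Cw$ on $\R\times\R^N$; that is, $w$ is a nonnegative supersolution of the uniformly parabolic operator $\partial_t-\dv(A\nabla\,\cdot\,)-q\cdot\nabla+C$ that vanishes at the point $(0,\tilde x)$, which is interior in time since $\tilde u$ is defined for all $t\in\R$. The strong parabolic maximum principle then forces $w\equiv 0$ on $\R^N\times(-\infty,0]$, i.e.\ $\tilde u\equiv\eta$ for $t\le 0$. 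Substituting this constant into~\eqref{general} yields $f(x,\eta)=0$ for every $x\in\R^N$. Since $\eta\in[1-\delta,1]$, Lemma~\ref{lem>0} (which asserts $\max_{\R^N}f(\cdot,s)>0$ for all $s\in[1-\delta,1)$) rules out $\eta<1$, so $\eta=1$ and $u\equiv 1$.

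The only genuinely delicate point is the first step: the infimum of $u$ is taken over the non-compact set $\R\times\R^N$ and need not be attained, so the strong maximum principle cannot be applied to $u$ itself; it is precisely the periodicity of~\eqref{general} together with the compactness furnished by parabolic estimates that allows one to translate back into $[0,1]^N$ and pass to a limiting entire solution that does attain its infimum. A secondary technical remark is that Hypothesis~\ref{hypbis} only provides monotonicity of $f(x,\cdot)$ in a left-neighbourhood of $1$, not strict linear stability ($\partial_sf(x,1)<0$) there; this is harmless, because the maximum principle is applied to the purely linear differential inequality obtained by absorbing the term $f(x,\tilde u)+Cw\ge 0$ into the zero-th order coefficient, which uses only the Lipschitz bound on $f$ together with the sign $f(\cdot,s)\ge 0$ on $[1-\delta,1]$ inherited from $f(\cdot,1)\equiv 0$.
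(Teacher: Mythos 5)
Your proof is correct, but it takes a different route from the paper's. You work directly with the entire solution: you take a minimizing sequence for $\eta:=\inf u$, use the $\Z^N$-periodicity of the coefficients and interior parabolic estimates to translate back to $[0,1]^N$ and extract a limiting entire solution $\tilde u$ that \emph{attains} the value $\eta$ at an interior point, and then apply the strong parabolic maximum principle backward in time (after absorbing $f$ into a zero-order term via the Lipschitz bound and the sign $f(\cdot,s)\ge0$ on $[1-\delta,1]$) to force $\tilde u\equiv\eta$ for $t\le0$; Lemma~\ref{lem>0} then excludes $\eta<1$. The paper instead first reduces, by comparison with the time-translates $v(-n,\cdot)\ge1-\delta$, to showing that the Cauchy solution with initial datum $u_0\equiv1-\delta$ converges to $1$; that solution is nondecreasing in $t$ (again because $f\ge0$ on $[1-\delta,1]$) and periodic in $x$, so it converges to a periodic steady state, which the \emph{elliptic} strong maximum principle at a minimum point shows to be constant, and Lemma~\ref{lem>0} finishes as in your argument. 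The two proofs use the same three ingredients (periodicity plus compactness, a strong maximum principle, Lemma~\ref{lem>0}); yours buys a slightly more self-contained argument that never leaves the class of entire solutions, while the paper's monotone-iteration route is closer in spirit to how the lemma is later invoked (as a Liouville property replacing the positivity assumption of~\cite{DR}) and sidesteps the extraction of a subsequence attaining the infimum. One small presentational point: the claim $\partial_s f\le0$ on $[1-\delta,1]$ in your first paragraph is true but unused; only the uniform Lipschitz constant and the inequality $f(\cdot,s)\ge f(\cdot,1)=0$ enter the argument.
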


\begin{proof}
From the maximum principle, it suffices that the solution $u$ of~\eqref{general} with initial condition $u_0:=1-\delta$ satisfies $u(t,x)\to1$ as $t\to+\infty$ for all $x\in\R^N$ (indeed, for a solution $v$ satisfying $1-\delta\le v\le 1$ in $\R\times\R^N$, since $1\ge v(-n,\cdot)\ge1-\delta=u_0$ in $\R^N$ for all $n\in\N$, we would then have that $1\ge v(t,x)\ge u(t+n,x)$ for all $(t,x)\in\R^N$ and $n\ge-t$, whence $v(t,x)=1$ for all $t\in\R$ for all $x\in\R^N$ by passing to the limit $n\to+\infty$). So, consider the solution $u$ of~\eqref{general} with
$$u_0:=1-\delta$$
in $\R^N$. Since $u(t,x)\le1$ for all $(t,x)\in\R_+\times\R^N$ and $f(x,s)\geq0$ for all $(x,s)\in\R^N\times[1-\delta,1]$ by Hypothesis~\ref{hypbis} and $f(\cdot,1)\equiv0$ in $\R^N$, using the comparison principle one sees that $u(t,x)$ is non-decreasing in $t$, hence by parabolic estimates $u(t,\cdot)$ converges in $C^2_{loc}(\R^N)$ to a stationary solution $\t u$ of \eqref{general} satisfying $1-\delta\leq \t u\leq 1$ in $\R^N$. Moreover, $u$ is periodic in $x$, hence $\t u$ is periodic too, while $f(x,\t u(x))\ge0$ for all $x\in\R^N$ since $1-\delta\le\t u\le1$ in $\R^N$. Considering a minimum point of $\t u$ and applying this time the elliptic strong maximum principle, one infers that $\t u$ is constant, whence $f(x,\t u)=0$ for all $x\in\R^N$. Lemma~\ref{lem>0} then yields $\t u=1$, that is, $u(t,x)\to1$ as $t\to+\infty$ for all $x\in\R^N$, and the proof of Lemma~\ref{lem>0bis} is complete.
\end{proof}

The next two lemmata, which are particular cases of comparison principles established in~\cite{BH1} in more general geometries, provide some comparisons between sub- and super-solutions defined in half-spaces in the $(t,x)$ variables.

\begin{lemma}\label{lem1}{\rm{\cite[Lemma~3.2]{BH1}}}
Assume that Hypothesis~$\ref{hypbis}$ holds. Let $e\in\Sph$, $c\in\R\setminus\{0\}$, $h\in\R$, and denote $E:=\{(t,x)\in\R\times\R^N:x\cdot e-ct\ge h\}$. Let $\underline{u}:E\to[0,1]$ and $\overline{u}:E\to[0,1]$ be two $C^1$ functions which are $C^2$ in $x$ and are such that $\partial_t\underline{u}$, $\partial_t\overline{u}$, $\partial_{x_i}\underline{u}$, $\partial_{x_i}\overline{u}$, $\partial_{x_ix_j}\underline{u}$ and $\partial_{x_ix_j}\overline{u}$ are all bounded and H\"older-continuous in $E$, for every $1\le i,j\le N$. Assume that
$$\left\{\baa{l}
\partial_t\underline{u}\le\dv(A(x)\nabla\underline{u})+q(x)\.\nabla\underline{u} + f(x,\underline{u})\ \hbox{ in }E,\vspace{3pt}\\
\partial_t\overline{u}\ge\dv(A(x)\nabla\overline{u})+q(x)\.\nabla\overline{u} + f(x,\overline{u})\ \hbox{ in }E,\vspace{3pt}\\
\underline{u}\le\delta\ \hbox{ in }E,\vspace{3pt}\\
\displaystyle\mathop{\lim}_{H\to+\infty}\Big(\sup_{(t,x)\in E,\,x\cdot e-ct\ge H}\big(\underline{u}(t,x)-\overline{u}(t,x)\big)\Big)\le 0, \vspace{3pt}\\
\underline{u}\le\overline{u}\ \hbox{ on }\partial E.\eaa\right.$$
Then $\underline{u}\le\overline{u}$ in $E$.
\end{lemma}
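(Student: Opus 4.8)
The plan is to argue by contradiction, turning the unbounded domain $E$ into a limiting half-space by a translation-and-periodicity procedure on which the parabolic strong maximum principle applies. Suppose $m:=\sup_E(\underline u-\overline u)>0$. The decisive first observation is that the hypothesis $\underline u\le\delta$ makes the monotonicity of $f$ work in our favour on the ``bad set'' $\{\underline u>\overline u\}$: there one has $0\le\overline u<\underline u\le\delta$, so both $\underline u$ and $\overline u$ take values in $[0,\delta]$, where $s\mapsto f(x,s)$ is non-increasing by Hypothesis~\ref{hypbis}. Hence, setting $w:=\underline u-\overline u$, at every point where $w>0$ we have $f(x,\underline u)-f(x,\overline u)\le 0$ and therefore
\[
\partial_t w-\dv(A(x)\nabla w)-q(x)\cdot\nabla w\le f(x,\underline u)-f(x,\overline u)\le 0,
\]
i.e. $w$ is there a subsolution of the homogeneous linear parabolic operator, up to a bounded non-negative zeroth order term obtained by writing $f(x,\underline u)-f(x,\overline u)=-c_0\,w$ with $0\le c_0\le\|\partial_s f\|_{L^\infty}$.

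Next I would localise the near-maximal points. Since $\underline u,\overline u$ have bounded $x$-gradients, $w$ is uniformly Lipschitz in $x$; together with $w\le 0$ on $\partial E=\{x\cdot e-ct=h\}$ this gives $\eta_0>0$ with $w\le m/2$ on the strip $\{h\le x\cdot e-ct\le h+\eta_0\}$, while the hypothesis on $\limsup_{H\to+\infty}$ gives $H'>h$ with $w\le m/2$ on $\{x\cdot e-ct\ge H'\}$. Consequently, any maximising sequence $(t_n,x_n)$ in $E$ (so $w(t_n,x_n)\to m$) satisfies $h+\eta_0<x_n\cdot e-ct_n<H'$ for $n$ large. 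I then translate: pick $k_n\in\Z^N$ with $x_n-k_n\in[0,1)^N$ and shift by $(t_n,k_n)$. By periodicity of $A$, $q$, $f$, the shifted functions $\underline u_n:=\underline u(\cdot+t_n,\cdot+k_n)$ and $\overline u_n:=\overline u(\cdot+t_n,\cdot+k_n)$ satisfy the same sub/super\-solution inequalities on the shifted half-spaces $E_n=\{x\cdot e-ct\ge h_n\}$, where $h_n=h-(x_n\cdot e-ct_n)+(x_n-k_n)\cdot e$ stays bounded. Using the uniform bounds and H\"older bounds on the derivatives, the Arzel\`a--Ascoli theorem yields, along a subsequence, limits $\underline u_\infty,\overline u_\infty$ on a limiting half-space $\inter E_\infty=\{x\cdot e-ct>h_\infty\}$, still solving the sub/super\-solution inequalities, with $\underline u_\infty\le\delta$, with $w_\infty:=\underline u_\infty-\overline u_\infty\le m$ on $E_\infty$ and $w_\infty\le m/2$ on $\{h_\infty<x\cdot e-ct<h_\infty+\eta_0\}$, and with $w_\infty$ attaining the value $m$ at a point $P_0$ lying at moving-frame distance at least $\eta_0$ from $\partial E_\infty$, hence strictly interior.

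Finally, near $P_0$ we have $w_\infty>0$, so $w_\infty-m$ is a non-positive subsolution of a linear parabolic operator with bounded non-negative zeroth order coefficient that attains the value $0$ at the interior point $P_0$. The parabolic strong maximum principle then propagates this maximum backward in time and across space along time-monotone chains, so that $w_\infty\equiv m$ along a chain reaching points of the strip $\{h_\infty<x\cdot e-ct<h_\infty+\eta_0\}$ — contradicting $w_\infty\le m/2$ there. Therefore $m\le 0$, i.e. $\underline u\le\overline u$ in $E$.

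The main obstacle is precisely the non-compactness of $E$ in the directions along which $x\cdot e-ct$ stays bounded while $t\to\pm\infty$: the boundary condition on $\partial E$ and the condition as $x\cdot e-ct\to+\infty$ control the other two ``ends'' of $E$, but it is the translation-and-periodicity reduction that makes the strong maximum principle applicable, and this is where the uniform bounds (and H\"older bounds) on the derivatives of $\underline u$ and $\overline u$ are essential. A secondary technical point requiring care is the bookkeeping of the shifted half-spaces $E_n$, together with the verification that the touching point remains strictly interior and that the strip estimate $w\le m/2$ persists in the limit.
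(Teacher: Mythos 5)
The paper does not reprove this lemma: it is stated as a direct citation of \cite[Lemma~3.2]{BH1}, with no argument reproduced in the text, so there is no in-paper proof to set yours against. That said, your proof is correct and is of exactly the expected type for such half-space comparison principles under Hypothesis~\ref{hypbis}: a contradiction argument combining (i) the observation that $\underline u\le\delta$ forces $w:=\underline u-\overline u$ to satisfy a linear parabolic inequality with a nonnegative zeroth order coefficient on $\{w>0\}$, since there both $\underline u$ and $\overline u$ lie in $[0,\delta]$ where $f(x,\cdot)$ is non-increasing; (ii) a uniform-in-$t$ Lipschitz bound in $x$ plus the condition at $x\cdot e-ct\to+\infty$, confining near-maximal points to a bounded range of $x\cdot e - ct$; (iii) translations by $\Z^N$ in space and by $t_n$ in time, using periodicity, followed by Arzel\`a--Ascoli; and (iv) the strong maximum principle on the limiting configuration. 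The bookkeeping of $h_n$ (which simplifies to $h-k_n\cdot e+ct_n$), the boundedness of the $h_n$, and the fact that $P_0$ ends up at moving-frame distance at least $\eta_0$ from $\partial E_\infty$ are all handled correctly.

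The one step that deserves to be spelled out is the final propagation. The inequality $\partial_t w_\infty-\dv(A\nabla w_\infty)-q\cdot\nabla w_\infty+c_0 w_\infty\le 0$ is available only on the open set $\{w_\infty>0\}$, which a priori could be badly shaped, so one cannot invoke the strong maximum principle on the whole half-space in one stroke. The standard fix is a continuation argument along a $t$-monotone $C^1$ path $\gamma:[T_1,0]\to\inter E_\infty$ with $\gamma(0)=P_0$ and $\gamma(T_1)$ in the strip (such a path exists because $x_\infty\cdot e-h_\infty\geq\eta_0$ and the time slices of $\inter E_\infty$ are convex): set $\tau^*:=\inf\{\tau\in[T_1,0]:w_\infty(\gamma(s))=m\ \forall\, s\in[\tau,0]\}$. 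If $\tau^*>T_1$, then $w_\infty(\gamma(\tau^*))=m>0$ by continuity, a small backward parabolic cylinder around $\gamma(\tau^*)$ lies inside $\{w_\infty>0\}\cap\inter E_\infty$, and the strong maximum principle in that cylinder gives $w_\infty\equiv m$ at slightly smaller times along $\gamma$, contradicting the minimality of $\tau^*$. Hence $\tau^*=T_1$ and $w_\infty(\gamma(T_1))=m$, contradicting $w_\infty\le m/2$ on the strip. Your phrase ``time-monotone chains'' gestures at exactly this, but it is worth making explicit that the propagation itself maintains $w_\infty=m>0$, so the chain automatically stays in the region where the differential inequality holds.
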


\begin{lemma}\label{lem2}{\rm{\cite[Lemma~3.4]{BH1}}}
Assume that Hypothesis~$\ref{hypbis}$ holds. Let $e\in\Sph$, $c\in\R\setminus\{0\}$, $h\in\R$, and denote $F:=\{(t,x)\in\R\times\R^N:x\cdot e-ct\le h\}$. Let $\underline{u}:F\to[0,1]$ and $\overline{u}:F\to[0,1]$ be two $C^1$ functions which are $C^2$ in $x$ and are such that $\partial_t\underline{u}$, $\partial_t\overline{u}$, $\partial_{x_i}\underline{u}$, $\partial_{x_i}\overline{u}$, $\partial_{x_ix_j}\underline{u}$ and $\partial_{x_ix_j}\overline{u}$ are all bounded and H\"older-continuous in $F$, for every $1\le i,j\le N$. Assume that
$$\left\{\baa{l}
\partial_t\underline{u}\le\dv(A(x)\nabla\underline{u})+q(x)\.\nabla\underline{u} + f(x,\underline{u})\ \hbox{ in }F,\vspace{3pt}\\
\partial_t\overline{u}\ge\dv(A(x)\nabla\overline{u})+q(x)\.\nabla\overline{u} + f(x,\overline{u})\ \hbox{ in }F,\vspace{3pt}\\
\overline{u}\ge1-\delta\ \hbox{ in }F,\vspace{3pt}\\
\displaystyle\mathop{\lim}_{H\to-\infty}\Big(\sup_{(t,x)\in F,\,x\cdot e-ct\le H}\big(\underline{u}(t,x)-\overline{u}(t,x)\big)\Big)\le 0, \vspace{3pt}\\
\underline{u}\le\overline{u}\ \hbox{ on }\partial F.\eaa\right.$$
Then $\underline{u}\le\overline{u}$ in $F$.
\end{lemma}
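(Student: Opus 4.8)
The plan is to argue by contradiction, using the weak stability near $s=1$ encoded in Hypothesis~\ref{hypbis} together with the $\Z^N$-periodicity of $A$, $q$, $f$ to reduce the unbounded half-space situation to an interior one. So, suppose $\underline{u}\le\overline{u}$ fails in $F$ and set $M:=\sup_F(\underline{u}-\overline{u})>0$. Since $0\le\underline{u}\le 1$ and $1-\delta\le\overline{u}\le1$ in $F$, we get $M\le\delta$, and along any maximizing sequence $(t_n,x_n)\in F$ with $(\underline{u}-\overline{u})(t_n,x_n)\to M$, for $n$ large both values $\underline{u}(t_n,x_n)$ and $\overline{u}(t_n,x_n)$ lie in $[1-\delta,1]$ (because $\overline{u}\ge1-\delta$ forces $\underline{u}\ge1-\delta+M-o(1)>1-\delta$, and $\underline{u}\le1$ forces $\overline{u}\le1-M+o(1)$). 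Moreover, since $\underline{u}-\overline{u}\le0$ on $\partial F$ and $\limsup_{x\cdot e-ct\to-\infty}(\underline{u}-\overline{u})\le0<M$, such a sequence must stay at positive distance from $\partial F$.

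The key step is a translation–compactness argument. Write $x_n=y_n+z_n$ with $y_n\in\Z^N$ and $z_n\in[0,1)^N$, and set $\underline{u}_n(t,x):=\underline{u}(t+t_n,x+y_n)$, $\overline{u}_n(t,x):=\overline{u}(t+t_n,x+y_n)$. By periodicity these are still respectively a sub- and a supersolution of~\eqref{general}, now on the half-space $F_n:=\{(t,x):x\cdot e-ct\le h_n\}$ with $h_n:=h-y_n\cdot e+ct_n$; one checks that $(0,z_n)\in F_n$, that $(\underline{u}_n-\overline{u}_n)(0,z_n)\to M$, and that $h_n\ge z_n\cdot e\ge-\sqrt N$. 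Using the uniform bounds and H\"older estimates on the derivatives assumed in the statement (invariant under the translations), up to a subsequence $\underline{u}_n\to\underline{u}_\infty$, $\overline{u}_n\to\overline{u}_\infty$ in $C^{1,2}_{loc}$ of the limiting domain, with $z_n\to z_\infty\in[0,1]^N$ and either $h_n\to+\infty$ (limit domain $\R\times\R^N$) or $h_n\to h_\infty\in[-\sqrt N,+\infty)$ (limit domain $F_\infty:=\{x\cdot e-ct\le h_\infty\}$). In all cases $\underline{u}_\infty-\overline{u}_\infty\le M$ with equality at $(0,z_\infty)$, $\overline{u}_\infty\ge1-\delta$, and $\underline{u}_\infty\le\overline{u}_\infty$ on $\partial F_\infty$ (passing to the limit in the inequality on $\partial F_n$). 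If $h_n\to+\infty$, then for fixed $(t,x)$ the points $(t+t_n,x+y_n)$ satisfy $(x+y_n)\cdot e-c(t+t_n)=x\cdot e-ct+h-h_n\to-\infty$, so the $\limsup$ hypothesis yields $\underline{u}_\infty(t,x)\le\overline{u}_\infty(t,x)$ for all $(t,x)$, contradicting the value $M>0$ at $(0,z_\infty)$.

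It remains to treat $h_n\to h_\infty$. If $z_\infty\cdot e=h_\infty$, then $(0,z_\infty)\in\partial F_\infty$ and the boundary inequality gives $M\le0$, a contradiction; so assume $z_\infty\cdot e<h_\infty$, i.e.\ $(0,z_\infty)$ is interior. Put $w:=\overline{u}_\infty+M-\underline{u}_\infty\ge0$, with $w(0,z_\infty)=0$. On a neighbourhood of $(0,z_\infty)$, where $w$ is small, one has $\underline{u}_\infty\ge\overline{u}_\infty+M-w>1-\delta$ and $\overline{u}_\infty\in[1-\delta,1]$, hence both functions lie in $[1-\delta,1]$ with $\underline{u}_\infty\ge\overline{u}_\infty$; by the monotonicity of $f(x,\cdot)$ on $[1-\delta,1]$ (Hypothesis~\ref{hypbis}), $f(x,\overline{u}_\infty)-f(x,\underline{u}_\infty)\ge0$, so that $\partial_tw-\dv(A\nabla w)-q\cdot\nabla w\ge f(x,\overline{u}_\infty)-f(x,\underline{u}_\infty)\ge0$ there. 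Thus $w$ is a nonnegative supersolution of a linear parabolic operator with no zeroth-order term, vanishing at the interior point $(0,z_\infty)$; the strong maximum principle propagates $w\equiv0$, the argument remaining valid on the whole region where both functions sit in $[1-\delta,1]$, which is an open neighbourhood of $\{w=0\}$. A short connectedness check (the relevant time-slices of $F_\infty$ form either an expanding or a shrinking family of half-spaces, and the segment from any admissible point to $z_\infty$ stays admissible) shows this forces $w\equiv0$ on $F_\infty\cap\{t\le 0\}$; since $c\neq0$, the hyperplane $\partial F_\infty$ meets that region, and there $\underline{u}_\infty=\overline{u}_\infty+M$ contradicts $\underline{u}_\infty\le\overline{u}_\infty$. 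Hence $M\le0$, completing the proof; the proof of Lemma~\ref{lem1} is analogous, using instead $\underline{u}\le\delta$ and the monotonicity of $f(x,\cdot)$ on $[0,\delta]$.

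The main obstacle is the bookkeeping in the limiting procedure: one must ensure the candidate contact point realizing $M$ does not escape to spatial infinity ``the wrong way'' (handled by translating by $y_n\in\Z^N$, which keeps $z_n$ in the unit cube while preserving the equation), that both the boundary inequality on $\partial F$ and the $\limsup$ condition as $x\cdot e-ct\to-\infty$ survive passing to the limit (which is why one separates the cases $h_n\to+\infty$ and $h_n$ bounded), and that in the interior case the strong maximum principle can be chained from the contact point all the way to $\partial F_\infty$ — this last point crucially uses $c\neq0$.
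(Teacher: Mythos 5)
Your proof is correct. Note first that the paper itself gives no proof of this lemma: it is quoted verbatim from \cite[Lemma~3.4]{BH1}, so there is nothing internal to compare against; what you have written is essentially the standard argument behind that reference (maximizing sequence for $M=\sup_F(\underline u-\overline u)$, reduction to the unit cell by $\Z^N$-translations using the periodicity of $A,q,f$, compactness via the assumed uniform H\"older bounds on the derivatives, a contact point for $\underline u_\infty$ and $\overline u_\infty+M$, the observation that near the contact point both functions lie in $[1-\delta,1]$ with $\underline u_\infty\ge\overline u_\infty$ so that Hypothesis~\ref{hypbis} makes $w=\overline u_\infty+M-\underline u_\infty$ a nonnegative supersolution of the zeroth-order-free linear operator, and finally the strong maximum principle propagated to $\partial F_\infty$). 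All the delicate points are handled: the maximizing sequence cannot drift to $\partial F$ (uniform Lipschitz bound plus $\underline u\le\overline u$ on $\partial F$) nor to $x\cdot e-ct\to-\infty$ (the $\limsup$ hypothesis), the boundary inequality and the differential inequalities survive the limit, and the case $h_n\to+\infty$ is disposed of by the $\limsup$ condition.

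Two cosmetic remarks. The ``connectedness check'' at the end is in fact immediate: $F_\infty=\{x\cdot e-ct\le h_\infty\}$ is a closed half-space of $\R\times\R^N$, hence convex, so any point of $\inter F_\infty\cap\{t\le0\}$ is joined to $(0,z_\infty)$ by a segment with non-increasing time lying in $\inter F_\infty$, and the usual bootstrap (the zero set of $w$ is relatively closed and contained in the open set $\{w<M\}$ where the supersolution inequality holds) lets the strong maximum principle run along it. Also, $c\neq0$ is not what makes $\partial F_\infty$ reachable: $\partial F_\infty\cap\{t\le0\}$ is nonempty for every $c$ (take $t=0$ and $x\cdot e=h_\infty$), so that hypothesis plays no role in your final step; it is simply part of the lemma's standing assumptions.
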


From the previous two lemmata, another comparison principle follows, between sub- and super-solutions of~\eqref{general} defined this time in $\R\times\R^N$.

\begin{lemma}\label{lem3}
Assume that Hypothesis~$\ref{hypbis}$ holds. Let $e\in\Sph$, $c>0$, and $\underline{u}:\R\times\R^N\to(0,1)$ and $\overline{u}:\R\times\R^N\to(0,1)$ be two $C^1$ functions which are $C^2$ in $x$ and are such that $\partial_t\underline{u}$, $\partial_t\overline{u}$, $\partial_{x_i}\underline{u}$, $\partial_{x_i}\overline{u}$, $\partial_{x_ix_j}\underline{u}$ and $\partial_{x_ix_j}\overline{u}$ are all bounded and H\"older-continuous in $\R\times\R^N$, for every $1\le i,j\le N$. Assume that
$$\left\{\baa{l}
\partial_t\underline{u}\le\dv(A(x)\nabla\underline{u})+q(x)\.\nabla\underline{u} + f(x,\underline{u})\ \hbox{ in }\R\times\R^N,\vspace{3pt}\\
\partial_t\overline{u}\ge\dv(A(x)\nabla\overline{u})+q(x)\.\nabla\overline{u} + f(x,\overline{u})\ \hbox{ in }\R\times\R^N,\vspace{3pt}\\
\underline{u}(t,x)\to1\hbox{ and }\overline{u}(t,x)\to1\ \hbox{ as }x\cdot e-ct\to-\infty,\vspace{3pt}\\
\underline{u}(t,x)\to0\ \hbox{ as }x\cdot e-ct\to+\infty.\eaa\right.$$
Then there is the smallest $T\in\R$ such that
$$\underline{u}(t,x)\le\overline{u}(t+T,x)\ \hbox{ for all $(t,x)\in\R\times\R^N$},$$
and there is a sequence $(t_k,x_k)_{k\in\N}$ in $\R\times\R^N$ such that $\sup_{k\in\N}|x_k\cdot e-ct_k|<+\infty$ and $\underline{u}(t_k,x_k)-\overline{u}(t_k+T,x_k)\to0$ as $k\to+\infty$.
\end{lemma}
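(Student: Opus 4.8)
The statement is a "sliding" comparison principle of the classical Berestycki–Nirenberg type, so the plan is to combine a sliding argument with the two half-space comparison principles already available as Lemmata~\ref{lem1} and~\ref{lem2}. First I would define
$$\mathcal{T}:=\big\{T\in\R:\underline{u}(t,x)\le\overline{u}(t+T,x)\ \text{for all }(t,x)\in\R\times\R^N\big\}$$
and show that $\mathcal{T}$ is non-empty, closed, and bounded below, so that $T:=\min\mathcal{T}$ exists. Non-emptiness is where the hypotheses on the limits enter: since $\underline{u}(t,x)\to0$ as $x\cdot e-ct\to+\infty$ and $\overline{u}(t,x)\to1$ as $x\cdot e-ct\to-\infty$, for $H$ large the set $\{x\cdot e-ct\le H\}$ will have $\overline{u}(\cdot+T,\cdot)\ge1-\delta$ there (for $T$ large, using $c>0$ so that shifting $t$ shifts the strip), while on the complementary half-space $\underline{u}\le\delta$; I would then glue an application of Lemma~\ref{lem2} on the left half-space $F=\{x\cdot e-ct\le h\}$ with an application of Lemma~\ref{lem1} on the right half-space $E=\{x\cdot e-ct\ge h\}$, choosing the level $h$ inside the overlap region where both $\underline{u}\le\delta$ and $\overline{u}(\cdot+T,\cdot)\ge1-\delta$ hold for $T$ large enough. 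The boundary matching $\underline{u}\le\overline{u}(\cdot+T,\cdot)$ on $\{x\cdot e-ct=h\}$ holds because there $\underline{u}\le\delta<1-\delta\le\overline{u}(\cdot+T,\cdot)$. Closedness of $\mathcal{T}$ is immediate by continuity, and boundedness below follows because if $T$ were very negative the shifted strip where $\overline{u}(\cdot+T,\cdot)$ is close to $1$ would be disjoint from (or far inside) the region where $\underline{u}$ is close to $1$, contradicting the inequality near a point where $\underline{u}$ is, say, $1/2$ — more concretely I would pick a sequence $(t_k,x_k)$ with $x_k\cdot e-ct_k$ bounded and $\underline{u}(t_k,x_k)$ bounded away from $0$ and $1$ and note that $\overline{u}(t_k+T,x_k)$ must then also be bounded away from $1$, forcing $T$ not too negative.

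The heart of the argument is then showing that the contact is attained, i.e.~producing the sequence $(t_k,x_k)$ with $\sup_k|x_k\cdot e-ct_k|<+\infty$ and $\underline{u}(t_k,x_k)-\overline{u}(t_k+T,x_k)\to0$. Suppose not: then $\inf_{\{|x\cdot e-ct|\le R\}}\big(\overline{u}(t+T,x)-\underline{u}(t,x)\big)>0$ for every $R>0$, and in particular $>0$ on the strip $\{|x\cdot e-ct|\le h'\}$ for a suitable $h'$. Outside this strip, on the two half-spaces, I would rerun Lemmata~\ref{lem1}–\ref{lem2} with $T$ replaced by $T-\eta$ for small $\eta>0$: on $\{x\cdot e-ct\ge h'\}$ we have $\underline{u}\le\delta$ and the far-field decay hypothesis gives the limsup condition, while on $\{x\cdot e-ct\le -h'\}$ we have $\overline{u}(\cdot+T-\eta,\cdot)\ge1-\delta$ for $\eta$ small (using the locally uniform, indeed $x$-uniform, limit of $\overline{u}$ together with parabolic estimates on $\overline{u}$ controlling the $\eta$-shift), and the boundary inequalities at $\{x\cdot e-ct=\pm h'\}$ hold for $\eta$ small by the strict positivity of $\overline{u}(\cdot+T,\cdot)-\underline{u}$ on the compact-in-the-strip-variable set and uniform continuity. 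This yields $T-\eta\in\mathcal{T}$, contradicting minimality. Hence the contact sequence exists.

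The main obstacle, I expect, is the bookkeeping in the last step: one must ensure that the strict inequality $\overline{u}(t+T,x)-\underline{u}(t,x)\ge\kappa>0$ on the strip $\{|x\cdot e-ct|\le h'\}$ (a set unbounded in the directions orthogonal to $e$) genuinely survives the $\eta$-shift and the glueing, which requires a uniform-in-space lower bound rather than merely a pointwise positive infimum — here the $\Z^N$-periodicity of the coefficients of~\eqref{general}, together with the uniform parabolic estimates on $\underline{u}$ and $\overline{u}$ recalled in the introduction, is what makes the infimum over the strip positive and makes the $\eta\mapsto\overline{u}(\cdot+\eta,\cdot)$ modulus of continuity uniform. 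A secondary delicate point is checking the hypothesis $\underline{u}\le\delta$ (resp.~$\overline{u}\ge1-\delta$) on the relevant half-spaces, which only uses the prescribed limits of $\underline{u}$ and $\overline{u}$, but one must be careful to choose the glueing level $h$ (and later $h'$) consistently so that both conditions hold simultaneously on the overlap — this is possible precisely because the two limit regions ($\underline{u}\to0$ and $\overline{u}\to1$) overlap once $\overline{u}$ is shifted far enough, which is exactly the content of $T\in\mathcal{T}$ being achievable.
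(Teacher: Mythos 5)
Your overall strategy --- establish that large shifts are admissible by gluing Lemma~\ref{lem1} on the right half-space $\{x\cdot e-ct\ge h\}$ with Lemma~\ref{lem2} on the left one, take the infimum $T$ of the admissible shifts, and show by a second application of the two lemmata that a positive gap on the strip $\{|x\cdot e-ct|\le h\}$ would allow a further slide, contradicting minimality --- is exactly the paper's proof, and those steps go through as you describe. (The paper's choice of level is $h=B$ with $\underline u\le\delta$ on $\{x\cdot e-ct\ge B\}$ and $\min(\underline u,\overline u)\ge 1-\delta/2$ on $\{x\cdot e-ct\le -B\}$; it sidesteps your worry about the level moving under the time-shift by observing that $\overline u(\cdot+T,\cdot)\ge\underline u\ge 1-\delta/2$ on $\{x\cdot e-ct\le-B\}$, so that $\overline u(\cdot+\sigma,\cdot)\ge 1-\delta$ there for $\sigma$ slightly below $T$ simply because $\partial_t\overline u$ is bounded.)

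The one step that would fail as written is your argument that the admissible set is bounded below. You propose to take points $(t_k,x_k)$ with $x_k\cdot e-ct_k$ bounded and $\underline u(t_k,x_k)$ bounded away from $0$ and $1$, and to ``note that $\overline u(t_k+T,x_k)$ must then also be bounded away from $1$''. Nothing in the hypotheses yields such an upper bound: $\overline u$ is only assumed to be $<1$ pointwise and is only controlled (it tends to $1$) as $x\cdot e-ct\to-\infty$; for $T\ll 0$ the points $(t_k+T,x_k)$ lie where $x\cdot e-ct$ is large and positive, a region where no hypothesis prevents $\overline u$ from being arbitrarily close to $1$. Moreover, the comparison inequality at such a point only gives a \emph{lower} bound on $\overline u(t_k+T,x_k)$, so no contradiction can come from this choice of test points. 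The correct (and short) argument reads the inequality in the opposite direction: if shifts $\tau_n\to-\infty$ were all admissible, then for any fixed $(s,y)$ one would have $\overline u(s,y)\ge\underline u(s-\tau_n,y)$, and since $y\cdot e-c(s-\tau_n)\to-\infty$ (here $c>0$ is used), the right-hand side tends to $1$, forcing $\overline u(s,y)\ge 1$ and contradicting $\overline u<1$. With this replacement your proof is complete and coincides with the paper's.
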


\begin{proof}
Let $B>0$ such that
$$\underline{u}\le\delta\ \hbox{ in $E:=\{(t,x)\in\R\times\R^N:x\cdot e-ct\ge B\}$}$$
and
$$\min(\underline{u},\overline{u})\ge1-\delta/2\ \hbox{ in $F:=\{(t,x)\in\R\times\R^N:x\cdot e-ct\le-B\}$}.$$
For any $\tau\ge2B/c$, calling $\overline{u}^\tau:=\overline{u}(\cdot+\tau,\cdot)$, one has $\underline{u}\le\delta\le1-\delta\le1-\delta/2\le\overline{u}^\tau$ on $\partial E$, and then $\underline{u}\le\overline{u}^\tau$ in $E$ by Lemma~\ref{lem1} applied to $\underline{u}$ and $\overline{u}^\tau$ in $E$ (notice that the partial diffierential inequation satisfied by $\overline{u}$ in $\R\times\R^N$ holds as well for $\overline{u}^s$ for each $s\in\R$, since the coefficients $A$, $q$ and $f$ are independent of $t$). On the other hand, still for any $\tau\ge 2B/c$, one has $\overline{u}^\tau\ge1-\delta/2\ge1-\delta$ in $(\R\times\R^N)\setminus E$, and then $\underline{u}\le\overline{u}^\tau$ in $(\R\times\R^N)\setminus E$ by applying this time Lemma~\ref{lem2} in this set. Therefore, $\underline{u}\le\overline{u}^\tau$ in $\R\times\R^N$ for any $\tau\ge2B/c$. 

Call now
$$T:=\inf\big\{\tau\in\R:\underline{u}\le\overline{u}^\tau\hbox{ in $\R\times\R^N$}\big\}.$$
From the previous paragraph and the fact that $\overline{u}<1$ in $\R\times\R^N$ and $\underline{u}(t,x)\to1$ as $x\cdot e-ct\to-\infty$, one gets that $T$ is a real number, and that $\underline{u}\le\overline{u}^T$ in $\R\times\R^N$ (that is, the first part of the desired conclusion). In particular, $\overline{u}^T\ge\underline{u}\ge1-\delta/2$ in $F$. Define
\be\label{defeta}
\eta:=\inf_{(t,x)\in\R\times\R^N,\,|x\cdot e-ct|\le B}\big(\overline{u}^T(t,x)-\underline{u}(t,x)\big)\ge0,
\ee
and assume by way of contradiction that $\eta>0$. Since $\partial_t\overline{u}$ is bounded in $\R\times\R^N$, there is $\sigma<T$ such that $\overline{u}^\sigma\ge1-\delta$ in $F$ and $\overline{u}^\sigma(t,x)-\underline{u}(t,x)\ge0$ for all $(t,x)\in\R\times\R^N$ such that $|x\cdot e-ct|\le B$. By Lemma~\ref{lem1} applied to $\underline{u}$ and $\overline{u}^\sigma$ in~$E$, one gets that $\underline{u}\le\overline{u}^\sigma$ in~$E$. Furthermore, by Lemma~\ref{lem2} applied to $\underline{u}$ and $\overline{u}^\sigma$ in~$F$, one also gets that $\underline{u}\le\overline{u}^\sigma$ in~$F$. Finally, $\underline{u}\le\overline{u}^\sigma$ in $\R\times\R^N$, contradicting the definition of $T$.

As a conclusion, $\eta$ in~\eqref{defeta} is equal to $0$, and the conclusion of Lemma~\ref{lem3} follows.
\end{proof}


\subsection{Proof of Proposition~\ref{pro:U}}\label{sec42}

The proof of Proposition~\ref{pro:U} makes use of some results of~\cite{BH3}, and of Lemmata~\ref{lem1} and~\ref{lem3}.

\begin{proof}[Proof of Proposition~$\ref{pro:U}$]
We assume Hypotheses~\ref{hyp:c*>0} and~\ref{hypbis} throughout the proof. We recall that Hypothesis~\ref{hypbis} is weaker than Hypothesis~\ref{hyp:comb-bi}.
\vskip 0.2cm
\noindent{\it Step 1: monotonicity and uniqueness}. First of all, for any direction $e\in\Sph$, the pulsating traveling front $\phi_e(t,x)=U_e(x,x\cdot e-c^*(e)t)$ given in~\eqref{ptf1}-\eqref{ptf2} with $c^*(e)>0$ is an invasion of $0$ by $1$ in the sense of~\cite[Definition~1.4]{BH3}, since $c^*(e)>0$ by Hypothesis~\ref{hyp:c*>0}. It then follows from~\cite[Theorem~1.11]{BH3} that $\phi_e$ is increasing in time~$t$. Furthermore, since the coefficients of~\eqref{general} are independent of~$t$, the nonnegative function $\partial_t\phi_e$ is, from parabolic regularity theory, a classical solution of
\be\label{partialtphie}
\partial_t(\partial_t\phi_e)=\dv(A(x)\nabla\partial_t\phi_e)+q(x)\.\nabla\partial_t\phi_e +\partial_sf(x,\phi_e)\,\partial_t\phi_e
\ee
in $\R\times\R^N$, whence $\partial_t\phi_e>0$ in $\R\times\R^N$ from the strong parabolic maximum principle. Furthermore, the speed $c^*(e)$ is necessarily unique by~\cite[Theorem~1.12]{BH3} and $\phi_e$ is unique up to shift in~$t$ by~\cite[Theorems~1.12 and~1.14]{BH3}. In other words, $\partial_zU_e(x,z)<0$ in $\R^N\times\R$, and $U_e$ is unique up to shift in $z$.

\vskip 0.2cm
\noindent{\it Step 2: exponential decay estimates~\eqref{Ueexp} for $U_e$}. Given a direction $e\in\Sph$, it follows from the notations~\eqref{evp} and Lemma~\ref{lemkelambda}, together with the continuity of ${\lambda\mapsto k_e(\lambda)}$ and the positivity of $c^*(e)$ by Hypothesis~\ref{hyp:c*>0}, that there exists $\lambda_0>0$ such that
$$k_e(\lambda_0)\le c^*(e)\,\lambda_0.$$
Let $\varphi_0$ be the positive periodic $C^{2,\alpha}(\R^N)$ eigenfunction associated to the eigenvalue problem~\eqref{evp} with parameter $\lambda:=\lambda_0$, normalized with $\max_{\R^N}\!\varphi_0=1$. Let $\delta>0$ be given by Hypothesis~\ref{hypbis} and let $\delta_0:=\delta\times\min_{\R^N}\!\varphi_0\in(0,\delta]$. By~\eqref{ptf2}, there is $h_0\in\R$ such that $0<U_e(x,z)\le\delta_0$ for all $(x,z)\in\R^N\times[h_0,+\infty)$, that is, $0<\phi_e\le\delta_0$ in $E:=\{(t,x)\!\in\!\R\times\R^N:x\cdot e-c^*(e)t\ge h_0\}$. Consider now the function $\overline{u}$ defined in~$E$~by
$$\overline{u}(t,x):=\frac{\delta_0\varphi_0(x)}{\min_{\R^N}\!\varphi_0}\,e^{-\lambda_0(x\cdot e-c^*(e)t-h_0)}.$$
Since $k_e(\lambda_0)\le c^*(e)\,\lambda_0$, the function $\overline{u}$ satisfies $\partial_t\overline{u}\ge\dv(A(x)\nabla\overline{u})+q(x)\.\nabla\overline{u}$ in~$E$. Furthermore, $0<\overline{u}\le\delta_0/(\min_{\R^N}\!\varphi_0)=\delta$ in $E$, whence $f(x,\overline{u})\le0$ in $E$ by Hypothesis~\ref{hypbis} and $f(\cdot,0)\equiv0$ in $\R^N$, and finally
$$\partial_t\overline{u}\ge\dv(A(x)\nabla\overline{u})+q(x)\.\nabla\overline{u}+f(x,\overline{u})\ \hbox{ in }E.$$
On the other hand, $0<\phi_e\le\delta_0\le\delta$ in $E$, while $\phi_e(t,x)\to0$ and $\overline{u}(t,x)\to0$ as $x\cdot e-c^*(e)t\to+\infty$, and $\phi_e(t,x)\le\delta_0\le\overline{u}(t,x)$ for all $(t,x)\in\R\times\R^N$ such that $x\cdot e-c^*(e)t=h_0$, that is, $(t,x)\in\partial E$. It follows from Lemma~\ref{lem1} that $\phi_e\le\overline{u}$ in $E$, that is,
$$U_e(x,z)\le\frac{\delta_0\varphi_0(x)}{\min_{\R^N}\!\varphi_0}\,e^{-\lambda_0(z-h_0)}\le\frac{\delta_0}{\min_{\R^N}\!\varphi_0}\,e^{-\lambda_0(z-h_0)}=\delta\,e^{-\lambda_0(z-h_0)}\le e^{-\lambda_0(z-h_0)}.$$
Together with the inequality $U_e<1$ in $\R\times\R^N$,~\eqref{Ueexp} follows with $C:=e^{\lambda_0h_0}$.

\vskip 0.2cm
\noindent{\it Step 3: continuity of the map $e\mapsto c^*(e)$ from $\Sph$ to $\R_+=(0,+\infty)$}. Remember first that, for each $e\in\Sph$, the pulsating front $\phi_e:\R\times\R^N\to(0,1)$ given in~\eqref{ptf1} is of class~$C^1$ in~$(t,x)$,~$C^2$ in~$x$, and $\partial_t\phi_e$, $\partial_{x_i}\phi_e$ and $\partial_{x_ix_j}\phi_e$ are all bounded and H\"older-continuous in $\R\times\R^N$, for every $1\le i,j\le N$. Furthermore, from parabolic regularity theory, the function $\partial_t\phi_e$ is a classical solution of~\eqref{partialtphie} in $\R\times\R^N$, and $\partial_{tt}\phi_e$ and $\partial_{tx_i}\phi_e$ are also bounded and H\"older-continuous in $\R\times\R^N$, for every $1\le i\le N$ (as are also the second-order partial derivatives of $\partial_t\phi_e$ with respect to $x$).

Consider a sequence $(e_n)_{n\in\N}$ in $\Sph$ converging to a direction~$e$ in~$\Sph$. For each $n\in\N$, call
$$u_n:\R\times\R^N\to(0,1),\quad (t,x)\mapsto u_n(t,x):=\phi_{e_n}(t,x),$$
which solves~\eqref{general} in $\R\times\R^N$, and define a function $v_n$ in $\R\times\R^N$ by
$$v_n(t,x):=U_e(x,x\cdot e_n-c^*(e_n)t)=\phi_e\Big(\frac{c^*(e_n)t+x\cdot(e-e_n)}{c^*(e)},x\Big).$$
Notice that both functions $u_n$ and $v_n$ are of class $C^2$ in $\R\times\R^N$, with bounded and H\"older-continuous first- and second-order partial derivatives in the $(t,x)$ variables. From~\eqref{ptf2}, there holds $u_n(t,x)\to1$ and $v_n(t,x)\to1$, resp. $0$, as $x\cdot e_n-c^*(e_n)t\to-\infty$, resp. $+\infty$. By definition, the function $u_n$ solves~\eqref{general} in $\R\times\R^N$, while the function~$v_n$ satisfies
$$\baa{l}
\partial_tv_n(t,x)-\dv(A(x)\nabla v_n)(t,x)-q(x)\cdot\nabla v_n(t,x)-f(x,v_n(t,x))\vspace{3pt}\\
\displaystyle\qquad\qquad\qquad\qquad\qquad=\frac{c^*(e_n)-c^*(e)}{c^*(e)}\,\partial_t\phi_e\Big(\frac{c^*(e_n)t+x\cdot(e-e_n)}{c^*(e)},x\Big)+R_n(t,x)\eaa$$
for all $(t,x)\in\R\times\R^N$, where
$$\baa{rcl}
R_n(t,x) & := & -\displaystyle\frac{\dv(A(x)(e-e_n))+q(x)\cdot(e-e_n)}{c^*(e)}\,\partial_t\phi_e\Big(\frac{c^*(e_n)t+x\cdot(e-e_n)}{c^*(e)},x\Big)\vspace{3pt}\\
& & \displaystyle-\frac{2}{c^*(e)}\,(e-e_n)A(x)\nabla\partial_t\phi_e\Big(\frac{c^*(e_n)t+x\cdot(e-e_n)}{c^*(e)},x\Big)\vspace{3pt}\\
& & \displaystyle-\frac{(e-e_n)A(x)(e-e_n)}{(c^*(e))^2}\partial_{tt}\phi_e\Big(\frac{c^*(e_n)t+x\cdot(e-e_n)}{c^*(e)},x\Big).\eaa$$
From parabolic estimates applied to the positive solution $\partial_t\phi_e$ of~\eqref{partialtphie}, there is a constant $C_1>0$ such that
$$\forall\,(t,x)\in\R\times\R^N,\quad|\nabla\partial_t\phi_e(t,x)|+|\partial_{tt}\phi_e(t,x)|\le C_1\times\max_{[t-1,t]\times\overline{B_1(x)}}\partial_t\phi_e.$$
Pick any vector $\xi\in\Z^N$ such that $\xi\cdot e>0$. From the parabolic Harnack inequality applied to $\partial_t\phi_e$, there is a constant $C_2>0$ such that
$$\forall\,(t,x)\in\R\times\R^N,\quad\max_{[t-1,t]\times\overline{B_1(x)}}\partial_t\phi_e\le C_2\times\partial_t\phi_e\Big(t+\frac{\xi\cdot e}{c^*(e)},x+\xi\Big).$$
But, for all $(t,x)\in\R\times\R^N$, $\phi_e(t,x)=\phi_e(t+(\xi\cdot e)/c^*(e),x+\xi)$ by~\eqref{ptf1}-\eqref{ptf2}, whence $\partial_t\phi_e(t,x)=\partial_t\phi_e(t+(\xi\cdot e)/c^*(e),x+\xi)$. Finally, one gets that
$$\forall\,(t,x)\in\R\times\R^N,\quad|\nabla\partial_t\phi_e(t,x)|+|\partial_{tt}\phi_e(t,x)|\le C_1C_2\times\partial_t\phi_e(t,x).$$

Assume now by way of contradiction that $c^*(e_n)\not\to c^*(e)$ as $n\to+\infty$. Up to extraction of a subsequence, two cases may occur: either there is $\epsilon>0$ such that $c^*(e_n)\ge c^*(e)+\epsilon$ for all $n\in\N$, or there is $\epsilon>0$ such that $c^*(e_n)\le c^*(e)-\epsilon$ for all $n\in\N$. Consider first the former case. From the previous paragraph, since $e_n\to e$ as $n\to+\infty$, there is $N_1\in\N$ large enough such that
$$|R_n(t,x)|\le\frac{\epsilon}{2c^*(e)}\,\partial_t\phi_e\Big(\frac{c^*(e_n)t+x\cdot(e-e_n)}{c^*(e)},x\Big)$$
for all $n\ge N_1$ and $(t,x)\in\R\times\R^N$, whence
\be\label{strictsuper}\baa{r}
\partial_tv_n(t,x)-\dv(A(x)\nabla v_n)(t,x)-q(x)\cdot\nabla v_n(t,x)-f(x,v_n(t,x))\vspace{3pt}\\
\displaystyle\ge\frac{\epsilon}{2c^*(e)}\,\partial_t\phi_e\Big(\frac{c^*(e_n)t+x\cdot(e-e_n)}{c^*(e)},x\Big)>0.\eaa
\ee
Take any $n\ge N_1$. By Lemma~\ref{lem3} applied with direction $e_n\in\Sph$, $c:=c^*(e_n)$, $\underline{u}:=u_n=\phi_{e_n}$ and $\overline{u}:=v_n$, there are $T\in\R$ and a sequence $(t_k,x_k)_{k\in\N}$ in $\R\times\R^N$ such that
$$U_e(x,x\cdot e_n-c^*(e_n)(t+T))=v_n(t+T,x)\ge\phi_{e_n}(t,x)=U_{e_n}(x,x\cdot e_n-c^*(e_n)t)$$
for all $(t,x)\in\R\times\R^N$ and
\be\label{limitsvn}
\sup_{k\in\N}|x_k\cdot e_n-c^*(e_n)t_k|<+\infty,\ \ \ v_n(t_k+T,x_k)-\phi_{e_n}(t_k,x_k)\to 0\hbox{ as }k\to+\infty.
\ee
For each $k\in\N$, let $y_k\in\Z^N$ and $z_k\in[0,1)^N$ such that $x_k=y_k+z_k$, and call
$$\sigma_k:=x_k\cdot e_n-c^*(e_n)t_k\ \hbox{ and }\ \tau_k:=\frac{z_k\cdot e_n-\sigma_k}{c^*(e_n)}.$$
Since the sequences $(\sigma_k)_{k\in\N}$ and $(z_k)_{k\in\N}$ are bounded, so is the sequence $(\tau_k)_{k\in\N}$ and one can assume, without loss of generality, that $\tau_k\to\tau\in\R$ and $z_k\to z\in\R^N$ as $k\to+\infty$. Now, by~\eqref{ptf1}-\eqref{ptf2} applied with directions $e_n$ and $e$, there holds that
$$\phi_{e_n}(t_k,x_k)=\phi_{e_n}(\tau_k,z_k)\ \hbox{ and }\ v_n(t_k+T,x_k)=v_n(\tau_k+T,z_k)$$
for all $k\in\N$, whence $v_n(\tau+T,z)=\phi_{e_n}(\tau,z)$ by passing to the limit $k\to+\infty$ in the second part of~\eqref{limitsvn}. The strong parabolic maximum principle then entails that $v_n(t+T,x)=\phi_{e_n}(t,x)$ for all $t\le\tau$ and $x\in\R^N$, a contradiction with the strict inequality in~\eqref{strictsuper}. Therefore, the case $c^*(e_n)\ge c^*(e)+\epsilon$ (for all $n\in\N$ or just for a subsequence) is ruled out.

Similarly, if $c^*(e_n)\le c^*(e)-\epsilon$ for all $n\in\N$ (or just for a subsequence), one is also led to a contradiction, by using this time Lemma~\ref{lem3} with direction $e_n\in\Sph$, $c:=c^*(e_n)$, $\underline{u}:=v_n$ and $\overline{u}:=u_n=\phi_{e_n}$, for $n$ large enough.

As a conclusion, the map $e\mapsto c^*(e)$ is continuous from $\Sph$ to $(0,+\infty)$.

\vskip 0.2cm
\noindent{\it Step 4: continuity of the profiles $\phi_e$ with respect to $e$}. Consider any $e\in\Sph$, any sequence $(e_n)_{n\in\N}$ in $\Sph$ converging to $e$, and any $\mu\in(0,1)$. Up to shifts in $t$, normalize the fronts $\phi_{e_n}(t,x)=U_{e_n}(x,x\cdot e_n-c^*(e_n)t)$ and $\phi_e(t,x)=U_e(x,x\cdot e-c^*(e)t)$ so that
$$\phi_{e_n}(0,0)=\phi_e(0,0)=\mu.$$
Notice that this is, uniquely, possible since the maps $t\mapsto\phi_{e_n}(t,0)=U_{e_n}(0,-c^*(e_n)t)$ and $t\mapsto\phi_e(t,0)=U_e(0,-c^*(e)t)$ are continuous increasing and $\phi_{e_n}(-\infty,0)=\phi_e(-\infty,0)=0$, $\phi_{e_n}(+\infty,0)=\phi_e(+\infty,0)=1$, by~\eqref{ptf1}-\eqref{ptf2} and the positivity of $c^*(e_n)$ and $c^*(e)$. For each $n\in\N$, let also $t_n$ be the unique real number such that
\be\label{deftn}
\min_{x\in[0,1]^N}\phi_{e_n}\Big(t_n+\frac{x\cdot e_n}{c^*(e_n)},x\Big)=\min_{x\in[0,1]^N}U_{e_n}(x,-c^*(e_n)t_n)=1-\delta,
\ee
and let $t_e$ be the unique real number such that
\be\label{defte}
\min_{x\in[0,1]^N}\phi_e\Big(t_e+\frac{x\cdot e}{c^*(e)},x\Big)=\min_{x\in[0,1]^N}U_e(x,-c^*(e)t_e)=1-\delta.
\ee

Define, for $n\in\N$ and $(t,x)\in\R\times\R^N$,
$$u_n(t,x):=\phi_{e_n}(t+t_n,x)=U_{e_n}(x,x\cdot e_n-c^*(e_n)(t+t_n)).$$
Each function $u_n:\R\times\R^N\to(0,1)$ is a classical solution of~\eqref{general} and, from standard parabolic estimates, the sequence $(u_n)_{n\in\N}$ converges, up to extraction of a subsequence, locally uniformly in $\R\times\R^N$ to a classical solution $u:\R\times\R^N\to[0,1]$ of~\eqref{general}, with bounded and H\"older-continuous partial derivatives $\partial_tu$, $\partial_{x_i}u$ and $\partial_{x_ix_j}u$ (for every $1\le i,j\le N$). Moreover, using the limit $\lim_{n\to+\infty}c^*(e_n)=c^*(e)$ from Step~3, one gets from~\eqref{deftn} that
\be\label{u1-delta}
\min_{x\in[0,1]^N}u\Big(\frac{x\cdot e}{c^*(e)},x\Big)=1-\delta.
\ee
Therefore, one infers from the maximum principle, together with $f(\cdot,0)=f(\cdot,1)=0$ in~$\R^N$, that $0<u(t,x)<1$ for all $(t,x)\in\R\times\R^N$. On the other hand, the definition of~$u_n$ and~\eqref{ptf2} applied with the direction $e_n$ imply that $u_n(t+(\xi\cdot e_n)/c^*(e_n),x+\xi)=u_n(t,x)$ for all $n\in\N$, $(t,x)\in\R\times\R^N$ and $\xi\in\Z^N$, whence
\be\label{uperiodic}
\forall\,(t,x)\in\R\times\R^N,\ \forall\,\xi\in\Z^N,\ \ u\Big(t+\frac{\xi\cdot e}{c^*(e)},x+\xi\Big)=u(t,x).
\ee

Furthermore, by~\eqref{deftn}, for every $n\in\N$ and $(t,x)\in\R\times\R^N$ such that $x\cdot e_n-c^*(e_n)t\le0$, one has
$$1>u_n(t,x)=U_{e_n}(x,x\cdot e_n-c^*(e_n)(t+t_n))\ge U_{e_n}(x,-c^*(e_n)t_n)\ge1-\delta$$
since $U_{e_n}$ is decreasing in its second variable and periodic in its first variable. Therefore,
\be\label{u1delta}
1\ge u(t,x)\ge1-\delta\hbox{ for all $(t,x)\in\R\times\R^N$ such that $x\cdot e-c^*(e)t<0$}
\ee
(and also when $x\cdot e-c^*(e)t=0$ by continuity of $u$). We now claim that
\be\label{claimu1}
u(t,x)\to1\hbox{ as $x\cdot e-c^*(e)t\to-\infty$}.
\ee
Indeed, otherwise there would exist a sequence $(\tau_k,x_k)_{k\in\N}$ such that $x_k\cdot e-c^*(e)\tau_k\to-\infty$ as $k\to+\infty$ and $\limsup_{k\to+\infty}u(\tau_k,x_k)<1$. By writing $x_k=y_k+z_k$ with $y_k\in\Z^N$ and $z_k\in[0,1)^N$, the functions $\tilde{u}_k(t,x):=u(t+\tau_k,x+y_k)$ then converge locally uniformly in $\R\times\R^N$, up to extraction of a subsequence, to a classical solution $\tilde{u}$ of~\eqref{general} in $\R\times\R^N$ such that $\tilde{u}(0,z)<1$ for some $z\in[0,1]^N$. Furthermore, $1-\delta\le\tilde{u}\le1$ in $\R\times\R^N$ from~\eqref{u1delta} and $\lim_{k\to+\infty}x_k\cdot e-c^*(e)\tau_k=-\infty$. That contradicts Lemma~\ref{lem>0bis}. Therefore, the claim~\eqref{claimu1} is proved.

Apply now Lemma~\ref{lem3} with $c:=c^*(e)$, $\underline{u}:=\phi_e$ and $\overline{u}:=u$. There exist then $T\in\R$ and a sequence $(s_m,\zeta_m)_{m\in\N}$ in $\R\times\R^N$ such that $u(\cdot+T,\cdot)\ge\phi_e$ in $\R\times\R^N$, $\sup_{m\in\N}|\zeta_m\cdot e-c^*(e)s_m|<+\infty$ and
\be\label{smzetam}
u(s_m+T,\zeta_m)-\phi_e(s_m,\zeta_m)\to0\ \hbox{ as $m\to+\infty$}.
\ee
For each $m\in\N$, let $\xi_m\in\Z^N$ and $\omega_m\in[0,1)^N$ such that $\zeta_m=\xi_m+\omega_m$, and call
$$\nu_m:=\zeta_m\cdot e-c^*(e)s_m\ \hbox{ and }\ \theta_m:=\frac{\omega_m\cdot e-\nu_m}{c^*(e)}.$$
Since the sequences $(\nu_m)_{m\in\N}$ and $(\omega_m)_{m\in\N}$ are bounded, so is the sequence $(\theta_m)_{m\in\N}$ and one can assume, without loss of generality, that $\theta_m\to\theta\in\R$ and $\omega_m\to\omega\in\R^N$ as $m\to+\infty$. Now, by~\eqref{ptf1}-\eqref{ptf2} and~\eqref{uperiodic}, there holds that
$$\phi_e(s_m,\zeta_m)=\phi_e(\theta_m,\omega_m)\ \hbox{ and }\ u(s_m+T,\zeta_m)=u(\theta_m+T,\omega_m)$$
for all $m\in\N$, whence $u(\theta+T,\omega)=\phi_e(\theta,\omega)$ by passing to the limit $m\to+\infty$ in~\eqref{smzetam}. The strong parabolic maximum principle then entails that
$$u(t+T,x)=\phi_e(t,x)$$
for all $t\le\theta$ and $x\in\R^N$, and then in $\R\times\R^N$ since both $u$ and $\phi_e$ solve~\eqref{general} in $\R\times\R^N$. Together with~\eqref{defte}-\eqref{u1-delta} and the positivity of $\partial_t\phi_e$, one infers that $T=-t_e$, that~is,
$$u\equiv\phi_e(\cdot+t_e,\cdot)\ \hbox{ in $\R\times\R^N$}.$$
Notice that, by uniqueness of the limit and compactness, the whole sequence $(u_n)_{n\in\N}$ then converges to $\phi_e(\cdot+t_e,\cdot)$ locally uniformly in $\R\times\R^N$.

Finally, remember that $\phi_{e_n}=u_n(\cdot-t_n,\cdot)$ in $\R\times\R^N$ for each $n\in\N$. If, by way of contradiction, $t_n\to-\infty$ up to a subsequence, then for each $\tau\in\R$, there holds $\mu=\phi_{e_n}(0,0)=u_n(-t_n,0)\ge u_n(\tau,0)$ for $n$ large enough (since $\partial_tu_n>0$), whence passing to the limits $n\to+\infty$
yields $\mu\ge u(\tau,0)=\phi_e(\tau+t_e,0)$ and then letting $\tau\to+\infty$ gives $\mu\ge1$. This is a contradiction. Similarly, the sequence $(t_n)_{n\in\N}$ is necessarily bounded from above too. Finally, it is bounded and, up to extraction of a subsequence, it converges to a real number $t_\infty$. Therefore,
$$\mu=\phi_{e_n}(0,0)=u_n(-t_n,0)\to u(-t_\infty,0)=\phi_e(t_e-t_\infty,0)\ \hbox{ as $n\to+\infty$},$$
whence $\phi_e(t_e-t_\infty,0)=\mu=\phi_e(0,0)$ and then $t_\infty=t_e$ by strict monotonicity of $\phi_e$ with respect to its first variable. As a conclusion, by uniqueness of the limit, the whole sequence $(t_n)_{n\in\N}$ converges to $t_e$ and the sequence $(\phi_{e_n})_{n\in\N}$ converges locally uniformly in $\R\times\R^N$ to $u(\cdot-t_e,\cdot)=\phi_e$. 
Lastly, since $U_e(x,z)=\phi_e((x\cdot e-z)/c^*(e),x)$ for all $e\in\Sph$ and $(x,z)\in\R^N\times\R$, the continuity of the map $e\mapsto c^*(e)$ from $\Sph$ to $(0,+\infty)$ and the continuity of the map $e\mapsto\phi_e$ from $\Sph$ to $L^\infty_{loc}(\R\times\R^N)$ (with the normalization $\phi_e(0,0)=\mu)$ entail the continuity of the map $e\mapsto U_e$ from $\Sph$ to $L^\infty_{loc}(\R^N\times\R)$. Furthermore, since every $U_e$ satisfies $U_e(x,z)\to1$ and $0$ as $z\to-\infty$ and $+\infty$ uniformly in $x\in\R^N$ and $U_e$ ranges in $(0,1)$ (by monotonicity in $z$), one concludes that the map $e\mapsto U_e$ is continuous from $\Sph$ to $L^\infty(\R^N\times\R)$. The proof of Proposition~\ref{pro:U} is thereby complete.
\end{proof}


\subsection{Proof of Proposition~\ref{pro:c>0invasion}}\label{sec43}

The proof of Proposition~\ref{pro:c>0invasion} relies on some results of~\cite{DR} and on Lemma~\ref{lem>0bis}.
 
\begin{proof}[Proof of Proposition~$\ref{pro:c>0invasion}$]
Suppose that Hypotheses~\ref{hyp:c*>0} and~\ref{hypbis} (weaker than Hypothesis~\ref{hyp:comb-bi}) hold. Then using the results of~\cite{DR} one infers that the invasion property for~\eqref{general} holds as well. More precisely,~\cite[Theorem~3]{DR} states that the invasion property holds under Hypotheses~\ref{hyp:c*>0} and~\ref{hypbis}, and under the additional assumption that there exists $\theta\in(0,1)$ such that $f(x,s)>0$ for all $x\in\R^N$ and $s\in[\theta,1)$. However, that additional assumption is only used to guarantee the property that, for any half-space $E:=\{x\in\R^N:x\cdot e\le0\}$ (with $e\in\Sph$) and for any solution~$w$ to~\eqref{general} defined in $\R\times E$ and satisfying $\theta\leq w\leq1$ in $\R\times E$, there holds $w(t,x)\to1$ as $x\cdot e\to-\infty$ uniformly in $t\in\R$, see~\cite[Lemma~4]{DR} and~\cite[Lemma~9]{D}. As in the proof of~\eqref{claimu1}, from standard parabolic estimates and the invariance of the equation by translation in time, it is sufficient to show that, for any entire (defined in $\R\times\R^N$) solution $v$ to~\eqref{general},
$$\big(\theta\leq v\leq1\hbox{ in $\R\times\R^N$}\big)\ \Longrightarrow\ \big(v\equiv1\hbox{ in $\R\times\R^N$}\big).$$
But this Liouville-type result holds here, under Hypotheses~\ref{hyp:c*>0} and~\ref{hypbis}, with $\theta:=1-\delta$, by Lemma~\ref{lem>0bis}. The proof of Proposition~\ref{pro:c>0invasion} is thereby complete.
\end{proof}


\subsection{Proof of Proposition~\ref{proinvasion}}\label{sec44}

In order to prove Proposition~\ref{proinvasion}, we construct suitable compactly supported sub-solutions for the associated stationary equations, with large enough maxima.

\begin{proof}[Proof of Proposition~$\ref{proinvasion}$]
Let $g:[0,1]\to\R$ be a $C^1$ function such that $g(0)=g(1)=0$ and satisfying~\eqref{theta} and $g'(1)<0$. Let us extend $g$ in $(-\infty,0)$ so that $g$ is continuously decreasing in $(-\infty,0]$. 

First of all, for all $\eta>0$ small enough, there are $a_\eta<0$ and $b_\eta\in(0,1)$ such that $g(a_\eta)=g(b_\eta)=\eta$ and $0<g(s)<\eta$ for all $s\in(b_\eta,1)$ (namely, $b_\eta=\max\{b<1:g(b)=\eta\}$). Furthermore, $a_\eta\to0$ and $b_\eta\to1$ as $\eta\to0^+$, and one can fix in the sequel $\eta>0$ small enough such that the function $g-\eta$ satisfies~\eqref{theta} with the extremes $0$ and $1$ replaced by $a_\eta$ and $b_\eta$ respectively. In particular, $g-\eta>0$ in an interval $[c_\eta,b_\eta)$, for some $c_\eta\in(0,b_\eta)$.

Now, from the variational methods of~\cite{BL} (see also~\cite[Lemma 3.3]{HN2}), there are then $R_0>0$ and a $C^2(\overline{B_{R_0}})$ function $\varphi:\overline{B_{R_0}}\to[a_\eta,b_\eta)$ solving
$$\left\{\baa{ll}
\Delta\varphi+g(\varphi)-\eta=0 & \hbox{in $\overline{B_{R_0}}$},\vspace{3pt}\\
a_\eta<\varphi<b_\eta & \hbox{in $B_{R_0}$},\vspace{3pt}\\
c_\eta<\displaystyle\max_{\overline{B_{R_0}}}\varphi=\varphi(0)<b_\eta,\vspace{3pt}\\
\varphi=a_\eta & \hbox{on $\partial B_{R_0}$}.\eaa\right.$$
Furthermore, $\varphi$ is radially symmetric and decreasing with respect to $|x|$ in $\overline{B_{R_0}}$~\cite{GNN}. Let  then~$R$ be an orthogonal matrix and $D$ be a positive diagonal matrix, representing some automorphisms $\mathcal{R}$ and $\mathcal{D}$ in the canonical basis of $\R^N$, such that $(A_{ij})_{1\le i,j\le N}=RD^2R^{-1}$. Calling $\mathcal{E}$ the open ellipsoid $\mathcal{R}\circ\mathcal{D}(B_{R_0})$, it follows that the $C^2(\overline{\mathcal{E}})$ function $x\mapsto\psi(x):=\varphi(\mathcal{D}^{-1}\circ\mathcal{R}^{-1}(x))$ solves
$$\left\{\baa{ll}
\displaystyle\sum_{1\le i,j\le N}A_{ij}\partial_{x_ix_j}\psi+g(\psi)=\eta & \hbox{in $\overline{\mathcal{E}}$},\vspace{3pt}\\
a_\eta<\psi<b_\eta & \hbox{in $\mathcal{E}$},\vspace{3pt}\\
c_\eta<\displaystyle\max_{\overline{\mathcal{E}}}\psi=\psi(0)<b_\eta,\vspace{3pt}\\
\psi=a_\eta & \hbox{on $\partial\mathcal{E}$}.\eaa\right.$$
Let us also extend $\psi$ by $a_\eta$ in $\R^N\setminus\overline{\mathcal{E}}$. In particular, $\psi$ is then continuous in $\R^N$ and the set where $\psi$ is positive is bounded and included in $\mathcal{E}$.

Therefore, there is $\epsilon>0$ such that, if the matrix and vector fields $A$ and $q$ satisfy the general assumptions of the paper, together with
$$\max_{1\le i,j\le N}\|a_{ij}-A_{ij}\|_{C^1(\R^N)}+\max_{1\le i\le N}\|q_i\|_{L^\infty(\R^N)}\le\epsilon,$$
then, for all $y\in\R^N$,
\be\label{psisub}
\dv(A(x-y)\nabla\psi)(x)+q(x-y)\cdot\nabla\psi(x)+g(\psi(x))>0\ \hbox{ for all $x\in\mathcal{E}$}.
\ee
We finally claim that, under these conditions and $f(x,s)\ge g(s)$, the invasion property holds for~\eqref{general}. To do so, choose any $\rho>0$ such that
$$\mathcal{E}\subset B_\rho,$$
and any $\theta$ such that
$$\theta\in(\psi(0),b_\eta).$$
In particular, $\theta\in(\varphi(0),b_\eta)\subset(c_\eta,b_\eta)\subset(0,1)$. Consider now any solution $u$ of~\eqref{general} with initial condition $u_0$ such that
$$\theta\,\1_{B_\rho}\le u_0\le 1\ \hbox{ in $\R^N$}.$$
Let also $v$ be the solution of~\eqref{general} with $g$ instead of $f$ and with initial condition $v_0:=\max(\psi,0)$ in $\R^N$. One then has $v_0\le u_0$ in $\R^N$ and it follows from the above properties of $\psi$ and the parabolic maximum principle that
\be\label{inequv}
0\le v(t,x)\le u(t,x)\le1\ \hbox{ for all $t>0$ and $x\in\R^N$}
\ee
and that the function $v$ is increasing with respect to $t>0$. Therefore, from standard parabolic estimates, there is a $C^2(\R^N)$ function $v_\infty:\R^N\to[0,1]$ such that $v(t,\cdot)\to v_\infty$ locally uniformly in $\R^N$ as $t\to+\infty$, and
\be\label{vinfty}
\dv(A(x)\nabla v_\infty)+q(x)\cdot\nabla v_\infty+g(v_\infty)=0\ \hbox{ in $\R^N$},
\ee
together with $v_\infty>v_0=\max(\psi,0)$ in $\R^N$. Since $v_0$ is compactly supported, there is $r_0>0$ such that $v_\infty>v_0(\cdot+\xi)$ in $\R^N$ for all $\xi\in B_{r_0}$. Let then
$$r^*:=\sup\big\{r>0:v_\infty>v_0(\cdot+\xi)\hbox{ in $B_r$ for all $\xi\in B_r$}\}.$$
One has $0<r_0\le r^*\le+\infty$. Assume by way of contradiction that $r^*<+\infty$. Then there is $y\in\R^N$ such that $v_\infty\ge v_0(\cdot+y)=\max(\psi(\cdot+y),0)$ in $\R^N$ with equality at a point $x^*$. Since $v_\infty>0$ in $\R^N$ and $\psi=a_\eta<0$ in $\R^N\setminus\mathcal{E}$, one has $x^*+y\in\mathcal{E}$, that is, $x^*\in\mathcal{E}-y$, and $v_\infty(x^*)=\psi(x^*+y)$. On the other hand, by~\eqref{psisub}, the function $x\mapsto\tilde{\psi}(x):=\psi(x+y)$ satisfies
$$\dv(A(x)\nabla\tilde{\psi})+q(x)\cdot\nabla\tilde{\psi}+g(\tilde{\psi})>0\ \hbox{ in }\mathcal{E}-y.$$
This, together with $v_\infty\ge\tilde{\psi}$ in $\mathcal{E}-y$, $v_\infty(x^*)=\tilde{\psi}(x^*)$ and~\eqref{vinfty} evaluated at $x^*\in\mathcal{E}-y$, leads to a contradiction. Therefore, $r^*=+\infty$ and $1\ge v_\infty>v_0(\cdot+\xi)$ in $\R^N$ for all $\xi\in\R^N$. In particular,
$$\max(\psi(0),0)=v_0(0)<v_\infty(x)\le1\ \hbox{ for all $x\in\R^N$},$$
whence $\zeta(t)<v_\infty(x)\le1$ for all $t\ge0$ and $x\in\R^N$, where $\zeta$ solves $\zeta'(t)=g(\zeta(t))$ for all $t\ge0$ and $\zeta(0)=v_0(0)=\max(\psi(0),0)$. Since $\psi(0)\in(c_\eta,b_\eta)$ while $g\ge\eta>0$ in $[c_\eta,b_\eta]$ and $g>0$ in $(b_\eta,1)$, one infers that $\zeta(+\infty)=1$ and that $v_\infty(x)=1$ for all $x\in\R^N$. As a conclusion, remembering~\eqref{inequv} and the definition of $v_\infty$, it follows that
$$u(t,x)\to1\ \hbox{ as $t\to+\infty$ locally uniformly in $\R^N$},$$
that is,~\eqref{invading} holds. The proof of Proposition~\ref{proinvasion} is thereby complete.
\end{proof}


\subsection{Proof of Proposition~\ref{pro:Wchar}}\label{sec45}

In order to prove Proposition~$\ref{pro:Wchar}$, we need to show the characterization~\eqref{Wint} for the \ais\ $\W$, as well as the two estimates~\eqref{inner-cone}-\eqref{outer-cone}. Notice indeed that those estimates imply that $\W$ is star-shaped with respect to the origin, and that its boundary~$\partial\W$, if non-empty, is Lipschitz-regular. We start with the proof of~\eqref{Wint}.

\begin{proof}[Proof of~\eqref{Wint}] Define $\W'$ as the right-hand side in~\eqref{Wint}. Assume that $u$ admits an  \ais~$\W=\inter\ol{\W}$. As an immediate consequence of the openness of $\W$ and the first property in~\eqref{ass-cpt} (applied with $C:=\overline{B_r(x)}$ for any $x\in\W$ and $r>0$ such that $\overline{B_r(x)}\subset\W$), we get $\W\subset\W'$. Conversely, the second property in~\eqref{ass-cpt} satisfied by $\W$ yields $\W'\subset\ol{\W}$, and being $\W'$ open, we derive $\W'\subset\inter\ol{\W}$, and the latter set coincides with $\W$ by Definition~\ref{def:W}. Finally, $\W=\W'$.
\end{proof}

The proof of~\eqref{inner-cone}-\eqref{outer-cone} relies on the following result.

\begin{lemma}\label{lem:Wlip}
Under the invasion property~\eqref{invading}, if a solution $u$ to~\eqref{general} admits an \ais\ $\W$, then
\Fi{BinW}
\forall\,\xi\in\W,\ \ \forall\,\lambda\in[0,1),\quad B_{(1-\lambda)\gamma}(\lambda\xi)\subset\W,
\Ff
where $\gamma>0$ is such that \eqref{spreading} holds for the invading solution with initial datum $\theta\1_{B_\rho}$.
\end{lemma}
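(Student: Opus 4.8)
\textbf{Proof plan for Lemma~\ref{lem:Wlip}.} The statement to prove is that, given $\xi\in\W$ and $\lambda\in[0,1)$, the ball $B_{(1-\lambda)\gamma}(\lambda\xi)$ is contained in $\W$. The plan is to exploit the self-similar structure of the spreading and the invariance of the equation with respect to integer shifts, feeding a large ball around a point near $t\xi$ into the invasion property. Fix $\xi\in\W$ and $\lambda\in[0,1)$. Since $\W$ is open and satisfies the first property of~\eqref{ass-cpt}, for any compact neighborhood of $\xi$ contained in $\W$ one has $u(t,t\xi')\to1$ uniformly for $\xi'$ in that neighborhood; in particular $u(t,t\xi)\to1$ as $t\to+\infty$. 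So, given $\theta\in(0,1)$ from Definition~\ref{definvasion}, there is $t_0>0$ such that $u(t_0,t_0\xi)\geq\theta$. Actually one needs slightly more: since $\W$ is open, a whole small ball $B_r(\xi)$ with $r>0$ lies in $\W$, and by uniform convergence on $\overline{B_r(\xi)}$ there is $t_0$ large with $u(t_0,t_0\xi')\geq\theta$ for all $\xi'\in\overline{B_r(\xi)}$; equivalently $u(t_0,\cdot)\geq\theta$ on $\overline{B_{t_0 r}(t_0\xi)}$, a ball whose radius $t_0 r$ tends to infinity as $t_0$ does.

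The second step is to locate this large ball, up to an integer translation, so as to apply the invasion property at a fixed ball $B_\rho$. Choose $\zeta\in\Z^N$ with $|t_0\xi-\zeta|\le\sqrt N/2$, and translate: by periodicity of the coefficients, $v(t,x):=u(t+t_0,x+\zeta)$ solves~\eqref{general}, and $v(0,\cdot)=u(t_0,\cdot+\zeta)\geq\theta$ on $\overline{B_{t_0 r-\sqrt N/2}(t_0\xi-\zeta)}\supset\overline{B_{t_0 r-\sqrt N}(0)}$ once $t_0$ is large. Hence for $t_0$ large enough $v(0,\cdot)\geq\theta$ on $\overline{B_\rho}$, so by the invasion property (Definition~\ref{definvasion}) $v(t,\cdot)\to1$ locally uniformly, and then by the reinforced spreading estimate~\eqref{spreading} applied to $v$ — which holds because $v$ is an invading solution of the periodic equation~\eqref{general} — one gets, with $\gamma>0$ as in the statement, $\min_{\overline{B_{\gamma t}}}v(t,\cdot)\to1$. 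Since actually $v(0,\cdot)\geq\theta$ on a huge ball $\overline{B_{t_0 r-\sqrt N}}$, not merely on $\overline{B_\rho}$, one should instead compare $v(0,\cdot)$ from below with (an integer-shifted copy of) the initial datum $\theta\1_{B_\rho}$ placed at \emph{any} center inside $B_{t_0 r-\sqrt N-\rho}$: for every such center $z_0\in\Z^N$, $v(t,\cdot)\geq \theta$-spreading-solution centered at $z_0$, so $u(t+t_0,x)\ge1-o(1)$ for all $x\in\overline{B_{\gamma t}(z_0+\zeta)}$. Translating back and taking the union over admissible centers, $u(t+t_0,\cdot)\to1$ uniformly on $\overline{B_{\gamma t + t_0 r-\sqrt N-\rho}(t_0\xi)}$ minus boundary effects.

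The third step converts this into the self-similar inclusion~\eqref{BinW}. Rescale: for $s:=t+t_0$ one has $u(s,\cdot)\to1$ uniformly on balls of the form $\overline{B_{\gamma(s-t_0)+t_0 r-\sqrt N-\rho}(t_0\xi)}$. Divide by $s$: the set $\{x: u(s,sx)\to1\}$ contains, in the limit $s\to+\infty$ (hence $t\to+\infty$ with $t_0$ fixed), the ball centered at $(t_0/s)\xi\to0\cdot\xi$... — this is too crude, since $t_0$ is fixed while $s\to\infty$, the center collapses to the origin. To get a center at $\lambda\xi$ one must instead let $t_0$ grow proportionally to the running time: more precisely, one wants, for large $T$, the ball $\overline{B_{\gamma(T-t_0)+\text{(large)}}(t_0\xi)}$ rescaled by $T$, with $t_0/T\to\lambda$. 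Choosing $t_0=t_0(T)$ with $t_0(T)/T\to\lambda$ and $t_0(T)\to\infty$ (possible since $\lambda<1$ allows $T-t_0(T)\to\infty$ too), and using that the radius is at least $\gamma(T-t_0(T))$, after dividing by $T$ the limiting ball is $B_{(1-\lambda)\gamma}(\lambda\xi)$. By~\eqref{Wint}, every point of this open ball lies in $\W$, which is~\eqref{BinW}. The main obstacle is precisely this last bookkeeping: one must simultaneously push $t_0\to\infty$ (to absorb the $\sqrt N$ and $\rho$ losses and to make $u(t_0,t_0\xi)$ close to $1$ on a ball of radius $\gg\rho$) while keeping the ratio $t_0/T$ converging to $\lambda$, and one must be careful that the "radius gain" in~\eqref{spreading} is genuinely linear in the \emph{elapsed} time $T-t_0$, so that $(T-t_0)\gamma/T\to(1-\lambda)\gamma$ survives the rescaling; the openness of $\W$ and the freedom to start from a small ball $B_r(\xi)\subset\W$ (rather than a single point) is what makes the initial datum dominate $\theta\1_{B_\rho}$ after translation.
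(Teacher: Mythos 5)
Your proof is correct and follows essentially the same route as the paper: use the openness of $\W$ and~\eqref{ass-cpt} to get $u\ge\theta$ on a ball of radius $\ge\rho+\sqrt N$ around $\tau\xi$ for all large $\tau$, integer-translate and compare with the spreading solution of initial datum $\theta\,\1_{B_\rho}$ to propagate a ball of radius $\gamma(s-\tau)$ centered near $\tau\xi$, then take $\tau\sim\lambda s$ and rescale, concluding via~\eqref{Wint}. The detour through a union of integer centers is superfluous (a single center near $\tau\xi$ already yields the full ball $B_{(1-\lambda)\gamma}(\lambda\xi)$ after rescaling), but it does no harm.
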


\begin{proof}
We have proved above that the \ais\ $\W$ of~$u$ is given by~\eqref{Wint}. We now prove that such set satisfies \eqref{BinW} with $\gamma>0$ being such that~\eqref{spreading} holds for the invading solution $v$ of~\eqref{general} with initial datum $\theta\1_{B_\rho}$, where $\theta\in(0,1)$ and $\rho>0$ are given by the invasion property.
	
Fix $\xi\in\W$. Being $\W$ open, there exists $r>0$ such that $\ol{B_r(\xi)}\subset\W$. Thus, by Definition \ref{def:W}, there holds
$$\lim_{t\to+\infty}\,\Big(\min_{x\in \ol{B_r(\xi)}}u(t, tx)\Big)=1.$$
In particular, there exists $T>0$ such that 
$$\forall\,\tau\geq T,\ \ \forall\,x\in B_{\rho+\sqrt{N}}(\tau\xi),\qquad u(\tau,x)\geq \theta.$$
For $\tau\geq T$, let $h_\tau\in\Z^N$ be such that $h_\tau-\tau\xi\in[0,1]^N$, whence $u(\tau,\.+h_\tau)\geq \theta\1_{B_{\rho}}$. Thanks to the invariance of the equation by temporal translations and by spatial translations by~$\Z^N$, the function $u(\.+\tau+,\.+h_\tau)$ is still a solution to~\eqref{general} and thus, since the solution~$v$ of~\eqref{general} with initial datum $\theta\1_{B_\rho}$ satisfies~\eqref{spreading} for some $\gamma>0$, we infer by comparison~that 
$$\forall\,\tau\geq T,\qquad\min_{x\in\overline{B_{\gamma t}}} u(t+\tau,x+h_\tau)\geq\min_{x\in\overline{B_{\gamma t}}} v(t,x)\to1 \as t\to+\infty.$$

We rewrite the above property as follows:
\Fi{tauT}
\min_{y\in\overline{B_{\gamma(s-\tau)}(h_\tau)}} u(s,y)\to1 \as s-\tau\to+\infty\hbox{ with }\tau\ge T.
\Ff
For fixed $\tau\ge T$ and $x\in B_\gamma$, there holds
$$\frac{|sx-h_\tau|}{\gamma(s-\tau)}\to \frac{|x|}\gamma<1 \as s\to+\infty,$$
hence \eqref{tauT} yields
$$B_\gamma\subset \{x\!\in\!\R^N\,:\, {u(s, sx)\to1} \as s\to+\infty\}.$$ 
It then follows from~\eqref{Wint} that $B_\gamma\subset\W$, that is, the inclusion~\eqref{BinW} holds for $\lambda=0$.
	
Fix now $\lambda\in(0,1)$ and $x\in B_{(1-\lambda)\gamma}(\lambda\xi)$. Taking any $s\ge T/\lambda$ and $\tau:=\lambda s\ge T$, we infer
$$\frac{|sx-h_\tau|}{\gamma(s-\tau)}=\frac{|sx-h_{\lambda s}|}{\gamma(1-\lambda) s}\leq\frac{|x-\lambda\xi|}{\gamma(1-\lambda)}+\frac{|\lambda s\xi-h_{\lambda s}|}{\gamma(1-\lambda)s},$$
whence, being $h_{\lambda s}-\lambda s\xi\in[0,1]^N$ bounded,
$$\frac{|sx-h_\tau|}{\gamma(s-\tau)}\to\frac{|x-\lambda\xi|}{\gamma(1-\lambda)}<1\as s\to+\infty.$$
Observe that $s\to+\infty$ is equivalent to $s-\tau=(1-\lambda)s\to+\infty$. As a consequence, we deduce from~\eqref{tauT} that $u(s, sx)\to1$ as $s\to+\infty$. Owing to~\eqref{Wint}, we have thereby shown that $B_{(1-\lambda)\gamma}(\lambda\xi)\subset\W$. The proof of Lemma~\ref{lem:Wlip} is complete.
\end{proof}

\begin{proof}[End of the proof Proposition~$\ref{pro:Wchar}$]
It remains to show~\eqref{inner-cone}-\eqref{outer-cone}. We assume that the invasion property holds. Let ${z}\in\partial\W$. Firstly, for any $\lambda\in[0,1)$ and any point $x\in B_{(1-\lambda)\gamma}(\lambda{z})$, we can find $\xi\in\W$ such that $x\in B_{(1-\lambda)\gamma}(\lambda\xi)$, thus $x\in\W$ by Lemma~\ref{lem:Wlip}. This is the inclusion \eqref{inner-cone}.

For the other inclusion, consider again ${z}\in\partial\W$, but this time $\lambda>1$. Assume by contradiction that there exists $\xi\in B_{(\lambda-1)\gamma}(\lambda{z})\cap\W$. We see that
$$|{z}-\frac1\lambda\xi|=\frac1\lambda|\lambda{z}-\xi|<\frac{(\lambda-1)\gamma}\lambda=\Big(1-\frac1\lambda\Big)\gamma.$$
It then follows from Lemma \ref{lem:Wlip} that ${z}\in\W$: a contradiction with the fact that $\W$ is~open. This is the inclusion~\eqref{outer-cone}.
\end{proof}


\subsection{Proof of Proposition~\ref{pro:level}}\label{sec46}

Consider a solution $u$ to~\eqref{general} with a compactly supported initial condition $u_0:\R^N\to[0,1]$, and assume that $u$ has an \ais\ $\W$. Let
$$L:=\sup_{(x,s)\in\R^N\times(0,1]}\frac{f(x,s)}{s}.$$
From the standing assumptions on $f$, $L$ is a real number. From the maximum principle, there holds, for all $t>0$ and $x\in\R^N$,
$$0\le u(t,x)\le e^{Lt}v(t,x),$$
where $v$ is the solution of the equation
$$\partial_t v=\dv(A(x)\nabla v)+q(x)\.\nabla v,\quad t>0,\ x\in\R^N,$$
with initial condition $u_0$. In other words,
$$v(t,x)=\int_{\R^N}p(t,x,y)\,u_0(y)\,dy,$$
where $p$ is the fundamental solution associated to the above equation. From~\cite{Fr}, there is a constant $\gamma>0$ large enough such that the kernel $p$ satisfies the Gaussian estimates
$$\forall\,t>0,\ \forall\,(x,y)\in\R^N\times\R^N,\ \ \ 0\le p(t,x,y)\le \frac{\gamma\,e^{-|x-y|^2/(\gamma t)}}{t^{N/2}}.$$
Since $u_0:\R^N\to[0,1]$ is compactly supported, there is then a constant $\Gamma>0$ such that
\be\label{convGamma}
\sup_{|x|\ge\Gamma t}u(t,x)\to0\ \hbox{ as }t\to+\infty.
\ee

Remember that the \ais\ $\W$ is given by Definition~\ref{def:W}. Assume in this paragraph that $\W=\emptyset$. Then $\max_{|x|\le\Gamma t}u(t,x)\to0$ as $t\to+\infty$ by choosing $C:=\overline{B_\Gamma}$ in the second assertion of~\eqref{ass-cpt}. Therefore, together with~\eqref{convGamma}, $\|u(t,\cdot)\|_{L^\infty(\R^N)}\to0$ as $t\to+\infty$, and, for any $\lambda\in(0,1)$, the upper level sets $E_\lambda(t)=\{x\in\R^N:u(t,x)>\lambda\}$ are empty for all $t>0$ large enough, whence~\eqref{ElambdaW} holds trivially in this case.

Assume in the rest of the proof that $\W\neq\emptyset$, and observe that $\W\subset B_\Gamma$ by Definition~\ref{def:W} and~\eqref{convGamma}. Consider any $\lambda\in(0,1)$. Let first $r>0$ be arbitrary and denote
$$\left\{\baa{rcl}
\mathcal{I}_r & := & \big\{x\in\R^N:d(x,\R^N\setminus\W)\ge r\big\},\vspace{3pt}\\
\mathcal{E}_r & := & \big\{x\in\R^N:d(x,\W)\ge r\big\},\eaa\right.$$
where $d(x,A):=\inf_{y\in A}|x-y|$ for any $A\subset\R^N$. Notice that the ``exterior" set $\mathcal{E}_r$ is not empty since~$\W$ is bounded, and that the ``interior" set $\mathcal{I}_r$ is not empty provided that $r>0$ is small enough, since~$\W$ is open and not empty. On the one hand, the compact set $\mathcal{I}_r$ is included in $\W$ and, if it is not empty, there holds $\min_{x\in t\mathcal{I}_r}u(t,x)\to1$ as $t\to+\infty$ by~\eqref{ass-cpt}, whence $\mathcal{I}_r\subset t^{-1}E_\lambda(t)$ for all $t$ large enough. On the other hand, the compact set $\mathcal{E}_r\cap\overline{B_\Gamma}$ is included in $\R^N\setminus\overline{\W}$ and, if it is not empty, there holds $\max_{x\in t(\mathcal{E}_r\cap\overline{B_\Gamma})}u(t,x)\to0$ as $t\to+\infty$ by~\eqref{ass-cpt}. Together with~\eqref{convGamma}, this implies that $\sup_{x\in t\mathcal{E}_r}u(t,x)\to0$ as $t\to+\infty$, whence $t^{-1}E_\lambda(t)\subset\R^N\setminus\mathcal{E}_r$ for all $t$ large enough. To sum up,
\be\label{ErFr}
\mathcal{I}_r\subset t^{-1}E_\lambda(t)\subset\R^N\setminus\mathcal{E}_r
\ee
for all $t$ large enough. 

Consider finally any $\epsilon>0$. By taking $r\in(0,\epsilon]$ small enough such that $\mathcal{I}_r\neq\emptyset$, it follows from the first inclusion of~\eqref{ErFr} that $t^{-1}E_\lambda(t)\neq\emptyset$ for all $t$ large enough, and from the second one that
\be\label{hausdorff1}
\sup_{x\in t^{-1}E_\lambda(t)}d(x,\W)\le r\le\epsilon
\ee
for all $t$ large enough. For any $x\in\overline{\W}$, there exists $y\in\W$ such that $|y-x|<\epsilon$, and thus, since $y\in\mathcal{I}_r$ for some~$r>0$ (depending on $y$) small enough, we deduce that 
\[\overline{\W}\subset\bigcup_{r>0}(\mathcal{I}_r+B_\epsilon).\]
Being $\overline{\W}$ compact, we can extract a finite covering $\mathcal{I}_{r_1}+B_\epsilon,\dots,\mathcal{I}_{r_p}+B_\epsilon$ of $\overline{\W}$. Since by~\eqref{ErFr} each one of the $\mathcal{I}_{r_j}$ is contained in $t^{-1}E_\lambda(t)$ for $t$ larger than some $T_j>0$, we infer that $\overline{\W}\subset (t^{-1}E_\lambda(t)+B_\epsilon)$ for $t$ larger than $T:=\max\{T_1,\dots,T_p\}$, whence
\be\label{hausdorff2}
\sup_{x\in\W}d(x,t^{-1}E_\lambda(t))\leq\epsilon
\ee
for $t$ larger than $T$. By putting together~\eqref{hausdorff1} and~\eqref{hausdorff2}, it follows from the arbitrariness of $\epsilon>0$ that the sets $t^{-1}E_\lambda(t)$ converge to $\W$ as $t\to+\infty$ in the sense of the Hausdorff distance. The proof of Proposition~\ref{pro:level} is thereby complete.\hfill$\Box$



\end{document}